\numberwithin{equation}{section}
\newcommand{\N}{\mathbb{N}}
\newcommand{\R}{\mathbb{R}}
\newcommand{\E}{\mathbb{E}}
\newcommand{\Prob}{\mathbb{P}}
\newcommand{\G}{\mathbb{G}}
\DeclareMathOperator*{\argmin}{argmin}
\newcommand{\pnorm}[2]{\lVert#1\rVert_{#2}}
\newcommand{\abs}[1]{\lvert#1\rvert}
\newcommand{\bigabs}[1]{\big\lvert#1\big\rvert}
\newcommand{\biggabs}[1]{\bigg\lvert#1\bigg\rvert}
\renewcommand{\epsilon}{\varepsilon}
\renewcommand{\d}[1]{\mathrm{d}#1}
\newcommand{\floor}[1]{\left\lfloor #1 \right\rfloor}
\theoremstyle{definition}\newtheorem{definition}{Definition}
\theoremstyle{remark}
\theoremstyle{remark}\newtheorem{remark}{Remark}
\theoremstyle{definition}\newtheorem{example}{Example}
\theoremstyle{plain}\newtheorem{question}{Question}
\theoremstyle{plain}\newtheorem{theorem}{Theorem}
\theoremstyle{plain}\newtheorem{lemma}{Lemma}
\theoremstyle{plain}\newtheorem{proposition}{Proposition}
\theoremstyle{plain}
\theoremstyle{plain}
\newcounter{tmp}
\begin{document}

\title[Robustness of shape-restricted LSEs]{Robustness of shape-restricted regression estimators: an envelope perspective}
\thanks{Supported in part by NSF Grant DMS-1566514, NI-AID grant R01 AI029168, and by Isaac Newton Institute for Mathematical Sciences, program \emph{Statistical Scalability}, EPSRC Grant Number LNAG/036 RG91310.}

\author[Q. Han]{Qiyang Han}

\address[Q. Han]{
Department of Statistics, Box 354322, University of Washington, Seattle, WA 98195-4322, USA.
}
\email{royhan@uw.edu}

\author[J. A. Wellner]{Jon A. Wellner}

\address[J. A. Wellner]{
Department of Statistics, Box 354322, University of Washington, Seattle, WA 98195-4322, USA.
}
\email{jaw@stat.washington.edu}

\date{\today}

\keywords{robustness, shape-restricted regression, additive model, oracle inequality, localized envelope}
\subjclass[2000]{60F17, 62E17}
\maketitle

\begin{abstract}
Classical least squares estimators are well-known to be robust with respect to moment assumptions 
concerning the error distribution in a wide variety of finite-dimensional statistical problems;  
generally only a second moment assumption is required for least squares estimators to maintain
the same rate of convergence that they would satisfy if the errors were assumed to be Gaussian.
In this paper, we give a geometric characterization of the robustness of shape-restricted 
least squares estimators (LSEs)  to error distributions with an 
$L_{2,1}$ moment, in terms of the `localized envelopes' of the model.

This envelope perspective gives a systematic approach to proving oracle inequalities 
for the LSEs in shape-restricted regression problems in the random design setting, 
under a minimal $L_{2,1}$ moment assumption on the errors. The canonical isotonic 
and convex regression models, and a more challenging additive regression model 
with shape constraints are studied in detail. Strikingly enough, in the additive model 
both the adaptation and robustness properties of the LSE can be preserved, up to 
error distributions with an $L_{2,1}$ moment, for estimating the shape-constrained 
proxy of the marginal $L_2$ projection of the true regression function. This holds 
essentially regardless of whether or not the additive model structure is correctly specified.

The new envelope perspective goes beyond shape constrained models. 
Indeed, at a general level, the localized envelopes give a sharp characterization 
of the convergence rate of the $L_2$ loss of the LSE between the worst-case 
rate as suggested by the recent work of the authors \cite{han2017sharp}, and the 
best possible parametric rate. 

\end{abstract}


\section{Introduction}

\subsection{Overview}\footnote{See Section \ref{section:notation} for notation.} 
Suppose we observe $(X_1,Y_1),\ldots,(X_n,Y_n)$ from the regression model
\begin{align}\label{regression_model}
Y_i = f_0(X_i)+\xi_i,\quad 1\leq i\leq n.
\end{align}
where the $X_i$'s are independent and identically distributed $\mathcal{X}$-valued 
covariates with law $P$, and the $\xi_i$'s are mean-zero errors independent of $X_i$'s. 
The goal is to recover the true signal $f_0$ based on the observed data $\{(X_i,Y_i)\}_{i=1}^n$.

In the canonical setting where the errors $\xi_i$'s are Gaussian, perhaps the simplest 
estimation procedure for the regression model (\ref{regression_model}) is the  
\emph{least squares estimator} (LSE) $\hat{f}_n$ defined by
\begin{align}\label{lse}
\hat{f}_n \in \argmin_{f \in \mathcal{F}} \sum_{i=1}^n (Y_i-f(X_i))^2,
\end{align}
where $\mathcal{F}$ is a model chosen by the user. The use of the LSE in the Gaussian 
regression model has been theoretically justified in the 1990s and the early 2000s, cf.  \cite{barlett2005local,bartlett2006empirical,birge1993rates,koltchinskii2006local,
koltchinskii2000rademacher,massart2006risk,van1990estimating,van2000empirical,van1996weak}: 

\begingroup
\setcounter{tmp}{\value{theorem}}
\setcounter{theorem}{0} 
\renewcommand\thetheorem{\Alph{theorem}}
\begin{theorem}\label{thm:gaussian_rate}
Suppose that:
\begin{enumerate}
	\item[(E)] the errors $\{\xi_i\}$ are sub-Gaussian (or at least sub-exponential);
	\item[(F)] the model $\mathcal{F}$ satisfies an entropy condition with exponent $ \alpha \in (0,2)$\footnote{$\mathcal{F}$ satisfies an entropy condition with exponent $ \alpha \in (0,2)$ if either (i) $
		\sup_Q \log \mathcal{N}(\epsilon \| F \|_{L_2 (Q)}, \mathcal{F} , L_2 (Q) ) \lesssim \epsilon^{-\alpha}$,
		where the supremum is over all finitely discrete measures $Q$ on $(\mathcal{X},\mathcal{A})$; or (ii) $
		\log \mathcal{N}_{[\, ]} (\epsilon , \mathcal{F}, L_2 (P) ) \lesssim \epsilon^{-\alpha}$.}.
\end{enumerate}
Then
\begin{align}\label{rate_gaussian_error}
\pnorm{\hat{f}_n-f_0}{L_2(P)}=\mathcal{O}_{\mathbf{P}}\big(n^{-\frac{1}{2+\alpha}}\big).
\end{align}
\end{theorem}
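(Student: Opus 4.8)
The statement is the classical ``Gaussian rate'' result, and the plan is the by-now standard route: \emph{basic inequality} $+$ \emph{localized multiplier process} $+$ \emph{peeling}. Throughout assume the well-specified case $f_0\in\mathcal F$ (otherwise an approximation term $\dist{f_0}{\mathcal F}$ enters, which the paper's oracle inequalities will handle), and write $\pnorm{g}{n}^2:=n^{-1}\sum_{i=1}^n g^2(X_i)$. Since $\hat f_n$ minimizes $f\mapsto\sum_i(Y_i-f(X_i))^2$ over $\mathcal F$ and $f_0\in\mathcal F$, substituting $Y_i=f_0(X_i)+\xi_i$ and expanding gives the basic inequality
\[
\pnorm{\hat f_n-f_0}{n}^2\ \le\ \frac{2}{n}\sum_{i=1}^n \xi_i\,(\hat f_n-f_0)(X_i).
\]
Everything then reduces to controlling the multiplier empirical process $f\mapsto n^{-1}\sum_i\xi_i (f-f_0)(X_i)$ \emph{localized} to $L_2(P)$-balls around $f_0$, together with an empirical-vs-population norm comparison.

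The core estimate is a bound on the localized modulus
\[
\phi_n(\delta)\ :=\ \E\,\sup_{f\in\mathcal F,\ \pnorm{f-f_0}{L_2(P)}\le\delta}\ \Big|\frac1n\sum_{i=1}^n \xi_i\,(f-f_0)(X_i)\Big|.
\]
Under condition (F)(ii) I would invoke a maximal inequality in terms of the bracketing integral (van de Geer-type): since $\int_0^\delta\sqrt{\log\mathcal N_{[\,]}(\epsilon,\mathcal F,L_2(P))}\,\d\epsilon\lesssim\int_0^\delta\epsilon^{-\alpha/2}\,\d\epsilon\asymp\delta^{1-\alpha/2}$ (finite precisely because $\alpha<2$), one obtains $\phi_n(\delta)\lesssim n^{-1/2}\delta^{1-\alpha/2}$ up to a lower-order term. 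Under condition (F)(i) I would first symmetrize, then use a multiplier inequality (contraction, or a sharper Han--Wellner-type multiplier inequality) to pass from the $\xi_i$-weights to Rademacher weights, and bound the resulting local Rademacher complexity by Dudley's entropy integral with the uniform-entropy exponent, again getting $\phi_n(\delta)\lesssim n^{-1/2}\delta^{1-\alpha/2}$. The sub-exponential (rather than sub-Gaussian) case is absorbed either by truncating the $\xi_i$ at level $\sim\log n$ and bounding the remainder, or via the form of the multiplier inequality that only uses a moment of $\max_{i\le n}\abs{\xi_i}$; this is the single point where the two-sided tail behavior of the errors must be tracked.

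The last step is a peeling (slicing) argument. Cover $\{f\in\mathcal F:\pnorm{f-f_0}{L_2(P)}>\delta_n\}$ by shells $S_j=\{f:2^{j-1}\delta_n<\pnorm{f-f_0}{L_2(P)}\le 2^j\delta_n\}$, $j\ge1$. On $\{\hat f_n\in S_j\}$, the basic inequality (after transferring $\pnorm{\cdot}{n}$ to $\pnorm{\cdot}{L_2(P)}$) forces the supremum of the multiplier process over $S_j$ to exceed a fixed multiple of $(2^j\delta_n)^2$; combining $\phi_n(2^j\delta_n)\lesssim n^{-1/2}(2^j\delta_n)^{1-\alpha/2}$ with a concentration inequality for that supremum over each $S_j$ (Talagrand, or bounded differences after truncation) and summing over $j$ shows this event has probability $o(1)$ as soon as $\delta_n^2\gtrsim n^{-1/2}\delta_n^{1-\alpha/2}$, i.e.\ $\delta_n\gtrsim n^{-1/(2+\alpha)}$. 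The required norm equivalence $\pnorm{\hat f_n-f_0}{n}\asymp\pnorm{\hat f_n-f_0}{L_2(P)}$ on this scale follows from the same localized empirical-process machinery (a multiplicative deviation bound for the $\sup$ over $L_2$-balls). Taking $\delta_n\asymp n^{-1/(2+\alpha)}$ then yields \eqref{rate_gaussian_error}.

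I expect the main obstacle to be the concentration/peeling step under only a sub-exponential error assumption: Talagrand's inequality in its cleanest form wants bounded summands, so one must truncate and control the truncation error, or use a multiplier/concentration bound valid under weaker moment control on $\max_{i\le n}\abs{\xi_i}$ --- precisely the difficulty that the localized-envelope analysis of this paper has to confront, in sharper form, under the weaker $L_{2,1}$ moment.
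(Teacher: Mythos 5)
This is a \emph{lettered} background theorem (Theorem~\ref{thm:gaussian_rate}) that the paper does not prove; it is cited to the classical literature (Birg\'e--Massart, van de Geer, Koltchinskii, Bartlett--Bousquet--Mendelson, etc.) as the known Gaussian-rate baseline against which Theorems~\ref{thm:general_rate} and~\ref{thm:envelope_rate_upper_bound} are measured. There is therefore no in-paper proof to compare against, but your sketch is the standard route that those references follow, and it is also the same skeleton that the paper itself uses later for its new results: basic inequality, localized modulus $\phi_n(\delta)$, peeling over dyadic shells, and solving $\phi_n(\delta_n)\asymp\sqrt n\,\delta_n^2$ (see Propositions~\ref{prop:lse} and~\ref{prop:additive_lse}).

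Your outline is sound, and you correctly isolate the two nontrivial points. First, the modulus bound $\phi_n(\delta)\lesssim n^{-1/2}\delta^{1-\alpha/2}$: under (F)(ii) this comes from a bracketing maximal inequality (Ossiander/Birg\'e--Massart), and under (F)(i) from symmetrization plus the Koltchinskii--Pollard uniform-entropy bound (Lemma~\ref{lem:KP_maximal_ineq} in the paper) after passing from $\xi_i$ to Rademacher weights via a multiplier/contraction step; the arithmetic $\delta_n^{1+\alpha/2}\asymp n^{-1/2}$ gives $\delta_n\asymp n^{-1/(2+\alpha)}$. Second, the concentration/peeling step genuinely needs boundedness or light tails: with sub-Gaussian or sub-exponential $\xi$ one either truncates $\xi_i$ at $\mathcal O(\log n)$ and applies Talagrand's inequality (Lemma~\ref{lem:talagrand_conc_ineq}), or uses a moment inequality of the Gin\'e--Lata{\l}a--Zinn type (Lemma~\ref{lem:p_moment_estimate}) which converts expectation bounds to tail bounds at the cost of $\E\max_i\abs{\xi_i}^q$. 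You also correctly flag that one must transfer from $\pnorm{\cdot}{n}$ to $\pnorm{\cdot}{L_2(P)}$; that ratio control follows from the same localized empirical-process bound applied to $\{(f-f_0)^2\}$ after contraction, again assuming a uniform sup-norm bound on the class. None of this is a gap for (E) as stated --- it is precisely the place where, under the paper's weaker $L_{2,1}$ assumption, the sub-Gaussian tools fail and the localized-envelope analysis must take over.
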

\endgroup
\setcounter{theorem}{\thetmp}

Furthermore, the rate (\ref{rate_gaussian_error}) is unimprovable under the entropy 
conditions (F) in a minimax sense, see e.g. \cite{yang1999information}. 

Although the condition (F) is widely accepted in the literature as a complexity 
measurement of the model $\mathcal{F}$, it is far from clear if the light-tailed 
condition on the errors (E) is necessary for the theory. Recently, we showed \cite{han2017sharp} 
that the condition (E) is actually more than a mere technicality: 
\begingroup
\setcounter{tmp}{\value{theorem}}
\setcounter{theorem}{1} 
\renewcommand\thetheorem{\Alph{theorem}}
\begin{theorem}\label{thm:general_rate}
	Suppose that condition (E) in Theorem \ref{thm:gaussian_rate} is replaced by
	\begin{enumerate}
		\item[(E')] the errors $\{\xi_i\}$ have a finite $L_{p,1}$ moment ($p\geq 1$)
	\end{enumerate}
	and (F) holds. Then
	\begin{align}\label{rate_general_error}
	\pnorm{\hat{f}_n-f_0}{L_2(P)}=\mathcal{O}_{\mathbf{P}}\big(n^{-\frac{1}{2+\alpha}}\vee n^{-\frac{1}{2}+\frac{1}{2p}}\big).
	\end{align}
\end{theorem}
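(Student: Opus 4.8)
The plan is to carry out the classical ``basic inequality $+$ peeling'' analysis of the LSE, with the only genuinely new ingredient being a multiplier inequality that stays sharp under an $L_{p,1}$ moment. From the definition of $\hat f_n$ in \eqref{lse} and $Y_i=f_0(X_i)+\xi_i$ one has the basic inequality
\begin{align}\label{pp:basic}
\pnorm{\hat f_n-f_0}{n}^2\le\frac2n\sum_{i=1}^n\xi_i(\hat f_n-f_0)(X_i),
\end{align}
where $\pnorm{\cdot}{n}$ is the empirical $L_2$ norm. It therefore suffices to bound, at each dyadic radius $r$, the local multiplier modulus
\begin{align}\label{pp:Mn}
M_n(r):=\E\sup_{\substack{f\in\mathcal{F}\\ \pnorm{f-f_0}{L_2(P)}\le r}}\biggabs{\frac1n\sum_{i=1}^n\xi_i(f-f_0)(X_i)},
\end{align}
and then to (i) convert $M_n$ into an in-probability statement via Markov's inequality together with a peeling argument over the shells $\{2^jr_n<\pnorm{f-f_0}{L_2(P)}\le 2^{j+1}r_n\}$, taking $r_n$ to be the smallest $r$ with $M_n(r)\lesssim r^2$; and (ii) pass from $\pnorm{\hat f_n-f_0}{n}$ to $\pnorm{\hat f_n-f_0}{L_2(P)}$ using the usual ratio bound that the entropy condition (F) supplies. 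Steps (i)--(ii) are routine and are exactly as in the Gaussian case of Theorem \ref{thm:gaussian_rate}; the content is the estimate of $M_n(r)$.

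For that, the key tool is the following multiplier inequality: if $\xi_1,\dots,\xi_n$ are i.i.d.\ mean-zero and independent of the i.i.d.\ $X_1,\dots,X_n$, then for any class $\mathcal{G}$ of functions of $X$, writing $\xi_{(1)}\ge\cdots\ge\xi_{(n)}$ for the order statistics of $\abs{\xi_1},\dots,\abs{\xi_n}$ and $\psi(k):=\E\bigpnorm{\sum_{i=1}^k\epsilon_ig(X_i)}{\mathcal{G}}$ for the size-$k$ symmetrized modulus ($\epsilon_i$ being Rademacher),
\begin{align}\label{pp:mult}
\E\biggpnorm{\sum_{i=1}^n\xi_ig(X_i)}{\mathcal{G}}\le 2\sum_{k=1}^n\bigl(\E\xi_{(k)}-\E\xi_{(k+1)}\bigr)\,\psi(k).
\end{align}
Inequality \eqref{pp:mult} follows from symmetrization (legitimate since $\E[\xi_ig(X_i)]=0$ for every $g$) and then Abel summation over the sorted multipliers, using that, conditionally on $(\abs{\xi_i})$, the ``top-$k$'' block is distributed as $\sum_{i=1}^k\epsilon_ig(X_i)$. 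The point of \eqref{pp:mult} is that it controls the multiplier process by the symmetrized empirical process \emph{at every sample size $k\le n$}, weighted by the error order statistics; since $\sum_{k\le n}(\E\xi_{(k)}-\E\xi_{(k+1)})\sqrt k\asymp\sum_{k\le n}\E\xi_{(k)}/\sqrt k\asymp\sqrt n\,\pnorm{\xi}{2,1}$, this makes transparent both why an $L_{2,1}$ moment is the critical threshold and, for $1\le p<2$, how the extra small-$k$ terms arise.

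Apply \eqref{pp:mult} with $\mathcal{G}=\mathcal{G}_r:=\{f-f_0:f\in\mathcal{F},\ \pnorm{f-f_0}{L_2(P)}\le r\}$. A localization argument (symmetrization, the contraction principle applied to $t\mapsto t^2$ on the range of $\mathcal{G}_r$, and Dudley's entropy integral, which converges since $\alpha<2$) gives, uniformly over $1\le k\le n$,
\begin{align}\label{pp:psi}
\psi(k)\lesssim\sqrt k\,r^{1-\alpha/2}+D^{\frac{2-\alpha}{2+\alpha}}\,k^{\frac{\alpha}{2+\alpha}},
\end{align}
where $D$ is a constant controlled by the envelope $F$ of $\mathcal{F}$ (essentially $\pnorm{F\vee f_0}{\infty}$, or, after a mild truncation, its $L_2(P)$ analogue): the first term is the ``localized'' contribution, valid once $k$ is large enough that $\sup_{g\in\mathcal{G}_r}\Prob_n g^2\asymp r^2$, and the second a crude bound that governs the small-$k$ range. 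Feeding \eqref{pp:psi} into \eqref{pp:mult} and dividing by $n$, the first term contributes $\lesssim n^{-\frac12}r^{1-\alpha/2}\pnorm{\xi}{2,1}$ (the Gaussian modulus), while the second, after Abel summation back and the bound $\E\xi_{(k)}\lesssim(n/k)^{1/p}\pnorm{\xi}{p,1}$, contributes an $r$-\emph{independent} quantity $\lesssim n^{\frac1p-1}\pnorm{\xi}{p,1}D^{\frac{2-\alpha}{2+\alpha}}\sum_{k\le n}k^{\frac{\alpha}{2+\alpha}-1-\frac1p}$; when $p<1+2/\alpha$ this sum is $\mathcal{O}(1)$, so the bound is of order $n^{\frac1p-1}$, and when $p\ge 1+2/\alpha$ the sum is $\mathcal{O}(n^{\frac{\alpha}{2+\alpha}-\frac1p}\vee\log n)$, so the bound is of order $n^{-\frac{2}{2+\alpha}}$ (up to a logarithm at the boundary $p=1+2/\alpha$). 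Hence $M_n(r)\lesssim n^{-\frac12}r^{1-\alpha/2}+\bigl(n^{\frac1p-1}\vee n^{-\frac{2}{2+\alpha}}\bigr)$, and the fixed point $M_n(r_n)\lesssim r_n^2$ yields $r_n\asymp n^{-\frac{1}{2+\alpha}}\vee n^{-\frac{1}{2}+\frac{1}{2p}}$, which is \eqref{rate_general_error}.

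The main obstacle is pairing \eqref{pp:mult} with a localization bound such as \eqref{pp:psi} that stays valid \emph{all the way down to $k=1$}: at small sample sizes the population ball $\{\pnorm{f-f_0}{L_2(P)}\le r\}$ sits only inside an empirical-$L_2$ ball of radius $\asymp D$, so there $\psi(k)$ is driven by the (for this worst-case bound, global) envelope rather than by $r$, and it is exactly these ``heavy-tail $\times$ large-envelope'' contributions that create the floor $n^{-\frac{1}{2}+\frac{1}{2p}}$ below which the LSE cannot be localized. Extracting from the order-statistic sum \emph{precisely} this rate, with the correct dichotomy at $p=1+2/\alpha$, is the delicate bookkeeping; and it is exactly the crude step ``$\psi(k)$ at small $k$ is controlled by the global envelope $D$'' that the remainder of the paper refines, since replacing $D$ by the genuine decay of the \emph{localized} envelope $F_r$ interpolates between \eqref{rate_general_error} and the parametric rate $n^{-1/2}$.
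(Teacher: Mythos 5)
Theorem~\ref{thm:general_rate} is quoted from the authors' earlier paper \cite{han2017sharp} and is not re-proved here, but the tools this paper imports from that reference --- the multiplier inequality (your \eqref{pp:mult}, which is exactly Proposition~1/Theorem~1 of \cite{han2017sharp}, restated as Lemmas~\ref{lem:interpolation_inequality} and~\ref{lem:multiplier_ineq}), combined with a localized empirical-process bound of the form \eqref{pp:psi} (whose ``crude small-$k$'' term $k^{\alpha/(2+\alpha)}$ is precisely the envelope-saturated fixed point) and a peeling/fixed-point argument --- are exactly the route your proposal follows. Your proof is essentially the same as the one in \cite{han2017sharp}, and your closing observation that the $n^{-1/2+1/(2p)}$ floor comes from the small-$k$, envelope-driven part of the order-statistic sum is the correct diagnosis of where the rate deteriorates and of what the localized-envelope refinement in the rest of the paper sharpens.
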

\endgroup
\setcounter{theorem}{\thetmp}
We also showed \cite{han2017sharp} that the rate (\ref{rate_general_error}) cannot be improved under (F) alone. 
Comparing with (\ref{rate_gaussian_error}), the rate in (\ref{rate_general_error}) clearly indicates that if the 
model $\mathcal{F}$ only satisfies (F), the best possible moment condition on the errors to guarantee the 
same rate of convergence of the LSE as in the case of Gaussian errors is $p\geq 1+2/\alpha$.

The starting point for this paper originates from a remarkable result due to Cun-Hui Zhang \cite{zhang2002risk} 
in the context of isotonic regression. Zhang \cite{zhang2002risk} showed that the $L_2$ loss of the isotonic 
LSE achieves the usual worst-case (minimax)
 $\mathcal{O}_{\mathbf{P}}(n^{-1/3})$ rate, and the adaptive rate 
$\mathcal{O}_{\mathbf{P}}(\sqrt{\log n/n})$ if the true signal is, say, $f_0$ equals a constant, 
under only a second moment 
assumption on the errors.  

We view the first of these two properties established by Zhang 
as a ``robustness property'' of the LSE with respect to the distribution of the errors $\{ \xi_i \}$. 
We formalize this with the following definition:

\begin{definition} 
\label{defn:Robustness}
We will say that the estimator sequence $\{ \hat{f}_n \}$ is \emph{$L_2$-robust with respect to the errors 
$\{\xi_i$\} in the model ${\mathcal F}$} (or just $L_2$-robust), if $\hat{f}_n$ converges to $f_0$ in $L_2(P)$ at the same rate 
for zero mean $0$ errors with $ \| \xi_i \|_2 < \infty$ as for errors $\{ \xi_i \}$ that are Gaussian or sub-Gaussian.
Similarly, if the same rate holds for zero mean errors with $\| \xi_i \|_{2,1} < \infty$, we say that 
$\{ \hat{f}_n \}$ is \emph{$L_{2,1}$-robust with respect to the errors  $\{\xi_i$\} in the model ${\mathcal F}$.}
\end{definition}

Similarly, we view the second of the two properties established by Zhang as an ``adaptation property'' of the 
LSE with respect to the model ${\mathcal F}$:

\begin{definition}
\label{defn:Adaptation} 
We will say that the estimator sequence $\{ \hat{f}_n\}$ is  \emph{adaptive to a subset ${\mathcal G}_m$
of the model ${\mathcal F}$} if it achieves a nearly (up to factors of $\log n$) parametric rate of convergence at
all points $f \in {\mathcal G}_m$.
\end{definition}

For the shape-constrained models we consider here the subsets ${\mathcal G}_m $ of ${\mathcal F}$ are natural subclasses 
of extreme points of the class ${\mathcal F}$:  in the isotonic model ${\mathcal F}$ the collections ${\mathcal G}_m$
consisting of $m$ constant non-decreasing pieces, and in the convex regression model ${\mathcal G}_m$ 
can be taken to be the piecewise linear (convex) functions with at most $m$ linear pieces.

Zhang's work \cite{zhang2002risk} has generated intensive research interest in further 
understanding the adaptation properties of the isotonic and other shape-restricted LSEs in 
recent years, cf. 
\cite{bellec2018sharp,chatterjee2015risk,chatterjee2015adaptive,guntuboyina2013global,guntuboyina2017nonparametric}. 
These papers share a common theme: the shape-restricted LSEs 
are adaptive to certain subsets $\{\mathcal{G}_m\}$ of the model ${\mathcal F}$ 
under a (sub-)gaussian assumption on the distribution of the errors in the regression model. 

Despite substantial progress in the adaptation properties of various shape-restricted LSEs, 
there remains little progress in further understanding their $L_2$-robustness properties beyond the 
isotonic model studied by Zhang \cite{zhang2002risk}. Indeed, the challenges involved 
here were noted in Guntuboyina and Sen \cite{guntuboyina2017nonparametric} (page 30) as follows:
``......\emph{However the existing proof techniques for these risk bounds strongly rely on the 
assumption of sub-Gaussianity. It will be very interesting to prove risk bounds in these problems 
without Gaussianity. We believe that new techniques will need to be developed for this}''. 
One of the goals of this paper is to provide new approaches and insights concerning  the $L_2$
(or $L_{2,1}$)-robustness of various shape-restricted LSEs.

Initially we had hoped to study this problem by appealing to the general 
Theorem \ref{thm:general_rate}. However, the theory in Theorem \ref{thm:general_rate} requires at least a 
third moment (note that here $\alpha=1$ for the isotonic model). This implies that the isotonic shape constraint 
must contain more information than that provided by the entropic structure alone, 
so that Theorem \ref{thm:general_rate} fails to 
fully capture the $L_2$-robustness of the isotonic LSE. 

One particular useful feature of the isotonic model is an explicit min-max formula for 
the isotonic LSE in terms of partial sum processes; see e.g. \cite{robertson1988order}. 
Zhang's techniques \cite{zhang2002risk} make full use of the min-max representation, 
and are therefore substantially of an analytic flavor. Similar techniques have also been 
used in \cite{chatterjee2015risk,gao2017minimax}, but have apparently not yet successful
 in dealing with any other shape constrained models. The rigidity in this analytic approach 
 naturally motivates the search for other `softer' properties of the isotonic shape constrained 
 model that explain the robustness of the LSE. These considerations lead to the following question.

\begin{question}\label{question:geometric_feature_fcn_class}
What geometric aspects of the isotonic shape constrained model give rise to the 
$L_2$(or $L_{2,1}$)-robustness property of the LSE?
\end{question}

To put this question into a more general setting, note that Theorem \ref{thm:general_rate} 
implies that the LSE can converge as slowly as $\mathcal{O}_{\mathbf{P}}(n^{-1/4})$ 
for certain hard models when the errors only have a second moment, 
while in the aforementioned isotonic regression case, it is possible that the LSE converges 
at a nearly parametric rate $\mathcal{O}_{\mathbf{P}}(\sqrt{\log n/n})$ 
for certain special isotonic functions. Therefore it seems more promising to search for a characterization of the convergence 
rate of the $L_2$ loss of the LSE in terms of some geometric feature of the model 
$\mathcal{F}$, when the errors have only an $L_2$(or $L_{2,1}$) moment.

The first main contribution of this paper is to shed light on Question \ref{question:geometric_feature_fcn_class} 
from an `envelope' perspective at this general level. Roughly speaking, the size of the `localized envelopes' 
of the model $\mathcal{F}$ determines the convergence rate of the $L_2$ loss of the LSE when the 
errors only have an $L_{2,1}$ moment. More specifically, let $F_0(\delta)$ be the envelope for 
$\mathcal{F}_0(\delta)\equiv \{f \in \mathcal{F}_0: Pf^2\leq \delta^2\}$ where $\mathcal{F}_0\equiv \mathcal{F}-f_0$. 
We show that (cf. Theorem \ref{thm:envelope_rate_upper_bound}), under a certain 
uniform entropy condition on the function class, if for some $0\leq \gamma\leq 1$, the localized envelopes have the growth rate
\begin{align}\label{cond:size_envelope_intro}
\pnorm{F_0(\delta)}{L_2(P)} \sim \delta^\gamma:
\end{align}
then the convergence rate of the LSE in the $L_2$ loss is no worse than 
\begin{align}\label{rate:envelope_size}
\mathcal{O}_{\mathbf{P}}\big(n^{-\frac{1}{2(2-\gamma)}}\big).
\end{align}
Furthermore, the rate (\ref{rate:envelope_size}) cannot be improved under the 
condition (\ref{cond:size_envelope_intro}), cf. Theorem \ref{thm:envelope_rate_lower_bound}. 
It is easily seen  from (\ref{rate:envelope_size}) that, as the size of the 
localized envelopes increases, the rate of the $L_2$ loss of the LSE deteriorates 
from the parametric rate $\mathcal{O}_{\mathbf{P}}(n^{-1/2})$ to the worst-case rate 
$\mathcal{O}_{\mathbf{P}}(n^{-1/4})$ as suggested by Theorem \ref{thm:general_rate}. 
For isotonic regression, we will see that the localized envelopes of the model are 
small in the sense that $\gamma \approx 1$ (up to logarithmic factors) when $f_0=0$, 
and hence the LSE converges at a nearly parametric rate under an $L_{2,1}$ moment 
assumption on the errors. For the hard models identified in \cite{han2017sharp} 
(cf. Example \ref{ex:hard_regression} below), the localized envelopes are big in the 
sense that $\gamma=0$ so the LSE can only converge at the worst-case rate.

Addressing Question \ref{question:geometric_feature_fcn_class} from a geometric 
point of view is not only of interest in its own right, but also serves as an important step in 
better understanding the robustness properties of other shape constrained models. 
This is the context of the second main contribution of this paper: we aim at improving our understanding of the $L_{2,1}$-robustness property of shape restricted LSEs, 
by providing a systematic approach to proving oracle inequalities in the random design 
regression setting for these LSEs under an $L_{2,1}$ moment condition on the errors. This goal is achieved by exploiting the idea of small envelopes from the solution to 
Question \ref{question:geometric_feature_fcn_class}. The formulation of the oracle inequality 
follows its fixed-design counterparts that highlight the automatic rate-adaptive behavior 
of the LSE, cf. \cite{bellec2018sharp,chatterjee2015risk}. More specifically, we first 
prove the following oracle inequality that holds for the canonical isotonic and convex 
LSEs in the simple regression models (cf. Theorem \ref{thm:isotonic_reg}): 
Suppose that $\pnorm{f_0}{\infty}<\infty$ and the errors $\{\xi_i\}$ are i.i.d. mean-zero 
with $\pnorm{\xi_1}{2,1}<\infty$. Then for any $\delta \in (0,1)$, there exists some 
constant $c>0$ such that with probability $1-\delta$,
\begin{align}\label{oracle_ineq_intro}
\pnorm{\hat{f}_n-f_0^\ast}{L_2(P)}^2 \leq c \inf_{m \in \N}  
\left(\inf_{f_m \in \mathcal{G}_m}\pnorm{f_m-f_0^\ast}{L_2(P)}^2+\frac{m}{n}\cdot \log^{2} n\right),
\end{align}
where $f_0^\ast$ is the $L_2(P)$-projection of $f_0$ onto the space of square 
ntegrable monotonic non-decreasing (resp. convex) functions, and $\mathcal{G}_m$ 
is the class of piecewise constant non-decreasing (resp. linear convex) functions on 
$[0,1]$ with at most $m$ pieces in the isotonic (resp. convex) model. 
The oracle inequality (\ref{oracle_ineq_intro}) is further verified for the shape-restricted 
LSEs in the additive model (cf. Theorem \ref{thm:additive_isotonic_reg}), where now 
$f_0$ is the marginal $L_2$ projection of the true regression function.
One striking message of the oracle inequality for the shape-restricted LSEs in the 
additive model is the following: both the adaptation and $L_{2,1}$-robustness properties 
of the LSE can be preserved, up to error distributions with an $L_{2,1}$ moment, for 
estimating the shape-constrained proxy of the marginal $L_2$ projection of the true 
regression function, \emph{essentially regardless of whether or not the additive structure is correctly specified}.

The proofs in this paper rely heavily on the new empirical process tools and proof 
techniques developed in \cite{han2017sharp}. Although we will list relevant results, 
readers are referred to \cite{han2017sharp} for more discussion of the new tools. 
Along the way we also resolve the stochastic boundedness issue of convexity 
shape-restricted LSEs at the boundary, which may be of independent interest 
(this problem is in fact an open problem in the field, cf. \cite{guntuboyina2017nonparametric}).

\subsection{Notation}\label{section:notation}
For a real-valued random variable $\xi$ and $1\leq p<\infty$, let $\pnorm{\xi}{p} := \big(\E\abs{\xi}^p\big)^{1/p} $ 
denote the ordinary $p$-norm. The $L_{p,1}$ norm for a random variable $\xi$ is defined by 
\begin{align*}
\pnorm{\xi}{p,1}:=\int_0^\infty {\Prob(\abs{\xi}>t)}^{1/p}\ \d{t}.
\end{align*}
It is well known that $L_{p+\epsilon}\subset L_{p,1}\subset L_{p}$ holds for any underlying 
probability measure, and hence a finite $L_{p,1}$ condition requires slightly more than a $p$-th moment, 
but no more than any $p+\epsilon$ moment, see Chapter 10 of \cite{ledoux2013probability}. 
In this paper, we will primarily be concerned with the case $p=2$.

For a real-valued measurable function $f$ defined on $(\mathcal{X},\mathcal{A},P)$, $\pnorm{f}{L_p(P)}\equiv \big(P\abs{f}^p)^{1/p}$ denotes the usual $L_p$-norm under $P$, and $\pnorm{f}{\infty}\equiv\pnorm{f}{L_\infty}\equiv  \sup_{x \in \mathcal{X}} \abs{f(x)}$. $f$ is said to be $P$-centered if $Pf=0$. $L_p(g,B)$ denotes the $L_p(P)$-ball centered at $g$ with radius $B$. For simplicity we write $L_p(B)\equiv L_p(0,B)$. 

Let $(\mathcal{F},\pnorm{\cdot}{})$ be a subset of the normed space of real functions $f:\mathcal{X}\to \R$. Let $\mathcal{N}(\epsilon,\mathcal{F},\pnorm{\cdot}{})$ be the $\epsilon$-covering number, and let  $\mathcal{N}_{[\,]}(\epsilon,\mathcal{F},\pnorm{\cdot}{})$ be the $\epsilon$-bracketing number; see page 83 of \cite{van1996weak} for more details. To avoid unnecessary measurability digressions, we assume that $\mathcal{F}$ is countable throughout the article. As usual, for any $\phi:\mathcal{F}\to \R$, we write $\pnorm{\phi(f)}{\mathcal{F}}$ for $ \sup_{f \in \mathcal{F}} \abs{\phi(f)}$. 

Throughout the article $\epsilon_1,\ldots,\epsilon_n$ will be i.i.d. Rademacher random variables independent of all other random variables. $C_{x}$ will denote a generic constant that depends only on $x$, whose numeric value may change from line to line unless otherwise specified. $a\lesssim_{x} b$ and $a\gtrsim_x b$ mean $a\leq C_x b$ and $a\geq C_x b$ respectively, and $a\asymp_x b$ means $a\lesssim_{x} b$ and $a\gtrsim_x b$ [$a\lesssim b$ means $a\leq Cb$ for some absolute constant $C$]. For two real numbers $a,b$, $a\vee b\equiv \max\{a,b\}$ and $a\wedge b\equiv\min\{a,b\}$. We slightly abuse notation by defining $\log(x)\equiv \log(x\vee e)$.

\subsection{Organization}
Section \ref{section:theory} is devoted to a treatment of the relationship between the size of the localized envelopes and the convergence rate of the $L_2$ loss of the least squares estimator. Section \ref{section:shape_restricted} is devoted to applications to shape-restricted regression problems. Proofs are deferred to Sections \ref{section:proof_main_results} and \ref{section:proof_remaining}.

\section{Convergence rate of the LSE: the envelope characterization}\label{section:theory}

\subsection{Upper and lower bounds}

Our first main result is the following.

\begin{theorem}\label{thm:envelope_rate_upper_bound}
Suppose that $\xi_1,\ldots,\xi_n$ are i.i.d. mean-zero errors independent of i.i.d. covariates 
$X_1,\ldots,X_n$ with law $P$ such that $\pnorm{\xi_1}{2,1}<\infty$. Further suppose that 
$\mathcal{F}_0\equiv \mathcal{F}-f_0$ is a VC-subgraph class, and the envelopes 
$F_0(\delta)$ of $\mathcal{F}_0(\delta)\equiv \{f \in \mathcal{F}_0: Pf^2\leq \delta^2 \}$ satisfy the growth condition
\begin{align}\label{cond:size_envelope}
\pnorm{F_0(\delta)}{L_2(P)} \leq c \cdot \delta^\gamma,\quad \textrm{for all } \delta>0
\end{align}
for some constants $0\leq \gamma \leq 1$ and $c>0$. If $\pnorm{\hat{f}_n-f_0}{\infty}=\mathcal{O}_{\mathbf{P}}(1)$, then
\begin{align*}
\pnorm{\hat{f}_n-f_0}{L_2(P)}= \mathcal{O}_{\mathbf{P}}\big( n^{-\frac{1}{2(2-\gamma)}}\big).
\end{align*}
\end{theorem}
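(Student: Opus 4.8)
The plan is to run the standard rate-of-convergence machinery for $M$-estimators — basic inequality, a modulus-of-continuity estimate, then a peeling argument, as in the proof of Theorem~\ref{thm:general_rate} (see also \cite{van1996weak}) — the only genuinely new ingredient being that the relevant modulus will be controlled by combining the multiplier inequality of \cite{han2017sharp} with the envelope growth condition \eqref{cond:size_envelope}. The hypothesis $\pnorm{\hat f_n-f_0}{\infty}=\mathcal{O}_{\mathbf{P}}(1)$ will be used only to localize to uniformly bounded functions. Writing $\mathbb{M}_n(f)=-\mathbb{P}_n(Y-f)^2$ and $\mathbb{M}(f)=-\E(Y-f(X))^2$, maximality of $\hat f_n$ over $\mathcal{F}\ni f_0$ gives $\mathbb{M}_n(\hat f_n)\geq\mathbb{M}_n(f_0)$, while expanding with $Y_i=f_0(X_i)+\xi_i$ and using that the $\xi_i$ are mean zero and independent of the $X_i$ yields the curvature identity $\mathbb{M}(f)-\mathbb{M}(f_0)=-\pnorm{f-f_0}{L_2(P)}^2$ together with
\[
(\mathbb{M}_n-\mathbb{M})(f)-(\mathbb{M}_n-\mathbb{M})(f_0)=2\,\mathbb{P}_n\big[\xi\,(f-f_0)\big]-(\mathbb{P}_n-P)(f-f_0)^2=:\nu_n(f),
\]
so the basic inequality reads $\pnorm{\hat f_n-f_0}{L_2(P)}^2\leq\nu_n(\hat f_n)$.

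\textbf{Modulus of continuity.} Write $\mathcal{F}_0(\delta,M):=\{g\in\mathcal{F}_0(\delta):\pnorm{g}{\infty}\leq M\}$; this is a subclass of the VC-subgraph class $\mathcal{F}_0$, hence itself VC-subgraph of index at most that of $\mathcal{F}_0$, with envelope $F_0(\delta)$. The heart of the proof is to show that, for each fixed $M>0$, there is a constant $K_M$ independent of $n$ and $\delta$ with
\[
\E\sup_{g\in\mathcal{F}_0(\delta,M)}\bigabs{\nu_n(f_0+g)}\ \leq\ K_M\,n^{-1/2}\,\delta^{\gamma}.
\]
The quadratic term $(\mathbb{P}_n-P)g^2$ is handled by symmetrization followed by the contraction principle ($t\mapsto t^2$ is $2M$-Lipschitz on $[-M,M]$ and fixes $0$, and $0\in\mathcal{F}_0$), which reduces it to the symmetrized empirical process of $\mathcal{F}_0(\delta,M)$; Dudley's entropy bound for VC-subgraph classes and \eqref{cond:size_envelope} then give a contribution of order $M\sqrt{V}\,\pnorm{F_0(\delta)}{L_2(P)}\,n^{-1/2}\lesssim Mc\sqrt{V}\,\delta^{\gamma}n^{-1/2}$, where $V$ is the VC index. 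For the multiplier term $2\mathbb{P}_n[\xi g]$, I would use $\E[\xi\,g(X)]=0$ and apply the multiplier inequality of \cite{han2017sharp}, which bounds $\E\bigpnorm{\sum_{i\leq n}\xi_i g(X_i)}{\mathcal{F}_0(\delta,M)}$ by a universal constant times $\int_0^\infty\E\bigpnorm{\sum_{i\leq\floor{n\Prob(\abs{\xi_1}>t)}}\epsilon_i g(X_i)}{\mathcal{F}_0(\delta,M)}\,\d{t}$; bounding each inner symmetrized process — again by Dudley and \eqref{cond:size_envelope} — by $\lesssim\sqrt{n\,\Prob(\abs{\xi_1}>t)}\,\sqrt{V}\,\pnorm{F_0(\delta)}{L_2(P)}$ and integrating with $\int_0^\infty\Prob(\abs{\xi_1}>t)^{1/2}\,\d{t}=\pnorm{\xi_1}{2,1}$ yields $\E\bigpnorm{\sum_{i\leq n}\xi_i g(X_i)}{\mathcal{F}_0(\delta,M)}\lesssim\sqrt{n}\,\pnorm{\xi_1}{2,1}c\sqrt{V}\,\delta^{\gamma}$, i.e. the multiplier term is $\lesssim\pnorm{\xi_1}{2,1}c\sqrt{V}\,\delta^{\gamma}n^{-1/2}$. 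Adding the two contributions gives the display with $K_M\asymp(\pnorm{\xi_1}{2,1}+M)c\sqrt{V}$; since $\gamma\leq 1<2$, the map $\delta\mapsto K_M\delta^{\gamma}/\delta^{\alpha}$ is non-increasing for any $\alpha\in(\gamma,2)$, which is the shape the peeling step needs.

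\textbf{Peeling.} Fix $\eta>0$, choose $M$ so that $\Prob(\pnorm{\hat f_n-f_0}{\infty}>M)<\eta/2$ for all $n$, and let $r_n$ solve $K_M r_n^{2-\gamma}=\sqrt{n}$, so that $r_n\asymp_\eta n^{1/(2(2-\gamma))}$. On the event $\{\pnorm{\hat f_n-f_0}{\infty}\leq M\}\cap\{2^{j-1}/r_n<\pnorm{\hat f_n-f_0}{L_2(P)}\leq 2^{j}/r_n\}$ one has $\hat f_n-f_0\in\mathcal{F}_0(2^{j}/r_n,M)$, so the basic inequality forces $\sup_{g\in\mathcal{F}_0(2^{j}/r_n,M)}\bigabs{\nu_n(f_0+g)}\geq(2^{j-1}/r_n)^2$; by Markov's inequality and the modulus bound this event has probability at most $4K_M(2^{j}/r_n)^{\gamma-2}n^{-1/2}=4\cdot 2^{-j(2-\gamma)}$, using $r_n^{2-\gamma}=\sqrt{n}/K_M$. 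Since $2-\gamma\geq 1$, summing over $j>J$ gives a bound of order $2^{-J(2-\gamma)}$, which is $<\eta/2$ once $J=J(\eta)$ is large enough; hence $\Prob\big(\pnorm{\hat f_n-f_0}{L_2(P)}>2^{J}/r_n\big)<\eta$ for all $n$. As $2^{J}/r_n\asymp_\eta n^{-1/(2(2-\gamma))}$ and $\eta$ was arbitrary, this is exactly $\pnorm{\hat f_n-f_0}{L_2(P)}=\mathcal{O}_{\mathbf{P}}\big(n^{-1/(2(2-\gamma))}\big)$.

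\textbf{Main obstacle.} The delicate step is the multiplier-term estimate. Condition \eqref{cond:size_envelope} feeds in the envelope size $\pnorm{F_0(\delta)}{L_2(P)}\sim\delta^{\gamma}$, which for $\gamma<1$ is far larger than the $L_2(P)$-diameter $\delta$ of $\mathcal{F}_0(\delta)$; the classical multiplier inequality (with a $\max_{k\leq n}$ on the right-hand side) does not convert this envelope size into the stated rate under only an $L_{2,1}$ moment, and it is precisely the sharp form in \cite{han2017sharp} — replacing the $\max$ by an integral over $t$ of symmetrized processes of reduced sample size $\floor{n\Prob(\abs{\xi_1}>t)}$, weighted so that integration produces the $L_{2,1}$ functional — that yields the clean bound $\pnorm{\xi_1}{2,1}\delta^{\gamma}n^{-1/2}$ with no spurious logarithmic factor. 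The remaining ingredients are routine: subsets and fixed Lipschitz images of a VC-subgraph class remain VC-subgraph of no larger index, and the passage between $L_2(P)$ and empirical $L_2$ norms inside Dudley's bound for the reduced-sample processes is handled by bounding the empirical diameter by the envelope and taking expectations.
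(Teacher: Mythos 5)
Your proof is correct and, at the level of ingredients, matches the paper's closely: the basic inequality, symmetrization plus contraction plus Koltchinskii–Pollard for the quadratic term, the sharp multiplier inequality from \cite{han2017sharp} combined with Koltchinskii–Pollard for the multiplier term (feeding in the envelope growth to get $\psi_n(k)\asymp\sqrt{k}\,\delta^\gamma$), and peeling over $L_2$ shells after truncating to $L_\infty(M)$ via the stochastic boundedness hypothesis. The one genuine difference is that you bound only the first moment of the localized process $\sup_g\abs{\nu_n(f_0+g)}$ and then apply Markov's inequality, which gives shell probabilities of order $2^{-j(2-\gamma)}$ and hence a tail of order $t^{-(2-\gamma)}$; the paper, by contrast, invokes the Gin\'e–Guillou moment inequality (its Lemma \ref{lem:p_moment_estimate}) to control the second moment of the localized suprema, which yields the sharper tail $\Prob(\pnorm{\hat f_n-f_0}{L_2(P)}\geq t\delta_n)\lesssim t^{-2}$. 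Both are adequate for the $\mathcal{O}_{\mathbf{P}}$ claim of the theorem, but the $t^{-2}$ tail is integrable for all $\gamma\in[0,1]$ and is what makes the expectation bound of Remark \ref{rmk:upper_bound_lse}(1) drop out in the uniformly bounded case; your $t^{-(2-\gamma)}$ tail fails to be integrable at $\gamma=1$, so your argument as written would not recover that refinement. In short: same route, but the paper pays the extra cost of a second-moment estimate to buy a stronger tail, which is why Lemma \ref{lem:p_moment_estimate} appears in its proof and not in yours.
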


\begin{remark}\label{rmk:upper_bound_lse}
Some technical remarks are in order.
\begin{enumerate}
\item If instead of $\pnorm{\hat{f}_n-f_0}{\infty}=\mathcal{O}_{\mathbf{P}}(1)$
it is assumed that $\mathcal{F}_0 \subset L_\infty(1)$, then the conclusion of 
Theorem \ref{thm:envelope_rate_upper_bound} can be strengthened to an expectation: 
$\E\pnorm{\hat{f}_n-f_0}{L_2(P)}= \mathcal{O}\big( n^{-\frac{1}{2(2-\gamma)}}\big)$.
\item Condition (\ref{cond:size_envelope}) on the size of the localized envelopes can 
be modified to incorporate logarithmic factors. In particular, if
\begin{align*}
\pnorm{F_0(\delta)}{L_2(P)} \leq c \cdot \delta^\gamma \log^\tau (1/\delta),
\end{align*}
then we may slightly modify the proof of Theorem \ref{thm:envelope_rate_upper_bound} 
to see that the convergence rate of the $L_2$ loss of the LSE is given by 
\begin{align*}
\mathcal{O}_{\mathbf{P}}\big(n^{-\frac{1}{2(2-\gamma)}} \log^{\frac{\tau}{2-\gamma}} n\big).
\end{align*}
\item We assume that the errors are identically distributed for simplicity: the case of mean-zero, 
independent but not necessarily identically distributed errors follows from a minor modification of the proof.
\end{enumerate}
\end{remark}

\begin{remark}\label{rmk:uniform_VC}
Theorem \ref{thm:envelope_rate_upper_bound} is actually proved for $\mathcal{F}_0$ 
under a more general \emph{uniform VC-type} condition: $\mathcal{F}_0$ is said to be of 
uniform VC-type if there exists some $\alpha \in [0,2)$ and $\beta \in [0,\infty)$
\footnote{We can also allow $\alpha=2, \beta<-2$ but we are not aware of any such examples.} 
such that for any probability measure $Q$, and any $\epsilon \in (0,1), \delta>0$,
\begin{align}\label{cond:uniform_entropy}
\log \mathcal{N}\left(\epsilon\pnorm{F_0(\delta)}{L_2(Q)},\mathcal{F}_0(\delta),L_2(Q)\right)\lesssim \epsilon^{-\alpha}\log^\beta(1/\epsilon).
\end{align}
The most significant examples for uniform VC-type classes are the VC-subgraph 
classes ($\alpha=0,\beta=1$).  Other important examples include the VC-major classes, 
which satisfy (\ref{cond:uniform_entropy}) up to a logarithmic factor (cf. Lemma \ref{lem:subset_VC}). 
As we will see in Section \ref{section:shape_restricted}, the canonical examples of VC-major 
classes that satisfy (\ref{cond:uniform_entropy}) considered in this paper are the classes of 
bounded monotonic non-decreasing and convex functions on $[0,1]$.
\end{remark}

\begin{remark}
From a purely probabilistic point of view, the condition (\ref{cond:size_envelope}) is related to 
Alexander's capacity function \cite{alexander1985rates,alexander1987central,alexander1987rates} 
defined for VC class of sets that gives relatively sharp asymptotic local moduli of weighted 
empirical processes indexed by such classes. Results in a similar vein can be found in 
\cite{gine2006concentration} who generalized this notion to bounded VC-subgraph function classes.
\end{remark}

So far we have derived an upper bound for the convergence rate of the $L_2$ loss of the LSE 
under the condition (\ref{cond:size_envelope}). It is natural to wonder if such an upper bound is sharp 
in an appropriate sense.

\begin{theorem}\label{thm:envelope_rate_lower_bound}
Let $P$ be the uniform distribution on $[0,1]$. For any $\gamma \in (0,1]$, there exists some 
uniformly bounded VC-subgraph class  $\tilde{\mathcal{F}}$ on $[0,1]$ and some $f_0 \in \tilde{\mathcal{F}}$ 
such that $\tilde{\mathcal{F}}_0\equiv \tilde{\mathcal{F}}-f_0$ satisfies (\ref{cond:size_envelope}), 
and the following property holds: for each $\epsilon \in (0,1/2)$, there exist some constants 
$ c_{\epsilon,\gamma}>0$, $\mathfrak{p}>0$ and some law for $\xi_1$ with $\pnorm{\xi_1}{2(1-\epsilon)}<\infty$ 
such that, for $n$ large enough depending on $\epsilon,\gamma$, there exists a LSE $f_n^\ast$ whose $L_2$ loss satisfies
\begin{align*}
 \pnorm{f^\ast_n-f_0}{L_2(P)}\geq c_{\epsilon,\gamma}\cdot n^{-\frac{1}{2(2-\gamma)}-c_{\gamma}'\epsilon}
\end{align*}
with probability at least $\mathfrak{p}>0$. The constant $c_{\gamma}'$ can be taken to be $2/\gamma$.
\end{theorem}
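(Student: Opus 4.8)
The plan is to construct, for each $\gamma\in(0,1)$, a class of nested bumps whose localized envelopes have exactly the prescribed polynomial growth, and then to let a single heavy-tailed error drive the least squares fit, the extremal competitor being a bump of a carefully tuned intermediate width centered at that error. (For $\gamma=1$ I would instead take the one-dimensional class $\tilde{\mathcal F}=\{c\phi:|c|\le1\}$ for a fixed bounded $\phi$ with $\pnorm{\phi}{L_2(P)}\asymp1$, where $\pnorm{F_0(\delta)}{L_2(P)}\asymp\delta\wedge1$ and the argument degenerates to the ordinary least-squares computation; so I take $\gamma\in(0,1)$ below.) Concretely I would set $\beta:=\gamma/\bigl(2(1-\gamma)\bigr)\in(0,\infty)$, $h_j:=2^{-\beta j}$, $f_0:=0$, and
\[
\tilde{\mathcal F}=\tilde{\mathcal F}_0:=\{0\}\cup\bigl\{\,h_j\,\mathbf{1}_{[a,\,a+2^{-j}]}:\ j\in\N,\ a\in[0,1-2^{-j}]\,\bigr\}.
\]
This class lies in $L_\infty(1)$ and is VC-subgraph (its subgraphs lie in the VC class of sets $[a,b]\times(-\infty,c]$); and since every $x\in[0,1]$ lies in a length-$2^{-j}$ subinterval of $[0,1]$ for every $j$, the envelope of $\tilde{\mathcal F}_0(\delta)$ is the constant $F_0(\delta)(x)=\sup\{h_j:\,h_j^2 2^{-j}\le\delta^2\}=\sup\{2^{-\beta j}:\,2^{-(2\beta+1)j}\le\delta^2\}\le\delta^{2\beta/(2\beta+1)}\wedge1=\delta^{\gamma}\wedge1$, so (\ref{cond:size_envelope}) holds with $c=1$. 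I will use that $2\beta+1=1/(1-\gamma)$, $\beta+1=(2-\gamma)/\bigl(2(1-\gamma)\bigr)$, $\beta/\gamma=1/\bigl(2(1-\gamma)\bigr)$, and that a single bump $g=h_j\mathbf{1}_I$ satisfies $h_j=\pnorm{g}{L_2(P)}^{\gamma}$.

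\emph{Errors and the favorable event.} Given $\epsilon\in(0,1/2)$ I would set $q:=2-\epsilon\in(1,2)$ and take $\xi_1,\dots,\xi_n$ i.i.d.\ symmetric with $\Prob(|\xi_1|>t)=(t\vee1)^{-q}$, so $\pnorm{\xi_1}{2(1-\epsilon)}<\infty$ (as $2(1-\epsilon)<q$) while $\pnorm{\xi_1}{2,1}=\infty$. Let $i^\ast$ attain $\max_{i\le n}\xi_i$, $M:=\xi_{i^\ast}$; fix $j_\ast$ with $w_\ast:=2^{-j_\ast}$ of order $c_0\,n^{(1/q-1)/(\beta+1)}$ for a small $c_0=c_0(\gamma)$; let $I_\ast$ be the length-$w_\ast$ interval centered at $X_{i^\ast}$ and $g^\ast:=h_{j_\ast}\mathbf{1}_{I_\ast}\in\tilde{\mathcal F}_0$, so that $\pnorm{g^\ast}{L_2(P)}=h_{j_\ast}^{1/\gamma}=w_\ast^{1/(2(1-\gamma))}\asymp_\gamma n^{(1/q-1)/(2-\gamma)}$. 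I would let $E_n$ be the intersection of the events (i) $n^{1/q}\le M\le A n^{1/q}$; (ii) $X_{i^\ast}\in[w_\ast,1-w_\ast]$; (iii) $\bigl|\sum_{i\in I_\ast,\,i\ne i^\ast}\xi_i\bigr|\le\tfrac14 n^{1/q}$; (iv) $\#\{i:X_i\in I_\ast\}\le1+2nw_\ast$; (v) $\max_{j\le\log_2 n}\max_{|I|=2^{-j}}\sum_{i:X_i\in I}\xi_i\le C n^{1/q}\log n$, and the crux of the argument is to show $\Prob(E_n)\ge\mathfrak p>0$ for all large $n$ and suitable $A,C$ depending on $(\epsilon,\gamma)$. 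Here (i) is the extreme-value bound for the maximum of an i.i.d.\ Pareto-type sample, (ii) and (iv) are elementary binomial estimates (note $nw_\ast\to\infty$), and (iii), (v) are heavy-tailed maximal inequalities: truncate the $\xi_i$ at level $\asymp n^{1/q}$, apply Bernstein to the centered truncated increments (variance $\lesssim n^{(2-q)/q}$) with a union bound over the $O(2^j)$ intervals at each of the $O(\log n)$ scales, and for (iii) first condition on $(i^\ast,X_{i^\ast},\xi_{i^\ast})$, under which the remaining errors are i.i.d.\ from the law of $\xi_1$ truncated at $M$ (mean near zero, variance $\lesssim M^{2-q}\ll n^{1/q}$). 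This is precisely the regime the multiplier/empirical-process estimates of \cite{han2017sharp} are designed for, so I would invoke them here.

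\emph{Basic inequality and conclusion.} On $E_n$, fix any least squares estimator $f_n^\ast$ over $\tilde{\mathcal F}$; it does at least as well as $g^\ast$, which (by the display below) does strictly better than $0$, hence $f_n^\ast\ne0$ and so $f_n^\ast=h_J\mathbf{1}_{\hat I}$ for some $J$ and some interval $\hat I$. With $\Sigma_{I_\ast}:=\sum_{i\in I_\ast}\xi_i\ge M-\tfrac14 n^{1/q}\ge\tfrac34 M$ by (i),(iii), and $c_0$ small and $n$ large enough that $h_{j_\ast}(1+2nw_\ast)=w_\ast^{\beta}+2nw_\ast^{\beta+1}\le\tfrac34 n^{1/q}$ (legitimate since $nw_\ast^{\beta+1}\asymp_\gamma c_0^{\beta+1}n^{1/q}$ and $w_\ast^{\beta}\to0$), a short calculation gives
\[
\sum_{i=1}^n\bigl(\xi_i-g^\ast(X_i)\bigr)^2-\sum_{i=1}^n\xi_i^2=h_{j_\ast}^2\,\#\{i:X_i\in I_\ast\}-2h_{j_\ast}\Sigma_{I_\ast}\le h_{j_\ast}\bigl(h_{j_\ast}(1+2nw_\ast)-\tfrac32 M\bigr)\le-\tfrac34 h_{j_\ast}n^{1/q}.
\]
Since $f_n^\ast$ minimizes the sum of squares, the same bound holds with $g^\ast$ replaced by $f_n^\ast$; dropping the nonnegative quadratic term and using (v) yields $C n^{1/q}\log n\cdot h_J\ge h_J\sum_{i\in\hat I}\xi_i\ge\tfrac38 h_{j_\ast}n^{1/q}$, i.e.\ $h_J\ge 3h_{j_\ast}/(8C\log n)$. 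Because $h_J=\pnorm{f_n^\ast}{L_2(P)}^{\gamma}$ for a single bump, this gives
\[
\pnorm{f_n^\ast-f_0}{L_2(P)}\ \ge\ \Bigl(\tfrac{3h_{j_\ast}}{8C\log n}\Bigr)^{1/\gamma}\ \gtrsim_\gamma\ \frac{w_\ast^{1/(2(1-\gamma))}}{(\log n)^{1/\gamma}}\ \asymp_\gamma\ \frac{n^{(1/q-1)/(2-\gamma)}}{(\log n)^{1/\gamma}}.
\]
Finally $q<2$ makes $(1/q-1)/(2-\gamma)\ge-1/\bigl(2(2-\gamma)\bigr)$ (as $(1-\epsilon)/(2-\epsilon)\le\tfrac12$), and $(\log n)^{1/\gamma}\le n^{(2/\gamma)\epsilon}$ for $n$ large, so on $E_n$ (probability $\ge\mathfrak p$) one gets $\pnorm{f_n^\ast-f_0}{L_2(P)}\ge c_{\epsilon,\gamma}\,n^{-\frac1{2(2-\gamma)}-c_\gamma'\epsilon}$ with $c_\gamma'=2/\gamma$, as required; note $\tilde{\mathcal F}$ is the $\gamma>0$ refinement of the $\gamma=0$ hard example of \cite{han2017sharp} (Example~\ref{ex:hard_regression}), which corresponds to $h_j\equiv1$.

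\emph{Main obstacle.} The construction, the envelope computation and the deterministic comparison above are routine; the genuine difficulty is the uniform heavy-tailed maximal inequality for partial sums of the $\xi_i$ over all short intervals and all relevant scales under only (essentially) a second moment, together with the conditioning needed to evaluate $g^\ast$ at the dominant error — this is where the machinery of \cite{han2017sharp} is indispensable, and the logarithmic and $\epsilon$-slack in the statement is the price paid for these uniform controls and for working with a tail index strictly below $2$.
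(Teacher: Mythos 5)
Your proposal takes a genuinely different route from the paper.  The paper uses the reduction of Proposition~\ref{prop:proof_route_lower_bound_lse} (compare $E_n(\delta_1)$ with $F_n(\delta_2)$), with $\tilde{\mathcal F}$ built as a tree of height-one indicators, symmetric $(2-\epsilon)$-stable noise so that partial sums rescale exactly, a Paley--Zygmund argument together with Lemma~\ref{lem:characterization_maxima} to push $F_n(\delta_2)$ up, and the multiplier inequality plus Koltchinskii--Pollard to push $E_n(\delta_1)$ down.  You instead build a bump class whose height and width are tied by the exponent $\gamma$, use Pareto-type noise, and argue directly: on a favorable event a competitor $g^\ast$ centred at the largest error beats $0$, so the LSE cannot be $0$, and then a deterministic comparison forces the LSE's bump height to be comparable to $h_{j_\ast}$.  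The envelope computation, the identity $\pnorm{g}{L_2(P)}^\gamma=h_j$, and the deterministic comparison (basic inequality against $g^\ast$, drop the nonnegative quadratic, divide by $h_J$) are all correct, and the final arithmetic closes: since $(1-\epsilon)/(2-\epsilon)<1/2$ the exponent already beats $-1/(2(2-\gamma))$, so the $(\log n)^{-1/\gamma}$ factor is comfortably absorbed by the $n^{-(2/\gamma)\epsilon}$ slack.  What the paper's route buys is that it only needs \emph{localized} control (over $\{f:Pf^2\le\delta_1^2\}$), whereas your route needs a uniform control over \emph{all} scales $J$ (the LSE's scale is data-dependent), which is a stronger requirement.

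Two technical points need repair.  First, in event (iii) the Bernstein bound you cite is not sharp enough: with truncation level $M\asymp n^{1/q}$ and target deviation $\frac14 n^{1/q}$, the Bernstein exponent is only $O(1)$, so $\Prob(\text{(iii)}^c)$ does not vanish and cannot be union-bounded against $\Prob(\text{(i)})<1-e^{-1/2}$.  Chebyshev is the right tool here and does give $\Prob(\text{(iii)}^c)\to 0$ (variance of the truncated sum is $\asymp nw_\ast\,n^{(2-q)/q}\ll n^{2/q}$, and the slight negative conditional mean is also $o(n^{1/q})$); you should also make the conditioning on $(i^\ast,\xi_{i^\ast},X_{i^\ast})$ explicit and handle events (i) and (iii) jointly through it.  Second, event (v) is the crux, as you acknowledge: it is a uniform empirical-process bound over the continuum of interval positions at all scales, not a finite union bound, so the ``union bound over $O(2^j)$ intervals'' sketch is incomplete (though you can pass to a grid and control the approximation).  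It \emph{can} be filled using the paper's machinery — apply Lemma~\ref{lem:multiplier_ineq} with $\psi_n(k)=Ck^{1/(2-2\epsilon)}$ so that $\pnorm{\xi_1}{2(1-\epsilon),1}<\infty$, then Markov — but the resulting bound is $n^{1/(2-2\epsilon)}$ rather than $n^{1/q}\log n$, i.e.\ worse by a factor $n^{c\epsilon}$; this is still absorbed by the $\epsilon$-slack in the theorem statement, but the bookkeeping should be redone.  Finally, a minor point: your class has a continuum of translates and so is uncountable, conflicting with the paper's blanket countability convention; restricting $a$ to the dyadic rationals at each scale fixes this without affecting the envelope.
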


Theorem \ref{thm:envelope_rate_lower_bound} shows that our upper bound Theorem 
\ref{thm:envelope_rate_upper_bound} cannot be improved substantially under (\ref{cond:size_envelope}): 
the size of the localized envelopes drives the convergence rate of the $L_2$ loss 
of the LSE over VC-subgraph models (or more generally, models of uniform VC-type) 
in the heavy-tailed regression setting where the errors only admit (roughly) 
a second moment. Since the median regression estimator over VC-subgraph 
models achieves a nearly parametric rate $\mathcal{O}_{\mathbf{P}}(\sqrt{\log n/n})$ 
at least when the errors are symmetric and admit smooth densities; cf. Section 3.4.4 of \cite{van1996weak}, 
Theorem \ref{thm:envelope_rate_lower_bound} illustrates a genuine deficiency of the LSE in 
VC-subgraph models when the envelopes of the model are not small.
We remark that the case $\gamma=0$ is excluded mainly for simplicity of presentation; 
similar conclusions hold under a slightly weaker formulation, cf. Theorem 5 of \cite{han2017sharp}.

The proofs of Theorems \ref{thm:envelope_rate_upper_bound} and 
\ref{thm:envelope_rate_lower_bound} are based on recent developments 
on the \emph{equivalence} between the convergence rate of the $L_2$ 
loss of the LSE and the size of the multiplier empirical process, cf. 
\cite{chatterjee2014new,han2017sharp,van2015concentration}. 
For the upper bound, our proofs rely heavily on a new multiplier inequality 
developed in \cite{han2017sharp}. The lower bound, on the other hand, is 
based on an explicit construction of $\tilde{\mathcal{F}}$ that witnesses the 
desired rate within uniformly bounded VC-subgraph classes satisfying (\ref{cond:size_envelope}).

\subsection{Examples}\label{section:examples_revisit}
In this section, we use Theorem \ref{thm:envelope_rate_upper_bound} to examine the 
convergence rate of the $L_2$ loss of the LSE in several important examples. 

\begin{example}[Linear model]\label{ex:linear_regression}
	Let $\mathcal{F}\equiv \{f_\beta(x)\equiv \beta^\top x: \beta \in \R^d\}$ and let $P$ 
	be the uniform distribution on $[0,1]^d$ . This is the simplest linear regression model. 
	A second moment assumption on the errors $\xi_i$'s yields a closed-form LSE with a 
	parametric convergence rate: 
	$\pnorm{\hat{f}_n-f_0}{L_2(P)}\asymp \pnorm{\hat{\beta}_n-\beta_0}{2}=\mathcal{O}_{\mathbf{P}}(n^{-1/2})$. 
	This rate is obviously much faster than the worst-case rate $\mathcal{O}_{\mathbf{P}}(n^{-1/4})$ as suggested by 
	Theorem \ref{thm:general_rate}. Thus the LSE sequence $\{\hat{f}_n \}$ is $L_2$-robust for the model 
	${\mathcal F}$ by a direct argument while our Theorem 1 very nearly recovers this:  it shows that 
	$\{\hat{f}_n \}$ is $L_{2,1}$-robust for the model  ${\mathcal F}$.
	
	For simplicity of discussion, we assume $d=1$ in the sequel. We may also restrict the model to be 
	$\{f_\beta:\beta \in [-1,1]\}$; this is viable since the LSE \emph{localizes} in the sense that 
	$\pnorm{\hat{f}_n}{\infty}=\abs{\hat{\beta}_n}=\mathcal{O}_{\mathbf{P}}(1)$. Moreover, it is clear 
	that the model is a VC-subgraph class. For any $\delta>0$, $\pnorm{f_\beta}{L_2(P)}\leq \delta$ 
	implies that $\abs{\beta}\leq \sqrt{3} \delta$, and thus
	\begin{align*}
	F(\delta)(x)=\sup_{\beta \in [-\sqrt{3} \delta,\sqrt{3} \delta]} \abs{\beta x}= \sqrt{3} \delta \abs{x},
	\end{align*}
	which in turn yields $\pnorm{F(\delta)}{L_2(P)}=\delta$.  
	Hence Theorem \ref{thm:envelope_rate_upper_bound} applies with $\gamma=1$ to recover the 
	usual parametric rate $\mathcal{O}_{\mathbf{P}}(n^{-1/2})$ for the $L_2$ loss of the LSE. 
	
	Our approach here should be compared with the common practice of using local entropy to recovery the exact parametric rate for parametric models---but the latter does not extend directly to the heavy-tailed regression setting, cf. pages 152-153 of \cite{van2000empirical}.
\end{example}

\begin{example}[Isotonic model]\label{ex:isotonic_regression}
	Let $\mathcal{F}$ be the class of monotonic non-decreasing functions on $[0,1]$ 
	and let $P$ be the uniform distribution on $[0,1]$. It is shown in a related fixed design 
	setting (cf. \cite{chatterjee2015risk,gao2017minimax,zhang2002risk}) that a second 
	moment condition on the errors $\xi_i$ is sufficient for the isotonic LSE to achieve the 
	nearly parametric adaptive rate $\mathcal{O}_{\mathbf{P}}(\sqrt{\log n/n})$ in the 
	discrete $\ell_2$ loss, when the true signal is $f_0=0$. This naturally suggests a 
	similar rate for the $L_2$ loss of the isotonic LSE in the random design setting. 
	Apparently, this (suggested) nearly parametric rate is far from the worst-case rate 
	$\mathcal{O}_{\mathbf{P}}(n^{-1/4})$.
	
	In this model, since the univariate isotonic LSE localizes in $L_\infty$ norm 
	(cf. Lemma \ref{lem:uniform_bound_iso}), we may assume without loss of generality 
	that $\mathcal{F}\equiv \{f: \textrm{non-decreasing}, \pnorm{f}{\infty}\leq 1\}$. 
	The entropy condition (\ref{cond:uniform_entropy}) can be verified using the VC-major 
	property of $\mathcal{F}$ up to a logarithmic factor (cf. Lemma \ref{lem:subset_VC}). 
	On the other hand, for any $\delta>0$, by monotonicity and the $L_2$ constraint, we can take
	\begin{align*}
	F(\delta)(x) \equiv \delta \cdot \big(x^{-1/2}\vee (1-x)^{-1/2}\big) \wedge 1.
	\end{align*}
	Evaluating the integral we see that $\pnorm{F(\delta)}{L_2(P)}\lesssim \delta \sqrt{\log (1/\delta)}$. 
	Then an application of Theorem \ref{thm:envelope_rate_upper_bound} along with 
	Remarks \ref{rmk:upper_bound_lse} (2) and \ref{rmk:uniform_VC}, we see that the $L_2$ 
	loss of the LSE $\hat{f}_n$ converges at a parametric rate up to logarithmic factors when the truth $f_0$ is a constant function and the errors are $L_{2,1}$.
	The observation concerning the role of
	the localized envelopes in the isotonic model here is the starting point for a systematic development of oracle inequalities for shape-restricted LSEs in Section \ref{section:shape_restricted}.
\end{example}

\begin{example}[Single change-point model]
Let $\mathcal{F}\equiv \{\bm{1}_{[a,1]}: a \in [0,1]\}$ be the model containing signals on $[0,1]$ 
with a single change point. Let $P$ be the uniform distribution on $[0,1]$. 

This model is contained in the isotonic model---from here we already know by Example 
\ref{ex:isotonic_regression} that the localized envelopes of $\mathcal{F}$ are small, and 
hence the LSE converges at a rate no worse than a nearly parametric rate under an $L_{2,1}$ moment assumption on the errors. We can do better: since the localized envelopes are 
exactly given by $F(\delta)=\bm{1}_{[1-\delta^2,1]}$, it follows that $\pnorm{F(\delta)}{L_2(P)}=\delta$, 
and hence by Theorem \ref{thm:envelope_rate_upper_bound} with $\gamma=1$ we see that the 
LSE converges exactly at the parametric rate $\mathcal{O}_{\mathbf{P}}(n^{-1/2})$ even if the 
errors only admit an $L_{2,1}$ moment. This is in stark contrast with the \emph{multiple change-points model} detailed below.
\end{example}

\begin{example}[Multiple change-points model]\label{ex:hard_regression}
	Consider the following multiple change-points model:
	\begin{align*}
	\mathcal{F}_k\equiv \bigg\{&\sum_{i=1}^k c_i \bm{1}_{[x_{i-1},x_{i}]}: \abs{c_i}\leq 1,\\
	& \quad 0\leq x_0<x_1<\ldots<x_{k-1}<x_k\leq 1\bigg\}, k\geq 1.
	\end{align*}
	It is shown in \cite{han2017sharp} that the $L_2$ loss of the LSE over (a subset of) 
	$\mathcal{F}_k$ cannot converge at a rate faster than $\mathcal{O}_{\mathbf{P}}(n^{-1/4})$ 
	for some errors $\xi_i$ with only (roughly) a second moment. The LSE fails to be rate-optimal 
	in this model: if the errors are Gaussian (or even bounded), the convergence rate of the $L_2$ 
	loss of the LSE (over VC-subgraph classes) is no worse than $\mathcal{O}_{\mathbf{P}}(\sqrt{\log n/n})$. 
	
	Note that in this model, the localized envelopes are given by $F(\delta)\equiv 1$ for any 
	$\delta>0$ and hence $\pnorm{F(\delta)}{L_2(P)}=1$. Applying Theorem \ref{thm:envelope_rate_upper_bound} 
	with $\gamma=0$ recovers the correct  rate $\mathcal{O}_{\mathbf{P}}(n^{-1/4})$ for the $L_2$ loss of the 
	LSE in this model.	
\end{example}

\begin{example}[Unimodal model]
	Let $\mathcal{F}$ contain all (bounded) unimodal functions on $[0,1]$, i.e. all $f :[0,1]\to \R$ 
	such that there exists some $x^\ast \in [0,1]$ with $f|_{[0,x^\ast]}$ non-decreasing and 
	$f|_{[x^\ast,1]}$ non-increasing. \cite{chatterjee2015adaptive} and \cite{bellec2018sharp}
	considered the performance 
	of the LSE in a fixed-design unimodal Gaussian regression setting, where similar adaptive 
	behavior as in the isotonic case (cf. \cite{zhang2002risk}) is derived. Since the class of (bounded) 
	unimodal functions on $[0,1]$ contains the class of multiple change-points model $\mathcal{F}_1$ 
	as studied in Example \ref{ex:hard_regression}, our results here 
	imply that \emph{the 
	unimodal shape constraint does not inherit} the $L_2$ (or $L_{2,1}$)-robustness property as in the isotonic shape constraint in 
	Example \ref{ex:isotonic_regression}: the worst-case $\mathcal{O}_{\mathbf{P}}(n^{-1/4})$ is 
	attained by the LSE in the unimodal regression model for some errors $\xi_i$'s with (roughly) a second moment.
\end{example}

\section{Shape-restricted regression problems}\label{section:shape_restricted}

As briefly mentioned in the Introduction, it is well-known that in the fixed design 
regression setting, the isotonic least squares estimator (LSE) only requires a second 
moment condition on the errors to enjoy an oracle inequality, cf. \cite{chatterjee2015risk,gao2017minimax,zhang2002risk}. 
The proof techniques used therein rely crucially on (i) some form of representation of the isotonic 
LSE in terms of partial sum processes, and (ii) martingale inequalities. 
Unfortunately, such an explicit representation does not exist beyond the isotonic LSE, 
and hence these techniques do not readily extend to other problems. 

Our goal here is to give a systematic treatment of the robustness properties of shape-restricted 
LSEs in a random design setting, up to error distributions with an $L_{2,1}$ moment. 
The examples we examine are (i) the canonical isotonic and convex regression models, 
and (ii) additive regression models with monotonicity and convexity shape constraints. 
As we will see, the `smallness' of the localized envelopes, along with their special 
geometric properties, play a central role in our approach.

Henceforth, the isotonic (resp. convex) model refers to the regression model based 
on the class of monotonic non-decreasing (resp. convex) functions on $[0,1]$.

\subsection{Prologue: the canonical problems}\label{section:isotonic_reg}

We start by considering the `canonical' problems in the area of shape restricted regression: 
the isotonic and convex regression problems. Note that a generic LSE $\hat{f}_n$ in (\ref{lse}) 
is only well-defined on the design points $X_1,\ldots,X_n$. Our results below hold for the 
\emph{canonical LSEs}: for the isotonic (respectively convex) model, $\hat{f}_n$ is defined to be 
the unique left-continuous piecewise constant (resp. linear) function on $[0,1]$ with jumps (respectively kinks) 
at (potentially a subset of)  $\{\hat{f}_n(X_i)\}_{i=1}^n$.

Some further notation: let $\mathcal{M}_m\equiv \mathcal{M}_m([0,1])$ (respectively 
$\mathcal{C}_m\equiv \mathcal{C}_m([0,1])$) be the class of all non-decreasing piecewise 
constant functions (respectively convex piecewise linear functions) on $[0,1]$ with at most $m$ pieces. 
Let $P$ denote the uniform distribution on $[0,1]$ for simplicity of exposition.

\begin{theorem}\label{thm:isotonic_reg}
Consider the regression model (\ref{regression_model}). 
Let $\mathcal{F}$ be 
either the isotonic or convex model. Suppose that $\pnorm{f_0}{\infty}<\infty$, 
and the errors are i.i.d. mean-zero with $\pnorm{\xi_1}{2,1}<\infty$. 
Then for any $\delta \in (0,1)$, there exists $c\equiv c(\delta,\pnorm{\xi}{2,1},\pnorm{f_0}{\infty},\mathcal{F})>0$ 
such that with probability $1-\delta$, the canonical LSE $\hat{f}_n$ defined above satisfies
\begin{align*}
\pnorm{\hat{f}_n-f_0^\ast}{L_2(P)}^2 \leq c \inf_{m \in \N}  
\left(\inf_{f_m \in \mathcal{G}_m}\pnorm{f_m-f_0^\ast}{L_2(P)}^2+\frac{m}{n}\cdot \log^{2} n \right),
\end{align*}
where $f_0^\ast = \argmin_{g \in \mathcal{F}\cap L_2(P)} \pnorm{f_0-g}{L_2(P)}$, 
and $\mathcal{G}_m=\mathcal{M}_m$ for the isotonic model and $\mathcal{G}_m=\mathcal{C}_m$ 
for the convex model. 
\end{theorem}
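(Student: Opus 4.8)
The plan is to reduce to a uniformly bounded model and then run a self-bounding argument (``basic inequality $\to$ localized empirical/multiplier process $\to$ fixed point'') anchored at $f_0^\ast$ rather than at a generic $f_m$. First, by Lemma \ref{lem:uniform_bound_iso} in the isotonic case and its convex analogue (the boundedness-at-the-boundary result referred to in the introduction), I would show that off an event of probability $\le\delta/2$ the canonical LSE satisfies $\pnorm{\hat f_n}{\infty}\le M$ for a suitable $M=M(\pnorm{f_0}{\infty},\mathcal F,\delta)$; since the $L_2(P)$-projection of a bounded function onto the monotone cone is bounded by $\pnorm{f_0}{\infty}$ (and the convex projection is likewise bounded, once the endpoint behaviour is controlled), both $\hat f_n$ and $f_0^\ast$ then lie in $\mathcal F^{(M)}:=\{f\in\mathcal F:\pnorm{f}{\infty}\le M\}\subset\mathcal F\cap L_2(P)$. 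The key algebraic step is the normal-cone inequality: $(\hat f_n(X_i))_i$ is the $\mathbb{P}_n$-projection of $(Y_i)_i$ onto the closed convex cone $\{(f(X_i))_i:f\in\mathcal F\}$, so $\mathbb{P}_n(Y-\hat f_n)(f_0^\ast-\hat f_n)\le 0$; writing $Y_i=f_0^\ast(X_i)+w(X_i)+\xi_i$ with $w:=f_0-f_0^\ast$, rearranging, and replacing $\mathbb P_n$ by $P$ in the quadratic term, one obtains
\[
\pnorm{\hat f_n-f_0^\ast}{L_2(P)}^2\;\le\;\bigabs{(\mathbb{P}_n-P)(\hat f_n-f_0^\ast)^2}+\bigabs{(\mathbb{P}_n-P)\bigl(w\,(\hat f_n-f_0^\ast)\bigr)}+\mathbb{P}_n\bigl[\xi\,(\hat f_n-f_0^\ast)\bigr],
\]
where I used the population projection property $P\,w\,(\hat f_n-f_0^\ast)=\iprod{f_0-f_0^\ast}{\hat f_n-f_0^\ast}_{L_2(P)}\le 0$ (valid because $\hat f_n\in\mathcal F\cap L_2(P)$). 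The point of centering at $f_0^\ast$ is that the misspecification bias $w$ then disappears at first order, so the analysis reduces to that of the well-specified problem.

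Writing $r:=\pnorm{\hat f_n-f_0^\ast}{L_2(P)}$ (which is $O(1)$ on the good event), each of the three terms is dominated by the supremum of an empirical, respectively multiplier, process over the localized class $\mathcal F_0(r):=\{f-f_0^\ast:f\in\mathcal F^{(M)},\ \pnorm{f-f_0^\ast}{L_2(P)}\le r\}$. I would control all three using: (i) the \emph{small localized envelope} of the model --- the envelope $F_0(\delta)$ of $\mathcal F_0(\delta)$ is of the $\delta(x^{-1/2}\vee(1-x)^{-1/2})\wedge 2M$ type in the isotonic case (Example \ref{ex:isotonic_regression}) and has analogous near-linear growth $\pnorm{F_0(\delta)}{L_2(P)}\lesssim\delta\log^\tau(1/\delta)$ in the convex case --- together with the VC-major entropy bound of Lemma \ref{lem:subset_VC}, which is what makes Theorem \ref{thm:envelope_rate_upper_bound}-type estimates applicable; and (ii) for the multiplier term, the multiplier inequality of \cite{han2017sharp}, which bounds $\mathbb E\sup_{g\in\mathcal F_0(r)}\bigabs{\mathbb{P}_n\xi g}$ by $\pnorm{\xi_1}{2,1}$ times a maximum over prefixes $k\le n$ of rescaled Rademacher complexities $\tfrac{\sqrt k}{n}\mathbb E\sup_{g\in\mathcal F_0(r)}\bigabs{\sum_{i=1}^k\epsilon_i g(X_i)}$; this is precisely what allows the argument to run under only an $L_{2,1}$ moment, where ordinary symmetrization degrades. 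For the two $(\mathbb P_n-P)$-terms, Talagrand's inequality (with variance proxy $\lesssim r^2$ from the $L_2(P)$-localization, and a contraction step $g\mapsto g^2$ for the quadratic term) upgrades the expectation bounds to the required probability-$(1-\delta)$ statements.

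The $\inf_m(\cdot)$ in the conclusion is produced at the level of the localized Rademacher complexity: for each $m$ and each $f_m\in\mathcal G_m$, I would split $g=(f-f_m)+(f_m-f_0^\ast)$, bound the single term $\mathbb E\bigabs{\mathbb{P}_n\epsilon(f_m-f_0^\ast)}\lesssim\pnorm{f_m-f_0^\ast}{L_2(P)}/\sqrt n$, and exploit the piecewise structure of $f_m$ (constant, resp. affine, on each of its $m$ pieces): on each piece $\{f-f_m\}$ is a rescaling of bounded monotone, resp. convex, functions on an interval, and summing the $m$ within-piece contributions via Cauchy--Schwarz gives the random-design analogue of the local complexity bounds of \cite{chatterjee2015risk,bellec2018sharp}, namely a bound of order $\inf_m\bigl(\sqrt{m\log(en/m)}\cdot r+\sqrt n\,\rho_m\bigr)$ with $\rho_m:=\inf_{f_m\in\mathcal G_m}\pnorm{f_m-f_0^\ast}{L_2(P)}$ (the $\sqrt m$ reflecting the statistical dimension of $m$-piece approximations, the logarithm the within-piece fluctuations). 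Feeding this back through Step~2 and using $\sqrt{m\log(en/m)/n}\cdot r\le\tfrac14 r^2+C\,m\log(en/m)/n$ and $\rho_m/\sqrt n\le\rho_m^2+1/(4n)$ yields
\[
\pnorm{\hat f_n-f_0^\ast}{L_2(P)}^2\;\le\;\tfrac12\pnorm{\hat f_n-f_0^\ast}{L_2(P)}^2+C\inf_{m\in\N}\Bigl(\inf_{f_m\in\mathcal G_m}\pnorm{f_m-f_0^\ast}{L_2(P)}^2+\frac{m}{n}\log^2 n\Bigr),
\]
where the two logarithmic factors trace to the VC-major (rather than VC-subgraph) entropy of the monotone/convex classes and to the piece-counting $\log(en/m)\lesssim\log n$. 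A standard slicing argument over dyadic ranges of $\pnorm{\hat f_n-f_0^\ast}{L_2(P)}$ makes the controlling event uniform in the random value of $r$, and solving the fixed-point inequality gives the oracle bound; the isotonic and convex cases follow the identical skeleton.

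The main obstacle is twofold and is exactly what is new relative to the Gaussian fixed-design literature: first, establishing the random-design local complexity bound for the monotone and, especially, convex cones in the $L_2(P)$ metric --- the piecewise-constant/piecewise-linear approximation geometry must be transported from $\ell_2^n$ to $L_2(P)$ and interlocked with the small-envelope control; and second, making the multiplier inequality of \cite{han2017sharp} interact cleanly with these localized classes so that only $\pnorm{\xi_1}{2,1}<\infty$ is required (note that Theorem \ref{thm:general_rate} alone would demand a third moment here, since $\alpha=1$). For the convex model there is the further prerequisite of controlling the canonical LSE near the endpoints of $[0,1]$ --- the boundedness/boundary problem the paper resolves --- which underpins the reduction to $\mathcal F^{(M)}$ and the boundedness of $f_0^\ast$ and $w$.
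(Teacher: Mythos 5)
Your plan tracks the paper's proof essentially step for step: stochastic boundedness (Lemma \ref{lem:uniform_bound_iso}) to reduce to a uniformly bounded subclass, a basic inequality centered at $f_0^\ast$ with the population first-order condition $P\,(f_0-f_0^\ast)(\hat f_n-f_0^\ast)\le0$ killing the misspecification bias (this is exactly Proposition \ref{prop:lse}/\ref{prop:additive_lse}), then control of the localized multiplier and Rademacher processes via the change-of-center to $f_m\in\mathcal G_m$, the piecewise split, the VC-major entropy of Lemma \ref{lem:subset_VC}, the $x^{-1/2}\vee(1-x)^{-1/2}$ envelope and the $L_{2,1}$ multiplier inequality of \cite{han2017sharp} (this is Lemma \ref{lem:l_2_shape_constraint_mep_est}), closed by a peeling/fixed-point argument (Theorem \ref{thm:l_2_shape_constraint_oracle_ineq}). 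The only small inaccuracy is in the attribution of the two logarithms --- in the paper one factor of $\sqrt{\log n}$ comes from the VC-major entropy integral and the other from $\sum_{j}\log(x_{j+1}/x_j)\lesssim\log(1/\tilde\delta_n)$ when integrating the envelope $\delta/\sqrt{x}$ over the partition pieces, rather than from a $\log(en/m)$ piece-count --- but this does not affect the correctness of the argument.
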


The isotonic regression problem, included here mainly for sake of later development 
in the additive model, is a benchmark example in the family of shape-restricted regression 
problems. Even in this simplest case, the above oracle inequality in $L_2(P)$ loss seems 
new\footnote{An oracle inequality in $L_2(\Prob_n)$ loss follows immediately from  
\cite{chatterjee2015risk} (with a second moment assumption on the errors) since  the 
monotone cone does \emph{not} change with the design points. See \cite{han2017isotonic} 
for different techniques in the multivariate isotonic regression problem when the errors are Gaussian. }.

For the more interesting convex regression problem, our oracle inequality here confirms for the 
first time both the adaptation and robustness properties of the convex LSE up to error distributions 
with an $L_{2,1}$ moment. Previous oracle inequalities for the convex LSE exclusively focused 
on the fixed-design setting under a (sub-)Gaussian assumption on the errors \cite{bellec2018sharp,chatterjee2015risk}; 
see also Section 3 of \cite{guntuboyina2017nonparametric} for a review. 
	
\begin{remark}
Two technical comments on  the formulation of the oracle inequality in Theorem \ref{thm:isotonic_reg}:
\begin{enumerate}
\item The oracle inequality holds for the projection $f_0^\ast$ of $f_0$ to $\mathcal{F}\cap L_2(P)$ 
and hence allows for model mis-specification: the only assumption on $f_0$ is boundedness: 
$\pnorm{f_0}{\infty}<\infty$. The same comment also applies to the oracle inequality in the additive model below.
\item The oracle inequality cannot be strengthened to an expectation, in view of a counterexample 
discovered in \cite{balazs2015near} in the convex model: the convex LSE $\hat{f}_n$ has infinite 
$L_2$ risk in estimating $f_0=0$ even if the errors are bounded: $\E \pnorm{\hat{f}_n-0}{L_2(P)}=\infty$.
\end{enumerate}
\end{remark}

\definecolor{color1}{RGB}{102,0,51}
\definecolor{color2}{RGB}{153,0,76}
\definecolor{color3}{RGB}{204,0,102}
\definecolor{color4}{RGB}{255,0,127}
\definecolor{color5}{RGB}{255,51,153}

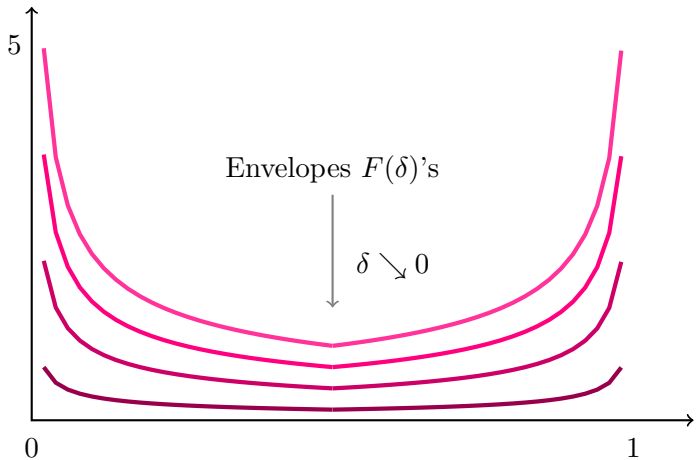
\begin{figure}
	\begin{tikzpicture}[xscale=8,yscale=1]
	\draw [thick, <->] (0,5.5) -- (0,0) -- (1.1,0);
	\draw[color5, ultra thick, domain=0.02:0.5] plot (\x, {0.7/sqrt(\x)} );
	\draw[color5, ultra thick, domain=0.5:0.98] plot (\x, {0.7/sqrt(1-\x)} );
	\draw[color4, ultra thick, domain=0.02:0.5] plot (\x, {0.5/sqrt(\x)} );
	\draw[color4, ultra thick, domain=0.5:0.98] plot (\x, {0.5/sqrt(1-\x)} );
	\draw[color3, ultra thick, domain=0.02:0.5] plot (\x, {0.3/sqrt(\x)} );
	\draw[color3, ultra thick, domain=0.5:0.98] plot (\x, {0.3/sqrt(1-\x)} );
	\draw[color2, ultra thick, domain=0.02:0.5] plot (\x, {0.1/sqrt(\x)} );
	\draw[color2, ultra thick, domain=0.5:0.98] plot (\x, {0.1/sqrt(1-\x)} );
	\node [above] at (0.6,1.75) {$\delta \searrow 0$};
	\node [above] at (0.5,3) {Envelopes $F(\delta)$'s};
	\node [below] at (1,-0.1) {1};
	\node [below] at (0,-0.1) {0};
	\node [left] at (0,5) {5};
	\draw [gray, thick, ->] (0.5,3) -- (0.5,1.5);
	\end{tikzpicture}
	\caption{Envelopes for isotonic model with $c=1$ in (\ref{eqn:envelope}). From top to bottom: $\delta=0.7, 0.5, 0.3, 0.1$. }
	\label{fig: envelope_fcn}
\end{figure}

\subsubsection{Proof strategy of Theorem \ref{thm:isotonic_reg}}
The proof of Theorem \ref{thm:isotonic_reg} contains two major steps.
\begin{enumerate}
\item[\textbf{(Step 1)}] We first localize the shape-restricted LSEs in $L_\infty$ norm. 
This step requires some understanding of the boundary behavior of the shape-restricted 
LSEs under a second moment assumption on the errors. The case for isotonic regression 
is relatively straightforward, while the case for convex regression is much more difficult. 
Here we resolve this issue in Lemma \ref{lem:uniform_bound_iso}. 
\item[\textbf{(Step 2)}] After the localization in Step 1, the problem essentially reduces to 
controlling a multiplier empirical process of the form
\begin{align}\label{multiplier_ep_lse}
\E \sup_{f \in \mathcal{F}: f-f_0^\ast \in L_2(\delta_n)\cap L_\infty(B) }\biggabs{\frac{1}{\sqrt{n}}\sum_{i=1}^{n} \xi_i (f-f_0^\ast)(X_i)}.
\end{align}
A sharp bound for (\ref{multiplier_ep_lse}) is inspired by the observation in 
Example \ref{ex:isotonic_regression}, where the (untruncated) localized envelopes of the isotonic model take the form
\begin{align}\label{eqn:envelope}
F(\delta)(x)\equiv c\delta \cdot \big(x^{-1/2}\vee (1-x)^{-1/2}\big)
\end{align}
for some absolute constant $c>0$. The envelopes for the convex model also take the same form 
(\ref{eqn:envelope}), cf. Lemma \ref{lem:convex_1dim_envelope}. On the other hand, the 
localized envelopes (\ref{eqn:envelope}) are centered at $0$, while the multiplier empirical 
process (\ref{multiplier_ep_lse}) in question is centered at $f_0^\ast$. By exploiting the exact 
form of (\ref{eqn:envelope}), we perform a `change-of-center argument' on (\ref{multiplier_ep_lse}) 
by shifting $f_0^\ast$ to an arbitrary piecewise simple signal 
$f_m \in \mathcal{G}_m \cap L_\infty(\pnorm{f_0^\ast}{\infty})$, cf. Lemma \ref{lem:l_2_shape_constraint_mep_est}, 
thereby reducing the control of (\ref{multiplier_ep_lse}) to control of several multiplier 
empirical processes centered at $0$. The effect of the heavy-tailed $\xi_i$'s is then accounted for, 
via the multiplier inequality developed in \cite{han2017sharp}, by a uniform estimate for the 
corresponding empirical processes in terms of the $L_2$ size of the localized envelopes (\ref{eqn:envelope}). 
\end{enumerate}

\begin{remark}
Currently our oracle inequality comes with a $\log^{2} n$ term. It is known in (i) the fixed 
design isotonic model with a second moment assumption on the errors, and (ii) the fixed 
design convex model with 
sub-Gaussian errors, that the power of the logarithmic factor 
can be reduced to $1$. The additional logarithmic factor in Theorem \ref{thm:isotonic_reg} 
occurs due to the use of VC-major property for the isotonic and convex models in the random 
design setting: the entropy estimate of bounded VC-major classes comes with logarithmic 
factors that involve the $L_2$ size of the envelopes (cf. Lemma \ref{lem:subset_VC}). 
\end{remark}

\subsection{Additive regression model with shape constraints}\label{section:additive_isotonic_reg}

Consider fitting $(x,z)\mapsto \phi_0(x,z)$, the conditional mean of the regression model
\begin{align}\label{additive_model}
Y_i = \phi_0(X_i,Z_i)+\xi_i,\quad 1\leq i\leq n,
\end{align}
by additive models of the form $\{(x,z)\mapsto f(x)+h(z)\}_{f \in \mathcal{F}, h\in \mathcal{H}}$, 
where $\mathcal{F},\mathcal{H}$ are two function classes on $[0,1]$. To capture the mathematical 
essence of the problem, we assume that the covariates $\{(X_i,Z_i)\}_{i=1}^n$ are i.i.d. from 
the uniform law $P$ on $[0,1]^2$ and are independent of the errors $\{\xi_i\}$. 
We use $P_X,P_Z$ to denote the marginal distributions of $P$. For identifiability 
we assume that $\mathcal{H}$ is centered.

Additive models of the type have a long history; see e.g.  \cite{hastie1990generalized,stone1985additive}. 
When the additive model is well specified (i.e. $\phi_0(x,z)=f_0(x)+h_0(z)$ with $f_0 \in \mathcal{F}, h_0\in \mathcal{H}$), 
and the nonparametric components enjoy smoothness assumptions, standard methods such as iterative backfitting, 
e.g. \cite{mammen1999existence} and penalized LSE (smooth spline), e.g. \cite{wahba1990spline}, 
can be used to estimate $f_0$ and $h_0$. 

Instead of computational issues, we will be interested here in certain structural aspects of the additive LSE $\hat{f}_n$ defined via:
\begin{align}\label{lse_additive_model}
(\hat{f}_n,\hat{h}_n) \in \argmin_{(f,h)\in \mathcal{F}\times \mathcal{H}} \sum_{i=1}^n\big(Y_i-f(X_i)-h(Z_i)\big)^2.
\end{align}
Since the true regression function $\phi_0$ need not have an additive structure, 
one may naturally expect that $\hat{f}_n$ and $\hat{h}_n$ estimate the marginal 
$L_2$ projections $x\mapsto f_0(x)\equiv P_Z \phi_0(x,Z)$ and 
$z \mapsto h_0(z)\equiv P_X \phi_0(X,z) - P \phi_0$ (cf. Appendix 4, page 439 of \cite{bickel1998efficient}). 
Our primary \emph{structural question} on the behavior of the additive LSE $\hat{f}_n$ 
concerns the situation in which the model $\mathcal{F}$ involves shape constraints:
\begin{question}\label{question:additive_model_structure}
Does the additive LSE $\hat{f}_n$ over the shape constrained model $\mathcal{F}$ 
enjoy similar robustness and adaptation properties as in the univariate case 
(treated in Theorem \ref{thm:isotonic_reg})?
\end{question}

The next theorem gives an affirmative answer to Question \ref{question:additive_model_structure}.

\begin{theorem}\label{thm:additive_isotonic_reg}
Suppose that $(X_i, Z_i, Y_i )$, $i=1,\ldots , n$, are i.i.d. with values 
in $[0,1]\times [0,1] \times \R$ and satisfy 
(\ref{additive_model}) where $\pnorm{\phi_0}{\infty} < \infty$, and the errors $\{ \xi_i \}$ are i.i.d. 
mean zero with $\pnorm{\xi_1}{2,1}<\infty$. Let $\mathcal{F}$ be either the isotonic or convex model. 
Further suppose that $\mathcal{H}\subset L_\infty(2\pnorm{\phi_0}{\infty})$ 
satisfies the following $L_\infty$ covering bound: for some $\gamma \in (0,2)$
\begin{align}\label{ineq:cond_H2_1}
\log \mathcal{N}(\epsilon, \mathcal{H},L_\infty)\lesssim \epsilon^{-\gamma},\textrm{ for all }\epsilon \in (0,1).
\end{align}
Then for any $\delta \in (0,1)$, there exists 
$c\equiv c(\delta,\pnorm{\xi}{2,1},\pnorm{\phi_0}{\infty}, \mathcal{F}, \mathcal{H})>0$ such that with probability 
$1-\delta$, the canonical LSE $\hat{f}_n$ in (\ref{lse_additive_model}) satisfies
\begin{align*}
\pnorm{\hat{f}_n-f_0^\ast}{L_2(P)}^2 \leq c \inf_{m \in \N}  \left(\inf_{f_m \in \mathcal{G}_m}
\pnorm{f_m-f_0^\ast}{L_2(P)}^2+\frac{m}{n}\cdot \log^{2} n \right),
\end{align*}
where $f_0^\ast = \argmin_{g \in \mathcal{F}\cap L_2(P)} \pnorm{f_0-g}{L_2(P)}$ with 
$f_0=P_Z\phi_0(\cdot,Z)$, and $\mathcal{G}_m=\mathcal{M}_m$ for the isotonic model and 
$\mathcal{G}_m=\mathcal{C}_m$ for the convex model.
\end{theorem}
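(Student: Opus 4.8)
The plan is to reduce the additive problem to the univariate problem treated in Theorem \ref{thm:isotonic_reg} by absorbing the $\mathcal{H}$-component into the error, and then re-run the two-step strategy (localization + multiplier empirical process control) of that theorem with the enlarged "noise." Concretely, write $\phi_0(x,z)=f_0(x)+r_0(x,z)$ where $f_0(x)=P_Z\phi_0(x,Z)$ and $r_0$ is the remainder satisfying $P_Z r_0(x,Z)=0$ for every $x$; then the observation model can be rewritten as $Y_i = f_0(X_i) + \eta_i$ with the effective error $\eta_i = \xi_i + r_0(X_i,Z_i)$. The key point is that $r_0$ is bounded by $2\pnorm{\phi_0}{\infty}$ and has conditional mean zero given $X_i$, so $\{\eta_i\}$ is still a mean-zero sequence with finite $L_{2,1}$ norm — but it is \emph{not} independent of $X_i$, and more importantly $\hat h_n$ (the fitted $\mathcal{H}$-component) is data-dependent, so the true decoupling requires handling the process indexed jointly over $\mathcal{F}$ and $\mathcal{H}$.

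The first step is the $L_\infty$-localization of $\hat f_n$. I would argue that since $\mathcal{H}\subset L_\infty(2\pnorm{\phi_0}{\infty})$ and $\pnorm{\phi_0}{\infty}<\infty$, the basic inequality for the joint LSE forces $\hat h_n$ to stay bounded, and then the univariate boundary analysis of Lemma \ref{lem:uniform_bound_iso} (for isotonic and convex cones) applies to control $\pnorm{\hat f_n}{\infty}=\mathcal{O}_{\mathbf{P}}(1)$; the residuals $Y_i - \hat h_n(Z_i)$ play the role of the "responses" in the univariate isotonic/convex fit, and one only needs that their $L_\infty$-perturbation of the true responses is $\mathcal{O}_{\mathbf{P}}(1)$, which is guaranteed by the uniform bound on $\mathcal{H}$. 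The second step is the core empirical-process estimate: after localization one must bound
\begin{align*}
\E \sup_{\substack{f\in\mathcal{F}: f-f_0^\ast\in L_2(\delta_n)\cap L_\infty(B)\\ h\in\mathcal{H}}}\biggabs{\frac{1}{\sqrt n}\sum_{i=1}^n \xi_i\big((f-f_0^\ast)(X_i)+(h-h_0)(Z_i)\big)}.
\end{align*}
The $\mathcal{H}$-part splits off: its contribution is $\E\sup_{h\in\mathcal{H}}\abs{n^{-1/2}\sum \xi_i (h-h_0)(Z_i)}$, which by the multiplier inequality of \cite{han2017sharp} together with the $L_\infty$ entropy bound (\ref{ineq:cond_H2_1}) (which dominates the $L_2(P_Z)$ entropy) is controlled at rate $n^{-1/2}$ times a finite Dudley-type integral, i.e. it is $\mathcal{O}(\sqrt n)$ and so contributes only a lower-order $\mathcal{O}(n^{-1/2})$-type term to the final bound, absorbable into the $m/n\cdot\log^2 n$ term. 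The $\mathcal{F}$-part is exactly (\ref{multiplier_ep_lse}), so the change-of-center argument (Lemma \ref{lem:l_2_shape_constraint_mep_est}) and the localized-envelope estimate (\ref{eqn:envelope}) for the isotonic/convex cones go through verbatim.

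The main obstacle is the interaction between the two components — in particular verifying that the $\mathcal{H}$-entropy is mild enough that the \emph{cross terms} and the coupling of $\hat f_n$ with $\hat h_n$ do not spoil the adaptive $m/n$ rate. The cleanest route is probably: (i) show via a "peeling"/basic-inequality argument that $\pnorm{\hat f_n - f_0^\ast}{L_2(P)}^2 + \pnorm{\hat h_n - h_0}{L_2(P_Z)}^2$ is bounded by a sum of the $\mathcal{F}$-oracle term and an $\mathcal{H}$-error term; (ii) bound the $\mathcal{H}$-error term by the fixed rate $n^{-2/(2+\gamma)}$ coming from (\ref{ineq:cond_H2_1}) under the $L_{2,1}$ moment assumption (this is exactly where one invokes the fact that a $\gamma<2$ uniform-entropy class with $L_{2,1}$ errors attains its Gaussian rate, cf. Theorem \ref{thm:general_rate}); and (iii) observe that since $\gamma<2$ this rate is faster than $n^{-1/2}$ only in the wrong direction — actually it is $o(1)$ but slower than $n^{-1/2}\log n$ unless $\gamma<1$, so one should instead be careful to show that the $\mathcal{H}$-error enters the \emph{bound for the $\mathcal{F}$-component} only multiplicatively through a product with $\delta_n$ (via Cauchy–Schwarz on the cross term $P(\hat f_n-f_0^\ast)(\hat h_n - h_0)$, which vanishes in expectation since $P_Z(\hat h_n - h_0) $ need not vanish but the population cross term $P_X(\hat f_n - f_0^\ast)\cdot 0$... ) — this orthogonality (marginal $L_2$ projections are $L_2$-orthogonal to the complementary direction) is what ultimately decouples the two rates and is the delicate point to get exactly right. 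Once that orthogonality is pinned down, the remaining analysis is a routine repetition of the proof of Theorem \ref{thm:isotonic_reg}.
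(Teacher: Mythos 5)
The proposal has a genuine gap in what you call the ``core empirical-process estimate.''  You write down the process
\begin{align*}
\E \sup_{\substack{f: f-f_0^\ast\in L_2(\delta_n)\cap L_\infty(B)\\ h\in\mathcal{H}}}\biggabs{\frac{1}{\sqrt n}\sum_{i=1}^n \xi_i\big((f-f_0^\ast)(X_i)+(h-h_0)(Z_i)\big)}
\end{align*}
and claim the $\mathcal{H}$-part ``splits off.''  That additive structure inside the absolute value is not what the basic inequality produces.  The paper's reduction (Proposition \ref{prop:additive_lse}) compares the LSE loss at $(\hat f_n,\hat h_n)$ against the loss at the \emph{same} $\hat h_n$ but $f$-component $f_0^\ast$; after expanding, the $\mathcal{H}$-dependence enters through the cross term $(\Prob_n-P)\big[(f-f_0^\ast)(X)\cdot(h-(\phi_0-f_0))(X,Z)\big]$, a \emph{product} of the two, taken uniformly over localized $f$ and all of $\mathcal{H}$.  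That product term is the difficult object, and it does not split into a sum of a $\xi$--$\mathcal{F}$ process and a $\xi$--$\mathcal{H}$ process.  Bounding it requires a separate entropy estimate for the tensor class $\mathcal{F}'\otimes(\mathcal{H}-(\phi_0-f_0))$ (Lemmas \ref{lem:entropy_ind}, \ref{lem:cond_H2_1}) combined with the change-of-center argument, culminating in Lemma \ref{lem:additive_mep_est}; none of this appears in your proposal.

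Your final paragraph does gesture at the right structural fact --- that the \emph{population} cross term $P\big[(f_0^\ast-\hat f_n)(X)(\phi_0-f_0-\hat h_n)(X,Z)\big]$ vanishes, since $P[\phi_0(X,Z)\mid X]=f_0(X)$ and every $h\in\mathcal{H}$ is $P_Z$-centered --- but the argument trails off mid-sentence and you never actually use it to set up the correct empirical fluctuation to control.  In addition, your step (i)--(ii), which attempts to bound $\pnorm{\hat h_n-h_0}{L_2(P_Z)}^2$ by a fixed $n^{-2/(2+\gamma)}$ rate, cuts against the statement being proved: that rate is slower than $m\log^2 n/n$ for small $m$, so coupling the two rates would destroy the adaptive bound.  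The paper's point is precisely that no localization of $h$ is needed --- the supremum over $\mathcal{H}$ in the cross-term process is taken over the whole class --- so the adaptation of $\hat f_n$ is insulated from however badly $\hat h_n$ behaves.  That insight, the exact form of the cross-term process, and the tensor-class entropy bound are the three missing pieces; once they are in place the rest does reduce to the machinery of Theorem \ref{thm:isotonic_reg}, as you say.  The $L_\infty$-localization step you sketch is essentially correct and matches Lemma \ref{lem:uniform_bound_additive_iso}.
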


There is very limited theoretical understanding of the properties of shape-restricted estimators when additive models are used. \cite{meyer2013semi} investigated identifiability 
issue for the additive LSE in the fixed design setting. \cite{mammen2007additive} considered 
\emph{pointwise} performance of the LSE where both $\mathcal{F}$ and $\mathcal{H}$ are 
monotonic with errors admitting exponential moments. \cite{cheng2009semiparametric} 
gives an extension to a semiparametric setting assuming the same moment condition on 
the errors, still considering pointwise performance of the LSEs for the isotonic components. 
\cite{chen2016generalized} proved consistency of the MLEs for a generalized class of additive 
and index models with shape constraints, without rate considerations. A common feature of all 
these works is that the model is required to be well-specified.

To the best knowledge of the authors, Theorem \ref{thm:additive_isotonic_reg} is the first oracle 
inequality for shape-restricted LSEs in regression using an additive model, and moreover, 
allowing for model mis-specification: not only the regression function class $\mathcal{F}$ can 
be mis-specified, but the additive model itself may also be mis-specified.  Our result here therefore 
gives a strong positive answer to Question \ref{question:additive_model_structure}: both the adaptation 
and robustness properties of additive shape-restricted LSEs can be preserved in estimating the shape 
constrained proxy of the marginal $L_2$ projection of the true regression function,  up to error 
distributions with an $L_{2,1}$ moment, \emph{essentially regardless of whether or not the 
additive structure is correctly specified}.  

\subsubsection{Examples under correct specification of the additive structure}
Now we consider the important situation when $\phi_0$ has an additive structure:
\begin{align*}
\phi_0(x,z)\equiv f_0(x)+ h_0(z).
\end{align*}
In such a scenario, our result here is related to the recent work \cite{van2015penalized}, 
who asserted that the rate optimality nature of the (penalized) LSE over $\mathcal{F}$ 
in the Gaussian regression setting can be preserved regardless of the smoothness 
level of $\mathcal{H}$. Our Theorem \ref{thm:additive_isotonic_reg} reveals a further 
structural property of the LSEs: the robustness and adaptation merits due to shape 
constraints can also be preserved, regardless of the choice of $\mathcal{H}$ under the entropy condition (\ref{ineq:cond_H2_1}).

To further illustrate this point, we consider some examples. 
\begin{itemize}
\item (\emph{Parametric model}) $\mathcal{H}\equiv \{f_\beta(z)\equiv \beta (z-1/2): \beta \in [-1,1]\}$. 
In this case (\ref{additive_model}) becomes the semiparametric partially linear model. 
\item (\emph{Smooth model}) $\mathcal{H}$ is the class of centered uniformly bounded 
$\alpha$-H\"older ($\alpha>1/2$) continuous functions on $[0,1]$ with uniformly bounded 
derivatives (cf. Theorem 2.7.1 of \cite{van1996weak}). 
\item (\emph{Shape constrained model}) $\mathcal{H}$ is the class of centered uniformly  
Lipschitz convex functions on $[0,1]$ (cf. Corollary 2.7.10 of \cite{van1996weak}). 
\end{itemize}

\subsubsection{Proof strategy of Theorem \ref{thm:additive_isotonic_reg}}

The basic strategy in our proof of Theorem \ref{thm:additive_isotonic_reg} is similar to that of 
Theorem \ref{thm:isotonic_reg}. First, we need to localize the LSEs in $L_\infty$ norm 
under a second moment assumption on the errors and $P_Z H^2<\infty$, cf. Lemma 
\ref{lem:uniform_bound_additive_iso}. Next, in addition to the multiplier empirical process 
(\ref{multiplier_ep_lse}), the major additional empirical process we need to control is
\begin{align}\label{multiplier_ep_lse_additive}
&\E \sup_{ \substack{f \in \mathcal{F}: f-f_0^\ast \in L_2(\delta_n)\cap L_\infty(B)\\ h \in \mathcal{H}} }
    \biggabs{\frac{1}{\sqrt{n}}\sum_{i=1}^{n} \epsilon_i (f-f_0^\ast)(X_i)(h-(\phi_0-f_0))(X_i,Z_i)}.
\end{align}
where the $\epsilon_i$'s are i.i.d. Rademacher random variables. 
One notable feature in (\ref{multiplier_ep_lse_additive}) is that the supremum over $\mathcal{H}$ 
need \emph{not} be localized when the interest is in the behavior of $\hat{f}_n$, cf. Proposition 
\ref{prop:additive_lse}. In other words, no apriori information on the behavior of $\hat{h}_n$ 
(other than the assumption (\ref{ineq:cond_H2_1}))  is needed in order to understand the behavior of $\hat{f}_n$. 

The entropy condition (\ref{ineq:cond_H2_1}) serves as a sufficient condition for a sharp estimate for 
(\ref{multiplier_ep_lse_additive}) (and thereby for the oracle inequality in Theorem \ref{thm:additive_isotonic_reg}), 
but is apparently not necessary; we make such a choice here to cover the above common examples. 
A case-by-case study is possible as long as (\ref{multiplier_ep_lse_additive}) can be well-controlled. 
For instance, it is not hard to verify a similar bound for (\ref{multiplier_ep_lse_additive}) as in 
Lemma \ref{lem:additive_mep_est} (and hence the oracle inequality for shape-restricted LSEs 
$\hat{f}_n$) when the additive structure is correctly specified, and $\mathcal{H}$ is the class of 
centered indicator functions over closed intervals on $[0,1]$ and $h_0=0$ (note that this class 
fails to satisfy (\ref{ineq:cond_H2_1}) since $\mathcal{H}$ is not totally bounded in $L_\infty$). 
This is a difficult case: although the $L_2$ loss of the LSE $\hat{h}_n$ is known to converge 
at a worst-case rate $\mathcal{O}_{\mathbf{P}}(n^{-1/4})$ (cf. Example \ref{ex:hard_regression}), 
Theorem \ref{thm:additive_isotonic_reg} tells us that the bad behavior of $\hat{h}_n$ has no 
effect on the good (robust and adaptive) performance of $\hat{f}_n$, at least under reasonable 
assumption on the distribution of the covariates $(X,Z)$.

\section{Proofs of the main results}\label{section:proof_main_results}

In this section we outline the main steps in proving the main results of the paper, namely:

\begin{enumerate}
	\item Theorems \ref{thm:envelope_rate_upper_bound} and \ref{thm:envelope_rate_lower_bound} 
	characterizing the geometric feature of the model that determines the actual 
	convergence rate of the $L_2$ loss of the least squares estimator, and
	\item Theorems \ref{thm:isotonic_reg} and \ref{thm:additive_isotonic_reg} 
	highlighting oracle inequalities in shape restricted regression models with a $L_{2,1}$ moment assumption on the errors.
\end{enumerate}

Proofs of many technical intermediate results will be deferred to Section \ref{section:proof_remaining}.

\subsection{Preliminaries}

In this subsection we collect the empirical process tools that will be needed in the proofs to follow. 
Our first ingredient is a sharp multiplier inequality proved in \cite{han2017sharp}. 

\begin{lemma}[Theorem 1 in \cite{han2017sharp}]\label{lem:multiplier_ineq}
Suppose that $\xi_1,\ldots,\xi_n$ are i.i.d. mean-zero random variables independent of i.i.d. 
$X_1,\ldots,X_n$. Let $\mathcal{F}_1\supset \cdots \supset \mathcal{F}_n$ be a 
non-increasing sequence of function classes.  Assume further that there exist 
non-decreasing concave functions $\{\psi_n\}:\R_{\geq 0}\to \R_{\geq 0}$ with 
$\psi_n(0)=0$ such that
\begin{align}\label{cond:local_maximal_mulep}
\E \bigg\lVert \sum_{i=1}^k\epsilon_i f(X_i)\bigg\lVert_{\mathcal{F}_k}\leq \psi_n(k)
\end{align}
holds for all $1\leq k\leq n$. Then
\begin{align*}
\E \bigg\lVert \sum_{i=1}^n \xi_i f(X_i)\bigg\lVert_{\mathcal{F}_n}\leq 4\int_0^\infty \psi_n\big(n\cdot \Prob(\abs{\xi_1}>t)\big)\ \d{t}.
\end{align*}
\end{lemma}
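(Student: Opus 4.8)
The plan is to pass from the multiplier process to Rademacher processes by symmetrization, and then to exploit the nested structure $\mathcal{F}_1\supseteq\cdots\supseteq\mathcal{F}_n$ via a layer–cake decomposition of the multipliers according to magnitude. First I would symmetrize: since the $\xi_i$ are mean zero and independent of the $X_i$, the standard desymmetrization inequality for mean–zero multipliers gives
\[
\E\bigpnorm{\sum_{i=1}^n\xi_i f(X_i)}{\mathcal{F}_n}\;\le\;2\,\E\bigpnorm{\sum_{i=1}^n\epsilon_i\xi_i f(X_i)}{\mathcal{F}_n}
\]
(introduce an independent copy of the $\xi_i$, apply Jensen's inequality, and use the triangle inequality for the resulting symmetric multipliers). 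Writing $\epsilon_i\xi_i=\tilde\epsilon_i\abs{\xi_i}$ with $\tilde\epsilon_i:=\epsilon_i\operatorname{sgn}(\xi_i)$, the sequence $(\tilde\epsilon_i)$ is again i.i.d.\ Rademacher and independent of $(\abs{\xi_i},X_i)_{i=1}^n$, so it remains to bound $\E\bigpnorm{\sum_{i=1}^n\tilde\epsilon_i\abs{\xi_i}f(X_i)}{\mathcal{F}_n}$. The two–copy symmetrization already yields the constant $2$; the value $4$ in the statement is a safe round number.

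Next I would write $\abs{\xi_i}=\int_0^\infty\mathbf{1}\{\abs{\xi_i}>t\}\,\d{t}$, interchange the integral with the sum, and apply Minkowski's inequality to get
\[
\E\bigpnorm{\sum_{i=1}^n\tilde\epsilon_i\abs{\xi_i}f(X_i)}{\mathcal{F}_n}\;\le\;\int_0^\infty \E\bigpnorm{\sum_{i=1}^n\tilde\epsilon_i\,\mathbf{1}\{\abs{\xi_i}>t\}\,f(X_i)}{\mathcal{F}_n}\,\d{t}.
\]
For each fixed $t$ the indicators $\mathbf{1}\{\abs{\xi_i}>t\}$ merely pick out the random subset $S_t=\{i:\abs{\xi_i}>t\}$, whose size $N_t=\abs{S_t}$ is $\mathrm{Binomial}\big(n,\Prob(\abs{\xi_1}>t)\big)$ and independent of $(\tilde\epsilon_i,X_i)_i$. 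Conditioning on $S_t$: on $\{\abs{S_t}=k\}$ the inner sum is $\sum_{i\in S_t}\tilde\epsilon_i f(X_i)$; since $k\le n$ the nesting gives $\mathcal{F}_n\subseteq\mathcal{F}_k$, hence $\pnorm{\cdot}{\mathcal{F}_n}\le\pnorm{\cdot}{\mathcal{F}_k}$, and by exchangeability of the i.i.d.\ pairs $(\tilde\epsilon_i,X_i)$ the conditional expectation equals $\E\bigpnorm{\sum_{i=1}^k\tilde\epsilon_i f(X_i)}{\mathcal{F}_k}\le\psi_n(k)$. So the inner expectation is at most $\E\psi_n(N_t)$, and concavity of $\psi_n$ together with Jensen's inequality gives $\E\psi_n(N_t)\le\psi_n(\E N_t)=\psi_n\big(n\,\Prob(\abs{\xi_1}>t)\big)$; here $\psi_n(0)=0$ and monotonicity handle the empty–sum case and the extension to non-integer arguments. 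Substituting this into the integral and collecting the symmetrization constant yields the claim.

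The main obstacle is precisely the passage to the \emph{random} effective sample size $N_t$: one must resist conditioning on the entire vector $(\xi_i)$ and treating it as a fixed multiplier sequence — that is the classical multiplier inequality, which is lossy for heavy–tailed $\xi_i$ — and instead condition only on the magnitude–truncated indicators, using the nesting $\mathcal{F}_1\supseteq\cdots\supseteq\mathcal{F}_n$ as the bridge that lets the hypothesis, stated for sums of length $k$ measured in $\mathcal{F}_k$, be invoked at the random level $N_t$. The remaining ingredients — symmetrization, Fubini, and Jensen — are routine.
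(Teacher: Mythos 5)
Your argument is correct, and it proves a slightly stronger statement than the one cited (you obtain the constant $2$ rather than $4$). The paper does not reprove this lemma — it cites Theorem~1 of \cite{han2017sharp} — but the route used there, which one can read off from this paper's proof of Lemma~\ref{lem:l_2_shape_constraint_mep_est} via Lemma~\ref{lem:interpolation_inequality}, is genuinely different from yours in the symmetrization step. There one introduces an independent copy $\xi_i'$, sets $\eta_i=\xi_i-\xi_i'$, conditions on the $\eta_i$'s, and performs an Abel summation against the reversed order statistics $\abs{\eta_{(1)}}\geq\cdots\geq\abs{\eta_{(n)}}$, yielding the bound $\E\big[\sum_{k}(\abs{\eta_{(k)}}-\abs{\eta_{(k+1)}})\,\psi_n(k)\big]$; a second Abel summation turns this into $\E\int_0^\infty \psi_n(N^\eta_t)\,\d{t}$ with $N^\eta_t=\#\{i:\abs{\eta_i}>t\}$, after which Jensen gives $\int_0^\infty\psi_n\big(n\Prob(\abs{\eta_1}>t)\big)\d{t}$. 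To return to the tail of $\xi_1$ one then needs $\Prob(\abs{\xi_1-\xi_1'}>t)\leq 2\Prob(\abs{\xi_1}>t/2)$ together with $\psi_n(2x)\leq 2\psi_n(x)$ (subadditivity from concavity and $\psi_n(0)=0$) and a change of variables; the accumulated factor of $2\times 2$ is exactly the constant $4$. You avoid this detour by symmetrizing with independent Rademachers $\epsilon_i$ rather than an independent copy, so that $\epsilon_i\xi_i\stackrel{d}{=}\tilde\epsilon_i\abs{\xi_i}$ keeps the original magnitudes $\abs{\xi_i}$ in play; your layer-cake decomposition of $\abs{\xi_i}$ then lands directly on $\int_0^\infty\E\psi_n(N_t)\,\d{t}$ with $N_t=\#\{i:\abs{\xi_i}>t\}$, and a single application of Jensen finishes. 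The two proofs converge at the step ``$\int_0^\infty\E\psi_n(\#\{\text{multipliers}>t\})\,\d{t}\leq\int_0^\infty\psi_n(n\Prob(\abs{\cdot}>t))\,\d{t}$''; what differs is how the order-statistics information is organized (Abel summation vs.\ direct layer cake) and which symmetrization is used, with yours the cleaner of the two and sharper by a factor $2$. Two micro-remarks: on $\{\xi_i=0\}$ the product $\epsilon_i\operatorname{sgn}(\xi_i)$ vanishes, so to keep $(\tilde\epsilon_i)$ genuinely Rademacher you should set $\tilde\epsilon_i:=\epsilon_i$ there (the value is irrelevant since $\abs{\xi_i}=0$, but the independence claim needs it); and the conditioning step is justified precisely because $S_t$ is $\sigma(\abs{\xi_1},\ldots,\abs{\xi_n})$-measurable while $(\tilde\epsilon_i,X_i)_{i\leq n}$ is independent of that $\sigma$-field — you invoke this correctly, and it is indeed where the nesting $\mathcal{F}_n\subseteq\mathcal{F}_k$ is used.
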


Lemma \ref{lem:multiplier_ineq} controls the first moment of the multiplier empirical process. 
For higher moments, the following moment inequality is useful.

\begin{lemma}[Proposition 3.1 of \cite{gine2000exponential}]\label{lem:p_moment_estimate}
Suppose $X_1,\ldots,X_n$ are i.i.d. with law $P$ and $\xi_1,\ldots,\xi_n$ are i.i.d. mean-zero 
random variables with $\pnorm{\xi_1}{2}<\infty$. Let $\mathcal{F}$ be a class of measurable 
functions such that $\sup_{f \in \mathcal{F}} Pf^2\leq \sigma^2$. Then for any $q\geq 1$,
\begin{align*}
\E \sup_{f \in \mathcal{F}}\bigg\lvert\sum_{i=1}^n \xi_i f(X_i)\bigg\rvert^q
&\leq K^q\bigg[\bigg(\E\sup_{f \in \mathcal{F}} \bigg\lvert\sum_{i=1}^n \xi_i f(X_i)\bigg\rvert \bigg)^q\\
&\qquad+q^{q/2}(\sqrt{n}\pnorm{\xi_1}{2}\sigma)^q +q^q\E\max_{1\leq i\leq n} \abs{\xi_i}^q\sup_{f \in \mathcal{F}}\abs{f(X_i)}^q\bigg].
\end{align*}
Here $K>0$ is a universal constant.
\end{lemma}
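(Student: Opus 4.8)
The inequality is the moment form of Talagrand's concentration bound for suprema of empirical processes, specialized to the multiplier process $S_n(f)\equiv\sum_{i=1}^n\xi_i f(X_i)$, so the plan is to bring in that machinery after a short reduction. First I would set $W_i\equiv(\xi_i,X_i)$, which are i.i.d., and $\mathcal{G}\equiv\{(t,x)\mapsto t\,f(x):f\in\mathcal{F}\}$, so that $Z\equiv\sup_{f\in\mathcal{F}}\abs{S_n(f)}=\sup_{g\in\mathcal{G}}\bigabs{\sum_{i=1}^n g(W_i)}$. Since $\xi_1$ is mean-zero and independent of $X_1$, each $g\in\mathcal{G}$ satisfies $\E g(W_1)=\E\xi_1\cdot Pf=0$, so $Z$ is the supremum of a \emph{centered} empirical process and $\E Z=\E\sup_{f\in\mathcal{F}}\abs{S_n(f)}$ is precisely its expectation. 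The two further parameters entering a Talagrand-type estimate are the weak variance $\Sigma^2\equiv\sup_{f\in\mathcal{F}}\sum_{i=1}^n\E[\xi_i^2 f(X_i)^2]=n\pnorm{\xi_1}{2}^2\sup_{f\in\mathcal{F}}Pf^2\leq n\pnorm{\xi_1}{2}^2\sigma^2$ (using independence within each index and that the data are i.i.d.), and the largest summand $U\equiv\max_{1\leq i\leq n}\sup_{f\in\mathcal{F}}\abs{\xi_i f(X_i)}=\max_{1\leq i\leq n}\abs{\xi_i}\sup_{f\in\mathcal{F}}\abs{f(X_i)}$, for which $\pnorm{U}{q}^q=\E\max_{1\leq i\leq n}\abs{\xi_i}^q\sup_{f\in\mathcal{F}}\abs{f(X_i)}^q$.

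With this in hand I would invoke the moment version of Talagrand's inequality for empirical processes built from (possibly unbounded) independent summands, in the form obtained via the entropy method by Boucheron--Bousquet--Lugosi--Massart and extended to the unbounded case by Adamczak: there is a universal constant $K_0$ such that, for every $q\geq 1$,
\[
\pnorm{Z}{q}\leq\E Z+\pnorm{Z-\E Z}{q}\leq K_0\big(\E Z+\sqrt{q}\,\Sigma+q\,\pnorm{U}{q}\big).
\]
Raising this to the $q$-th power, using $(a+b+c)^q\leq 3^{q-1}(a^q+b^q+c^q)$ for $q\geq1$, and substituting $\Sigma\leq\sqrt{n}\,\pnorm{\xi_1}{2}\,\sigma$ together with the formula for $\pnorm{U}{q}^q$ above, gives exactly the asserted bound with $K=3K_0$.

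The only genuinely delicate point is justifying the unbounded moment inequality, since the classical Talagrand bound presumes $\abs{g}\leq1$ whereas here both $\xi_i$ and the envelope of $\mathcal{F}$ may be unbounded. I would handle this by truncating $\xi_i$ at a level $M$ (and the functions $f$ at a matching level if their envelope is unbounded), applying the bounded moment inequality to the truncated process, absorbing the discarded big-jump contribution and the induced shift of the centering into the terms $q\,\pnorm{U}{q}$ and $\sqrt{q}\,\Sigma$ respectively, and then peeling over dyadic values of $M$; the bookkeeping one must be careful about is ensuring that the envelope of the truncated subclass does not re-enter the final bound. Alternatively --- and this is presumably the route taken in the cited reference --- one quotes the unbounded moment inequality off the shelf, after which the proof reduces to the two elementary computations of $\Sigma$ and $\pnorm{U}{q}$ above together with the convexity step.
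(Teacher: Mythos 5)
The paper does not prove this lemma itself; it cites it as Proposition~3.1 of Gin\'e--Lata{\l}a--Zinn, so what you have produced is a blind reconstruction of that reference's argument. Your reduction is sound: identifying $Z=\sup_{f}\bigl|\sum_i\xi_i f(X_i)\bigr|$ as a centered empirical process over $\mathcal{G}=\{(t,x)\mapsto tf(x)\}$, computing the weak variance $\Sigma^2\le n\pnorm{\xi_1}{2}^2\sigma^2$ and the maximal summand $U=\max_i|\xi_i|\sup_f|f(X_i)|$, invoking a Klein--Rio/Adamczak-type moment form $\pnorm{Z}{q}\le K_0(\E Z+\sqrt{q}\,\Sigma+q\pnorm{U}{q})$, and then raising to the $q$-th power with the elementary convexity bound $(a+b+c)^q\le 3^{q-1}(a^q+b^q+c^q)$ does yield exactly the stated inequality, including the correct identification $\pnorm{U}{q}^q=\E\max_i|\xi_i|^q\sup_f|f(X_i)|^q$.

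The one place where I would push back is the off-the-shelf appeal to Adamczak's inequality. As usually stated, that result requires a $\psi_1$ (sub-exponential) Orlicz norm on the envelope of the summands, which is not assumed here --- only $\pnorm{\xi_1}{2}<\infty$ is. So the ``quote it off the shelf'' route is not quite free; the truncation-and-peeling argument you sketch in the last paragraph (truncate $\xi_i$ at a level $M$, apply the bounded Talagrand/Bousquet moment bound to the truncated part, control the excess via Hoffmann--J{\o}rgensen or Montgomery--Smith, and bookkeep so that only $\Sigma$ and $\pnorm{U}{q}$ survive) is the real content, and it is essentially the route taken in the cited Gin\'e--Lata{\l}a--Zinn paper. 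Your remark that ``this is presumably the route taken in the cited reference'' is accurate. In short: the proposal is correct, and its non-circular version matches the reference's approach; only the appeal to Adamczak as a shortcut should be flagged as requiring hypotheses not in force here.
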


To use Lemma \ref{lem:multiplier_ineq}, we need to control the size of the empirical process. Let
\begin{align}\label{def:uniform_entropy}
J(\delta,\mathcal{F},L_2) \equiv   \int_0^\delta  \sup_Q\sqrt{1+\log \mathcal{N}\big(\epsilon\pnorm{F}{L_2(Q)},\mathcal{F},L_2(Q)\big)}\ \d{\epsilon}
\end{align}
denote the \emph{uniform entropy integral}, where the supremum is taken over all 
discrete probability measures.

We will frequently use the following Koltchinskii-Pollard maximal inequality.

\begin{lemma}[Theorem 2.14.1 of \cite{van1996weak}]\label{lem:KP_maximal_ineq}
Let $\mathcal{F}$ be a class of measurable functions with measurable envelope $F$, 
and $X_1,\ldots,X_n$ are i.i.d. random variables with law $P$. Then
\begin{align*}
\E \bigg\lVert \sum_{i=1}^n \epsilon_i f(X_i)\bigg\lVert_{\mathcal{F}} \lesssim \sqrt{n}J(1,\mathcal{F},L_2) \pnorm{F}{L_2(P)}.
\end{align*}
\end{lemma}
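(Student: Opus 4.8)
The plan is the classical conditioning-plus-chaining argument for Rademacher (hence sub-Gaussian) processes. I would first dispose of the trivial cases: if $J(1,\mathcal{F},L_2)=\infty$ or $\mathcal{F}=\emptyset$ there is nothing to prove, so assume $J(1,\mathcal{F},L_2)<\infty$ and fix an arbitrary $f_\ast\in\mathcal{F}$. Conditioning on $X\equiv(X_1,\ldots,X_n)$, Hoeffding's inequality shows that $f\mapsto Z_f\equiv\sum_{i=1}^n\epsilon_i f(X_i)$ is sub-Gaussian with respect to the random pseudometric $d_X(f,g)\equiv\big(\sum_{i=1}^n(f(X_i)-g(X_i))^2\big)^{1/2}$, in the sense that $\E_\epsilon e^{\lambda(Z_f-Z_g)}\leq e^{\lambda^2 d_X(f,g)^2/2}$ for all real $\lambda$. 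Writing $R_X\equiv\big(\sum_{i=1}^n F(X_i)^2\big)^{1/2}$, the envelope inequality $|f|\leq F$ yields $\mathrm{diam}(\mathcal{F},d_X)\leq 2R_X$, and also $\E_\epsilon|Z_{f_\ast}|\leq\big(\sum_{i=1}^n f_\ast(X_i)^2\big)^{1/2}\leq R_X$ by Jensen.

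Next I would apply Dudley's entropy bound to the process $Z$ conditionally on $X$, giving
\begin{align*}
\E_\epsilon\biggpnorm{\sum_{i=1}^n\epsilon_i f(X_i)}{\mathcal{F}}\lesssim\E_\epsilon|Z_{f_\ast}|+\int_0^{2R_X}\sqrt{\log\mathcal{N}(\epsilon,\mathcal{F},d_X)}\ \d\epsilon .
\end{align*}
The heart of the argument is to recognize the second term as a multiple of the uniform entropy integral. Writing $\Prob_n\equiv n^{-1}\sum_{i=1}^n\delta_{X_i}$ for the conditionally-fixed discrete empirical measure, one has $d_X(f,g)=\sqrt{n}\pnorm{f-g}{L_2(\Prob_n)}$, whence $\mathcal{N}(\epsilon,\mathcal{F},d_X)=\mathcal{N}(\epsilon/\sqrt{n},\mathcal{F},L_2(\Prob_n))$ and $R_X=\sqrt{n}\pnorm{F}{L_2(\Prob_n)}$. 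The substitution $\epsilon=2u\sqrt{n}\pnorm{F}{L_2(\Prob_n)}$ together with monotonicity of covering numbers in the radius then bounds the integral by $2R_X\int_0^1\sqrt{1+\log\mathcal{N}\big(u\pnorm{F}{L_2(\Prob_n)},\mathcal{F},L_2(\Prob_n)\big)}\ \d u\leq 2R_X\cdot J(1,\mathcal{F},L_2)$, since $\Prob_n$ is one of the discrete probability measures over which the supremum defining $J$ is taken. As $J(1,\mathcal{F},L_2)\geq 1$, the residual term $\E_\epsilon|Z_{f_\ast}|\leq R_X$ is absorbed as well, so conditionally on $X$ we get $\E_\epsilon\pnorm{\sum_i\epsilon_i f(X_i)}{\mathcal{F}}\lesssim R_X\cdot J(1,\mathcal{F},L_2)$.

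Finally I would integrate over $X$: by Jensen's inequality $\E_X R_X=\E_X\big(\sum_{i=1}^n F(X_i)^2\big)^{1/2}\leq\big(\sum_{i=1}^n\E F(X_i)^2\big)^{1/2}=\sqrt{n}\pnorm{F}{L_2(P)}$, and Fubini together with the independence of $\{\epsilon_i\}$ and $\{X_i\}$ turns the conditional bound into $\E\pnorm{\sum_i\epsilon_i f(X_i)}{\mathcal{F}}\lesssim\sqrt{n}J(1,\mathcal{F},L_2)\pnorm{F}{L_2(P)}$, as claimed. The only substantive input is Dudley's chaining bound for sub-Gaussian processes; I expect the (minor) obstacle to be the bookkeeping in converting the random empirical pseudometric $d_X$ into $L_2(\Prob_n)$ and then into the deterministic uniform entropy integral $J(1,\mathcal{F},L_2)$, and in cleanly absorbing both the distinguished-point term and the factor $2$ coming from the diameter bound. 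Measurability causes no trouble because $\mathcal{F}$ is assumed countable throughout.
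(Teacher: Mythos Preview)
Your argument is correct and is essentially the standard proof of this inequality (conditioning on the sample, sub-Gaussianity of the Rademacher process via Hoeffding, Dudley chaining, and the change of variables that turns the random $L_2(\Prob_n)$ entropy integral into the deterministic uniform entropy integral $J$). The paper does not give its own proof of this lemma at all; it simply cites Theorem~2.14.1 of \cite{van1996weak}, whose proof follows exactly the route you outline.
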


Our last technical ingredient is Talagrand's concentration inequality \cite{talagrand1996new} for the empirical process in the form given by \cite{massart2000constants}:

\begin{lemma}\label{lem:talagrand_conc_ineq}
Let $\mathcal{F}$ be a class of measurable functions such that $\sup_{f \in \mathcal{F}} \pnorm{f}{\infty}\leq b$. Then
\begin{align*}
\Prob\bigg(\sup_{f \in \mathcal{F}}\abs{\G_n f} \geq 2\E\sup_{f \in \mathcal{F}}\abs{\G_n f} +\sqrt{8\sigma^2 x}+34.5 b \frac{x}{\sqrt{n}} \bigg)\leq e^{-x},
\end{align*}
where $\sigma^2\equiv \sup_{f \in \mathcal{F}} \mathrm{Var}_P f$, and $\mathbb{\G}_n\equiv \sqrt{n}(\Prob_n-P)$.
\end{lemma}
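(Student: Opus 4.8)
The plan is to prove this by Ledoux's \emph{entropy method}, which in the sharp form worked out by Massart \cite{massart2000constants} produces exactly the constants in the statement. Write $S := \sqrt{n}\,\sup_{f\in\mathcal F}\abs{\G_n f} = \sup_{f\in\mathcal F}\abs{\sum_{i=1}^n(f(X_i)-Pf)}$, so that, after multiplying the event in $\Prob(\cdot)$ through by $\sqrt n$, the claim is equivalent to $\Prob\big(S \ge 2\E S + \sqrt{8n\sigma^2 x} + 34.5\,b\,x\big)\le e^{-x}$. First I would reduce to a one-sided supremum of \emph{centered} functions: replacing $\mathcal F$ by $\mathcal F\cup(-\mathcal F)$ leaves $b$, $\sigma^2$ and $\E\sup_f\abs{\G_n f}$ unchanged (variance and $\|\cdot\|_\infty$ are invariant under $f\mapsto -f$) and turns $\sup\abs{\cdot}$ into $\sup$, so we may assume $S=\sup_{f\in\mathcal F}\sum_{i=1}^n g_f(X_i)$ with $g_f=f-Pf$, $\|g_f\|_\infty\le 2b$ and $Pg_f^2=\mathrm{Var}_P f\le\sigma^2$.

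\textbf{Tensorization.} For $\lambda>0$ set $H(\lambda)=\E e^{\lambda S}$, and for each $i$ let $S_i'$ denote $S$ with the coordinate $X_i$ resampled by an independent copy $X_i'$. The analytic core is the sub-additivity of the entropy functional on a product space combined with the one-dimensional variational bound for entropy: this yields a differential inequality of the form
\[
\lambda H'(\lambda)-H(\lambda)\log H(\lambda)\ \le\ \sum_{i=1}^n\E\!\Big[e^{\lambda S}\,\phi\big(-\lambda (S-S_i')\big)\Big],\qquad \phi(u):=e^u-u-1 .
\]
All the probabilistic information about $\mathcal F$ enters only through the right-hand side.

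\textbf{Self-bounding and the tail.} Let $f^\ast$ attain (or nearly attain) the supremum defining $S$. On $\{S\ge S_i'\}$ one has $0\le S-S_i'\le g_{f^\ast}(X_i)-g_{f^\ast}(X_i')\le 4b$, hence $\sum_i (S-S_i')_+^2\le \sum_i\big(g_{f^\ast}(X_i)-g_{f^\ast}(X_i')\big)^2$; taking the conditional expectation over $(X_i')$ and using $\E'\sum_i(g_{f^\ast}(X_i)-g_{f^\ast}(X_i'))^2\le \sum_i g_{f^\ast}(X_i)^2+n\sigma^2$ together with $\sum_i g_{f^\ast}(X_i)^2\le 2b\sum_i g_{f^\ast}(X_i)=2bS$, one bounds the right-hand side of the tensorization inequality in terms of the variance proxy $v\le 4n\sigma^2+c_1 b\,\E S$ (with $c_1$ a universal constant). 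Feeding this into the differential inequality and integrating gives a sub-gamma estimate $\log\E e^{\lambda(S-\E S)}\le \lambda^2 v/\big(2(1-c_2 b\lambda)\big)$ for $0<\lambda<1/(c_2 b)$; optimizing the resulting Chernoff bound yields $\Prob\big(S\ge \E S+\sqrt{2vx}+c_2 b x\big)\le e^{-x}$. Finally substitute $v\le 4n\sigma^2+c_1 b\,\E S$ and use $\sqrt{2vx}\le \sqrt{8n\sigma^2 x}+\sqrt{2c_1 b(\E S)x}\le \sqrt{8n\sigma^2 x}+\E S+\tfrac{c_1}{2}b x$ (AM--GM); the extra $\E S$ combines with $\E S$ to give $2\E S$, and a careful bookkeeping of $c_1,c_2$ shows the coefficient of $bx$ can be taken $\le 34.5$, which is the claim after dividing by $\sqrt n$.

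\textbf{Main obstacle.} The delicate point is the interplay of the two middle steps: one must invoke the \emph{right} one-dimensional entropy bound (so that the exponent $\phi$, rather than a square, appears on the right of the tensorization identity) and then run the self-bounding argument tightly enough that the variance proxy is genuinely $O(n\sigma^2)$ plus a term \emph{linear} in $\E S$ — this is precisely what distinguishes the sharp Talagrand/Massart inequality from a crude bounded-differences (McDiarmid) bound, which would destroy the essential $\sigma^2$-dependence. Everything after that (solving the ODE, the Chernoff optimization, and chasing the universal constants down to $34.5$) is routine but must be done with care to land the advertised numerical values; an alternative would be to quote Bousquet's or the Klein--Rio one-sided inequalities and deduce the present two-sided form by the reduction of the first paragraph, but that merely shifts the same analytic work elsewhere.
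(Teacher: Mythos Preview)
The paper does not give its own proof of this lemma: it is stated as Talagrand's concentration inequality in the form due to Massart \cite{massart2000constants} and simply cited. Your proposal is precisely a sketch of Massart's entropy-method argument, so in that sense you are following exactly the route the paper points to.

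One technical point in your sketch deserves a flag. The inequality $\sum_i g_{f^\ast}(X_i)^2 \le 2b\sum_i g_{f^\ast}(X_i)=2bS$ is not correct as written: $g_{f^\ast}(X_i)$ is not nonnegative, so $g_{f^\ast}(X_i)^2\le 2b\,g_{f^\ast}(X_i)$ can fail. In Massart's actual proof the control of the random ``variance'' term is obtained differently --- one works with the one-sided increments $(S-S_i')_+$ directly inside the entropy inequality and uses the contraction $\phi(-\lambda u)\le \lambda^2 u^2/(2(1-\lambda\cdot 4b))$ for $u\in[0,4b]$, together with a decoupling that yields a variance proxy of the form $v\le 2n\sigma^2 + 16b\,\E S$ (or a close variant), without ever needing the false pointwise bound you wrote. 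Once that is corrected, the rest of your outline --- the Herbst argument solving the differential inequality, the Chernoff optimization, and the AM--GM splitting $\sqrt{2c_1 b(\E S)x}\le \E S + \tfrac{c_1}{2}bx$ to absorb the cross term into $2\E S$ --- is exactly how Massart obtains the constants $2$, $\sqrt{8}$, and $34.5$. So the strategy is right; just be careful that the self-bounding step is the genuinely delicate one and your current version of it does not quite go through as stated.
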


\subsection{Proof of Theorem \ref{thm:envelope_rate_upper_bound}}

\begin{proof}[Proof of Theorem \ref{thm:envelope_rate_upper_bound}]
	We only prove the case $\mathcal{F}_0\subset L_\infty(1)$ as in Remark \ref{rmk:upper_bound_lse} (1). 
	The proof for the case
	$\pnorm{\hat{f}_n-f_0}{\infty}=\mathcal{O}_{\mathbf{P}}(1)$ follows with only 
	minor 
	modifications. We also work with the more general uniform VC-type condition as in Remark \ref{rmk:uniform_VC}. 
	Let $\delta_n\equiv n^{-\frac{1}{2(2-\gamma)}}$. By the proof of Proposition 2 of \cite{han2017sharp}, 
	we only need to estimate for each $t\geq 1$, with $\mathcal{F}_0(r)=\{f \in \mathcal{F}-f_0: \pnorm{f}{L_2(P)}\leq r\}$,
	\begin{align*}
	\E \bigg(\sup_{f \in \mathcal{F}_0(2^j t \delta_n)}\bigg\lvert\frac{1}{\sqrt{n}}
	   \sum_{i=1}^n \xi_i f(X_i)\bigg\rvert\bigg)^2,\quad  \E \bigg(\sup_{f \in \mathcal{F}_0(2^j t \delta_n)} 
	   \bigg\lvert\frac{1}{\sqrt{n}}\sum_{i=1}^n \epsilon_i f^2(X_i)\bigg\rvert\bigg)^2.
	\end{align*}
	By the contraction principle for Rademacher processes and the moment inequality 
	Lemma \ref{lem:p_moment_estimate}, we only need to estimate the sum of 
	\begin{align}\label{ineq:envelope_rate_upper_bound_1}
	(I)\equiv \bigg(\E\sup_{f \in \mathcal{F}_0(2^j t \delta_n)} \bigg\lvert\frac{1}{\sqrt{n}} 
	    \sum_{i=1}^n \xi_i f(X_i)\bigg\rvert\bigg)^2 + \bigg(\E\sup_{f \in \mathcal{F}_0(2^j t \delta_n)}  
	    \bigg\lvert \frac{1}{\sqrt{n}}\sum_{i=1}^n \epsilon_i f(X_i)\bigg\lvert \bigg)^2
	\end{align}
	and 
	\begin{align}\label{ineq:envelope_rate_upper_bound_2}
	(II)\equiv \big(2^j t \delta_n(\pnorm{\xi_1}{2}\vee 1)\big)^2 +  n^{-1}\cdot 
	\E \max_{1\leq i\leq n} (\abs{\xi_i}\vee 1)^2 \cdot \pnorm{F_0(2^j t \delta_n)}{L_2(P)}^2.
	\end{align}
	For the first summand (\ref{ineq:envelope_rate_upper_bound_1}), by the 
	Koltchinskii-Pollard maximal inequality for empirical processes (cf. Lemma \ref{lem:KP_maximal_ineq}), since $\mathcal{F}$ is of uniform VC-type, it follows that
	\begin{align*}
	\max_{1\leq k\leq n} \E\sup_{f \in \mathcal{F}_0(2^j t \delta_n)} \bigg\lvert\frac{1}{\sqrt{k}}\sum_{i=1}^k \epsilon_i f(X_i)\bigg\lvert \leq C_{\mathcal{F}} \pnorm{F_0(2^j t \delta_n)}{L_2(P)}\leq C_{\mathcal{F}}' (2^j t)^{\gamma} \delta_n^\gamma.
	\end{align*}
	We may apply the multiplier inequality Lemma \ref{lem:multiplier_ineq} with $
	\psi_n(k)\equiv \sqrt{k} C_{\mathcal{F}}' (2^j t)^{\gamma} \delta_n^\gamma$ 
	to see that
	\begin{align*}
	\E\sup_{f \in \mathcal{F}_0(2^j t \delta_n)} \bigg\lvert\frac{1}{\sqrt{n}}\sum_{i=1}^n \xi_i f(X_i)\bigg\lvert &\leq 4C_{\mathcal{F}}' (2^j t)^\gamma \pnorm{\xi_1}{2,1} \delta_n^\gamma.
	\end{align*}
	Hence,
	\begin{align}\label{ineq:envelope_rate_upper_bound_3}
	(\ref{ineq:envelope_rate_upper_bound_1})\leq C_{\mathcal{F},\xi} (2^j t\delta_n)^{2\gamma}.
	\end{align}
	(\ref{ineq:envelope_rate_upper_bound_2}) is easy to handle by noting that $\E \max_{1\leq i\leq n} (\abs{\xi_i}\vee 1)^2\lesssim n$ under the assumption that $\pnorm{\xi_1}{2}<\infty$, which entails that
	\begin{align}\label{ineq:envelope_rate_upper_bound_4}
	(\ref{ineq:envelope_rate_upper_bound_2})\leq C_{\xi} \left( (2^j t \delta_n)^2+(2^j t\delta_n)^{2\gamma} \right).
	\end{align}
	Combining (\ref{ineq:envelope_rate_upper_bound_3}) and (\ref{ineq:envelope_rate_upper_bound_4}) and the arguments in the proof of Proposition 2 of \cite{han2017sharp}, we have
	\begin{align*}
	\Prob\big(\pnorm{\hat{f}_n-f_0}{L_2(P)}\geq t \delta_n\big)&\leq C_{\mathcal{F},\xi} \sum_{j\geq 0: 2^jt \delta_n\leq 2} \frac{  (2^j t \delta_n)^2+ (2^j t\delta_n)^{2\gamma}. }{   \left(2^{2j} t^2 \sqrt{n}\delta_n^2\right)^2 }\\
	&\leq C_{\mathcal{F},\xi}' \big( n\delta_n^{2(2-\gamma)}\big)^{-1} \sum_{j\geq 0}  \frac{1}{(2^jt)^{4-2\gamma} }\leq C_{\mathcal{F},\xi}'' t^{-2},
	\end{align*}
	where the last inequality follows from the choice of $\delta_n$. Now the claim of the theorem (in the form of Remark \ref{rmk:upper_bound_lse} (1) and under the more general condition as in Remark \ref{rmk:uniform_VC}) follows by integrating the above tail estimate.
\end{proof}

\subsection{Proof of Theorem \ref{thm:envelope_rate_lower_bound}}

The basic device we will use to derive a lower bound for the risk of the least squares estimator is the following.

\begin{proposition}[Proposition 6 of \cite{han2017sharp}]\label{prop:proof_route_lower_bound_lse}
	Let
	\begin{align*}
	F_n(\delta)\equiv \sup_{f \in \mathcal{F}-f_0: Pf^2\leq \delta^2} (\Prob_n-P)(2\xi f-f^2)-\delta^2\equiv E_n(\delta)-\delta^2.
	\end{align*}
	Suppose that $0<\delta_1<\delta_2$ are such that $E_n(\delta_1)<F_n(\delta_2)$. Then there exists a LSE $f^\ast_n$ such that $\pnorm{f^\ast_n-f_0}{L_2(P)}\geq \delta_1$.
\end{proposition}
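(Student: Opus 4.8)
The plan is to turn the least-squares problem into a maximization over the shifted class $\mathcal{F}_0\equiv\mathcal{F}-f_0$ and then read off the conclusion from a comparison of the attained maximum with what is achievable inside a small $L_2(P)$-ball. First I would expand the criterion: writing $g\equiv f-f_0$ and using $Y_i-f(X_i)=\xi_i-g(X_i)$,
\[
\frac1n\sum_{i=1}^n(Y_i-f(X_i))^2=\Prob_n\xi^2-\Prob_n\big(2\xi g-g^2\big),
\]
so $\hat f_n$ is an LSE exactly when $\hat g_n\equiv\hat f_n-f_0$ maximizes $M_n(g)\equiv\Prob_n(2\xi g-g^2)$ over $\mathcal{F}_0$; consequently every LSE satisfies $M_n(\hat f_n-f_0)=V$, where $V\equiv\sup_{g\in\mathcal{F}_0}M_n(g)$. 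Since $\E\xi_1=0$ and $\xi_1$ is independent of $X_1$, one has $P(2\xi g-g^2)=-Pg^2$, hence $M_n(g)=(\Prob_n-P)(2\xi g-g^2)-Pg^2$.

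Next I would bound $V$ from below at scale $\delta_2$ and from above on $\mathcal{F}_0(\delta_1)$. For $g\in\mathcal{F}_0(\delta_2)$ one has $Pg^2\le\delta_2^2$, so $M_n(g)\ge(\Prob_n-P)(2\xi g-g^2)-\delta_2^2$; taking the supremum over $\mathcal{F}_0(\delta_2)$ gives $V\ge E_n(\delta_2)-\delta_2^2=F_n(\delta_2)$. For $g\in\mathcal{F}_0(\delta_1)$, dropping the nonnegative term $Pg^2$ gives $M_n(g)\le(\Prob_n-P)(2\xi g-g^2)\le E_n(\delta_1)$, so $\sup_{g\in\mathcal{F}_0(\delta_1)}M_n(g)\le E_n(\delta_1)$. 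Combining these with the hypothesis $E_n(\delta_1)<F_n(\delta_2)$,
\[
\sup_{g\in\mathcal{F}_0(\delta_1)}M_n(g)\le E_n(\delta_1)<F_n(\delta_2)\le V.
\]

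To finish, I would argue by contradiction: if every LSE $f_n^\ast$ had $\pnorm{f_n^\ast-f_0}{L_2(P)}\le\delta_1$, i.e. $f_n^\ast-f_0\in\mathcal{F}_0(\delta_1)$, then $V=M_n(f_n^\ast-f_0)\le\sup_{\mathcal{F}_0(\delta_1)}M_n<V$, which is impossible. Hence there is an LSE $f_n^\ast$ with $\pnorm{f_n^\ast-f_0}{L_2(P)}>\delta_1$, which is the assertion (indeed every LSE satisfies the strict inequality).

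The deduction itself is soft and short, so there is no genuine analytic obstacle inside the proposition. The point that deserves care — and the reason the statement reads ``there exists an LSE'' rather than ``every LSE'' — is that $M_n$ depends on $g$ only through its values at the design points $X_1,\dots,X_n$, so the optimal empirical fit is typically realized by a whole family of functions in $\mathcal{F}$ that differ off the design points and therefore have different $L_2(P)$-norms; the comparison above only guarantees that at least one member of that family lies at $L_2(P)$-distance exceeding $\delta_1$ from $f_0$. (Existence of an exact maximizer is implicit; in the application to Theorem \ref{thm:envelope_rate_lower_bound} the class $\tilde{\mathcal{F}}$ is chosen small enough that an LSE exists, and otherwise one passes to near-maximizers at the cost of an arbitrarily small shrinkage of $\delta_1$.) All the real work then lies downstream, in choosing $\tilde{\mathcal{F}}$, the law of $\xi_1$, and the pair $\delta_1<\delta_2$ so that the event $\{E_n(\delta_1)<F_n(\delta_2)\}$ holds with the required probability $\mathfrak{p}$ at the target scale $\delta_1\asymp n^{-1/(2(2-\gamma))-c_\gamma'\epsilon}$.
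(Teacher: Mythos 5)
Your argument is correct and is the canonical one: expand the squared loss to reduce the LSE condition to maximizing $M_n(g)=\Prob_n(2\xi g-g^2)=(\Prob_n-P)(2\xi g-g^2)-Pg^2$ over $\mathcal F_0$, lower-bound the overall supremum by $F_n(\delta_2)$, upper-bound the restricted supremum over $\mathcal F_0(\delta_1)$ by $E_n(\delta_1)$, and conclude from $E_n(\delta_1)<F_n(\delta_2)$ that no maximizer lies in the $\delta_1$-ball. This matches the approach behind Proposition~6 of \cite{han2017sharp}, and your closing remark correctly identifies that the only delicate point (existence of an exact maximizer; non-uniqueness off the design points) is handled downstream by the choice of $\tilde{\mathcal F}$ in the application.
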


The key ingredient in applying the above device is the following.

\begin{proposition}\label{prop:lower_bound_envelope}
	For any $\gamma \in (0,1]$, there exists some VC-subgraph class  $\tilde{\mathcal{F}}$ satisfying (\ref{cond:size_envelope}) with the following property: for each $\epsilon \in (0,1/2)$, there exists some law for $\xi_1$ with $\pnorm{\xi_1}{2(1-\epsilon)}<\infty$ such that 
	\begin{enumerate}
		\item for any $\vartheta \geq 4$, there exists some $\mathfrak{p}>0$, with $\delta_2 \equiv \vartheta n^{-\frac{1}{2(2-\gamma)}}$,
		\begin{align*}
		\Prob\bigg(F_n(\delta_2)\geq \frac{1}{2}c_1\vartheta^{\gamma} n^{-\frac{1 }{2-\gamma}}\tau_n(\epsilon,\gamma) \bigg)\geq 2\mathfrak{p},
		\end{align*}
		holds for $n$ large enough depending on $\epsilon, \vartheta,\gamma$.  
		Here $c_1$ depends on $\epsilon,\gamma$, and $\tau_n(\epsilon,\gamma)\equiv n^{\frac{1-\gamma}{2-\gamma}\cdot \frac{\epsilon}{2-\epsilon}}$.
		\item for any $\rho>0$, with $\delta_1 \equiv \rho n^{-\frac{1}{2(2-\gamma)}-\beta_\epsilon}$,
		\begin{align*}
		\Prob\left(E_n(\delta_1)\leq \mathfrak{p}^{-1} C_{\epsilon,\xi} \rho^{\gamma} n^{-\frac{1 }{2-\gamma}}\omega_n(\epsilon,\gamma)\right)\geq 1-\mathfrak{p}.
		\end{align*}
		 Here  $\omega_n(\epsilon,\gamma)= n^{ -\gamma \beta_\epsilon +\frac{\epsilon}{2(1-\epsilon)}}$.
	\end{enumerate}
	In (1)-(2) above, $F_n(\delta)\equiv \sup_{f \in \tilde{\mathcal{F}}: Pf^2\leq \delta^2} (\Prob_n -P)(2\xi f-f^2)-\delta^2\equiv E_n(\delta)-\delta^2$.
\end{proposition}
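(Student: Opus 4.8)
The plan is to construct the witnessing VC-subgraph class $\tilde{\mathcal{F}}$ explicitly and then verify the two probabilistic estimates (1) and (2) separately by relatively standard-but-delicate empirical process computations. For the construction, I would take $\tilde{\mathcal{F}}$ to be a family of (scaled) indicator functions or triangular bumps supported on intervals near a fixed point, say the origin, whose widths and heights are tuned so that a function with $P f^2 \le \delta^2$ automatically has $L_\infty$-size (and hence envelope-size) of order $\delta^{\gamma}$; concretely, something like $\tilde{\mathcal{F}} = \{ f_{a}(x) = a^{-(1-\gamma)/2}\, \bm{1}_{[0,a]}(x) : a \in (0,1] \}$ (up to truncation to keep it uniformly bounded and up to a constant), so that $\pnorm{f_a}{L_2(P)}^2 \asymp a^{\gamma}$ while $\pnorm{f_a}{\infty} \asymp a^{-(1-\gamma)/2}$. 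This makes $F(\delta) \asymp \delta^{\gamma}\cdot(\text{something truncated})$, matching \eqref{cond:size_envelope}. One checks $\tilde{\mathcal{F}}$ is VC-subgraph because it is a monotone family of indicator functions scaled by a monotone factor. The law of $\xi_1$ is chosen to be heavy-tailed with $\pnorm{\xi_1}{2(1-\epsilon)} < \infty$ but $\pnorm{\xi_1}{2}=\infty$; the classical choice is $\Prob(|\xi_1|>t) \asymp t^{-2(1-\epsilon)}$ (symmetrized), so that a single large order statistic $\max_i |\xi_i| \asymp n^{1/(2(1-\epsilon))}$ appears with constant probability and drives the lower bound.

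For part (1), the lower bound on $F_n(\delta_2)$, I would exhibit a single good index $a^\ast$ (depending on $n$, $\vartheta$, $\epsilon$, $\gamma$) in $\tilde{\mathcal{F}}$ and lower-bound $F_n(\delta_2)$ by evaluating the defining supremum at $f_{a^\ast}$. The event of interest is that exactly one (or a few) covariate $X_i$ lands in the tiny interval $[0,a^\ast]$ while the corresponding $\xi_i$ is atypically large; on that event $(\Prob_n - P)(2\xi f_{a^\ast}) \approx \frac{2}{n}\xi_i f_{a^\ast}(X_i)$ is of the required order, while the $-f_{a^\ast}^2$ and $\delta_2^2$ terms are negligible by the tuning of $a^\ast$. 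A Poisson-type / binomial estimate for $\Prob(\text{exactly one } X_i \in [0,a^\ast]) \asymp n a^\ast (1 - a^\ast)^{n-1}$, combined with the tail of $\xi_1$, yields the constant probability $2\mathfrak{p}$. The power $\tau_n(\epsilon,\gamma) = n^{\frac{1-\gamma}{2-\gamma}\cdot\frac{\epsilon}{2-\epsilon}}$ should emerge from balancing the interval width $a^\ast \asymp \delta_2^2 \asymp n^{-1/(2-\gamma)}$ against the size of the large error $\asymp n^{1/(2(1-\epsilon))}$ and the scaling exponent $-(1-\gamma)/2$.

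For part (2), the upper bound on $E_n(\delta_1) = \sup_{Pf^2\le\delta_1^2}(\Prob_n-P)(2\xi f - f^2)$, I would split $E_n(\delta_1)$ into the multiplier part $\sup (\Prob_n-P)(2\xi f)$ and the (centered) quadratic part $\sup(\Prob_n - P)(-f^2)$, and bound each in expectation. The quadratic part is handled by the Koltchinskii–Pollard inequality (Lemma \ref{lem:KP_maximal_ineq}) applied to the uniform-VC class $\{f^2 : Pf^2 \le \delta_1^2\}$ with envelope of $L_2$-size $\asymp \delta_1^{\gamma}$, giving order $n^{-1/2}\delta_1^{\gamma}$. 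For the multiplier part, since $\pnorm{\xi_1}{2}=\infty$ we cannot use the clean multiplier inequality Lemma \ref{lem:multiplier_ineq} directly on the second moment; instead I would truncate $\xi_1$ at a level $\asymp n^{1/(2(1-\epsilon))}$, control the bounded part by Lemma \ref{lem:multiplier_ineq} / Lemma \ref{lem:KP_maximal_ineq} and the unbounded tail part crudely (it contributes at most the $\frac1n \max_i|\xi_i|\cdot\sup|f|$ term, of the stated order), then apply Markov's inequality to pass from expectation to the $1-\mathfrak{p}$ probability bound with the $\mathfrak{p}^{-1}$ prefactor. The factor $\omega_n(\epsilon,\gamma) = n^{-\gamma\beta_\epsilon + \epsilon/(2(1-\epsilon))}$ should come out of the truncation-level bookkeeping together with the choice of $\beta_\epsilon$ (which is precisely $c_\gamma'\epsilon/(2(2-\gamma)) = \epsilon/\gamma$ up to constants in the final theorem).

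The main obstacle I expect is the bookkeeping that makes part (1) and part (2) compatible: one must choose $\beta_\epsilon$ and verify that $E_n(\delta_1) < F_n(\delta_2)$ on an event of positive probability (so that Proposition \ref{prop:proof_route_lower_bound_lse} can be invoked), which requires $\rho^{\gamma}\omega_n(\epsilon,\gamma) \ll \vartheta^{\gamma}\tau_n(\epsilon,\gamma)$ — i.e. the two $n$-powers must be separated, forcing $\beta_\epsilon$ to be of the right (small, $O(\epsilon)$) size, and it is there that the constant $c_\gamma' = 2/\gamma$ is pinned down. The delicate point is that the \emph{same} heavy-tailed error law and the \emph{same} class must simultaneously make the "signal" term $F_n(\delta_2)$ large (via one rare large-error/small-interval coincidence) and keep the "noise" term $E_n(\delta_1)$ small in a uniform sense; getting the truncation level and the exponents $\gamma$, $\epsilon$ to cooperate is the crux, and is exactly where the argument departs from a routine chaining estimate.
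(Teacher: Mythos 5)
Your construction of $\tilde{\mathcal{F}}$ does not satisfy the envelope growth condition for $\gamma<1$, and this is not a fixable constant. With $f_a=a^{-(1-\gamma)/2}\bm{1}_{[0,a]}$ one has $Pf_a^2=a^\gamma$, so $\tilde{\mathcal{F}}(\delta)=\{f_a:a\le\delta^{2/\gamma}\}$ and the envelope is $F(\delta)(x)=x^{-(1-\gamma)/2}\bm{1}_{\{x\le\delta^{2/\gamma}\}}$, whose $L_2(P)$-norm is $\asymp\delta$, not $\asymp\delta^\gamma$. Any nested one-parameter scaled family of this kind forces the envelope norm and the individual-function norms to coincide, so you can only produce the case $\gamma=1$. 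To get envelope exponent $\gamma<1$ while every individual function has $Pf^2\le\delta^2$, the class $\tilde{\mathcal{F}}(\delta)$ must contain \emph{many functions with disjoint supports}; this is exactly the paper's tree-structured construction, where $N(\delta)\asymp\delta^{-(2-2\gamma)}$ disjoint intervals of length $\asymp\delta^2$ give $\|F(\delta)\|_{L_2(P)}^2\asymp N(\delta)\cdot\delta^2\asymp\delta^{2\gamma}$. Also note Theorem \ref{thm:envelope_rate_lower_bound} promises a \emph{uniformly bounded} VC-subgraph class, which your $\{f_a\}$ is not (the heights blow up as $a\to 0$), whereas the paper's plain indicators trivially are.

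The disjoint-support structure is then indispensable in part (1), and this is where your strategy has a second, independent gap. Evaluating the supremum at a single $f_{a^\ast}$ and waiting for one $X_i$ to land in $[0,a^\ast]$ with an atypically large $\xi_i$ cannot produce the extra factor $\tau_n(\epsilon,\gamma)$. Since $\xi$ and $X$ are independent, among the $\asymp n a^\ast$ points falling in the chosen interval the largest error is only of order $(n a^\ast)^{1/\alpha_\epsilon}$; you receive no contribution from the rest of the sample. The paper instead exploits the supremum over $N(\delta_2)$ conditionally independent disjoint blocks: on the event $\mathcal{E}_n$ that each interval holds $\asymp n\delta_2^2$ of the $X_i$'s, the $\alpha_\epsilon$-stability of $\xi_1$ makes the normalized block sums i.i.d.\ copies of $\xi_1$, so $\E\max_{1\le l\le N(\delta_2)}|\xi_l|\asymp N(\delta_2)^{1/\alpha_\epsilon}$ (two-sided, via Lemma \ref{lem:characterization_maxima}). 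It is this $N(\delta_2)^{1/\alpha_\epsilon}$ that, after dividing by $n$, produces $\tau_n(\epsilon,\gamma)=n^{\frac{1-\gamma}{2-\gamma}\cdot\frac{\epsilon}{2-\epsilon}}$. Paley--Zygmund converts the first-moment lower bound into a constant-probability event, and Talagrand's inequality shows the quadratic term $\sup|\G_n(f^2)|$ and the deterministic $\delta_2^2$ are of strictly smaller order for large $n$. A single-function lower bound simply omits the $N(\delta_2)^{1/\alpha_\epsilon}$ boost and yields the wrong exponent. (As a small side point, to have $\|\xi_1\|_{2(1-\epsilon)}<\infty$ you need the tail strictly lighter than $t^{-2(1-\epsilon)}$; the paper's choice $\Prob(|\xi_1|>t)\asymp t^{-(2-\epsilon)}$ works, your $t^{-2(1-\epsilon)}$ does not.)

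Your sketch for part (2) is broadly in the right direction and parallels the paper up to presentation: Koltchinskii--Pollard for the centered quadratic part, and a multiplier bound producing the $n^{1/(2(1-\epsilon))}$ factor. The paper avoids explicit truncation by feeding the (larger but still concave) function $\psi_n(k)=C_1k^{1/(2-2\epsilon)}\delta_1^\gamma$ directly into Lemma \ref{lem:multiplier_ineq}; this is valid because the K--P bound $\sqrt{k}\,\delta_1^\gamma$ is dominated by $\psi_n(k)$, and it immediately yields $\E\sup|\sum\xi_if(X_i)|\lesssim\delta_1^\gamma\, n^{1/(2(1-\epsilon))}\|\xi_1\|_{2(1-\epsilon),1}$ with no separate tail analysis. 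Your truncate-and-patch route would give the same order but requires more care to keep the concavity hypothesis of Lemma \ref{lem:multiplier_ineq} after splitting. Either way, Markov then delivers the $1-\mathfrak{p}$ bound with the $\mathfrak{p}^{-1}$ prefactor, and the choice $\beta_\epsilon=(2/\gamma)\epsilon$ makes the exponents of $\omega_n$ and $\tau_n$ separate as needed for Proposition \ref{prop:proof_route_lower_bound_lse}.
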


The proof of Proposition \ref{prop:lower_bound_envelope} relies on a delicate construction of a tree-structured $\tilde{\mathcal{F}}$, and a sequence of technical arguments including concentration of empirical processes, the Paley-Zygmund moment argument, and an exact characterization of the size of the maxima of summations. To ease reading, a formal proof of Proposition \ref{prop:lower_bound_envelope} will be given in Section \ref{section:proof_remaining}. 

\begin{proof}[Proof of Theorem \ref{thm:envelope_rate_lower_bound}]
	Let $f_0=0$. In order to apply Proposition \ref{prop:proof_route_lower_bound_lse}, we only need to require an order in the exponent of $\tau_n(\cdot,\cdot)$ and $\omega_n(\cdot,\cdot)$ in Proposition \ref{prop:lower_bound_envelope}, by making a good choice of $\beta_\epsilon$. To this end, it suffices to require
	\begin{align*}
	-\gamma \beta_\epsilon +\frac{\epsilon}{2(1-\epsilon)}< \frac{1-\gamma}{2-\gamma} \frac{\epsilon}{2-\epsilon}\Leftrightarrow \beta_\epsilon > \frac{\epsilon}{\gamma}\bigg[\frac{2-\epsilon\gamma}{(2-\epsilon)(2-\gamma)(2-2\epsilon)}\bigg].
	\end{align*}
	Since $\epsilon \in (0,1/2)$ and $\gamma \in (0,1]$, we may choose $\beta_\epsilon=(2/\gamma)\cdot \epsilon$, along with any $\vartheta\geq 4$ and $\rho>0$ small enough to conclude.
\end{proof}

\subsection{Proof of Theorem \ref{thm:isotonic_reg}}

The proof of Theorem \ref{thm:isotonic_reg} follows from a more principled oracle inequality presented below---it captures the essential geometric property in the model that accounts for both the adaptation and robustness property of the shape-restricted LSE up to error distributions with an $L_{2,1}$ moment.  

\subsubsection{The general oracle inequality}

First some definitions.

\begin{definition}\label{def:l_2_shape_constraint}
	$\mathcal{F}$ is said to satisfy a \emph{convexity-based shape constraint} (under $P$) if 
	$\mathcal{F}$ is convex, and $\mathcal{F}(\delta)=\{f \in \mathcal{F}: Pf^2\leq \delta^2\}$ 
	admits a convex envelope  $F(\delta)$.
\end{definition}

\begin{definition}\label{def:basic_adaptive_set}
	$\mathcal{G}\subset \mathcal{F} $ is said to be a \emph{basic adaptive subset} of 
	$\mathcal{F}$ if $\mathcal{F}-\mathcal{G}\subset \mathcal{F}$. $\mathcal{G}_m$ is said 
	to be an \emph{$m$-th order adaptive subset} of $\mathcal{F}$ if for any $g_m \in \mathcal{G}_m$, 
	there is an interval partition $\{I_j\}_{j=1}^m$ of $\mathcal{X}=[0,1]$ and elements $\tilde{g}_j \in \mathcal{G}$ 
	such that $g_m=\sum_{i=1}^m \bm{1}_{I_j} \tilde{g}_j\in \mathcal{F}$.
\end{definition}

Before stating the general oracle inequality, recall that a function class $\mathcal{F}$ defined on 
$\mathcal{X}=[0,1]$ is called VC-major if the sets $\{x \in \mathcal{X}: f(x)\geq t\}$ with $f$ ranging over 
$\mathcal{F}$ and $t$ over $\R$ form a VC-class of sets.

\begin{theorem}\label{thm:l_2_shape_constraint_oracle_ineq}
	Consider the regression model (\ref{regression_model}) and the LSE $\hat{f}_n$ in (\ref{lse}). 
	Suppose that $\pnorm{f_0}{\infty}\vee \pnorm{f_0^\ast}{\infty}<\infty$, and that $\xi_1,\ldots,\xi_n$ 
	are mean zero errors independent of i.i.d.  covariates $X_i$'s with $\pnorm{\xi_1}{2,1}<\infty$. 
	Further assume that: 
(i) $\mathcal{F}$ satisfies a convexity-based shape constraint, and 
	$\mathcal{F}\cap L_\infty(B)$ is a VC-major class for any $B>0$, and 
(ii) $\pnorm{\hat{f}_n}{\infty}=\mathcal{O}_{\mathbf{P}}(1)$. Then for any $\delta \in (0,1)$, 
there exists $c\equiv c(\delta,\pnorm{\xi}{2,1},\mathcal{F},\pnorm{f_0}{\infty}, \pnorm{f_0^\ast}{\infty})>0$ 
such that with probability $1-\delta$,
	\begin{align*}
	\pnorm{\hat{f}_n-f_0^\ast}{L_2(P)}^2 \leq c \inf_{m \in \N}  
	\left(\inf_{f_m \in \mathcal{G}_m\cap L_\infty(\pnorm{f_0^\ast}{\infty})}\pnorm{f_m-f_0^\ast}{L_2(P)}^2+\frac{m  }{n} \cdot \log^{2} n\right),
	\end{align*}
	where $f_0^\ast = \argmin_{g \in \mathcal{F}\cap L_2(P)} \pnorm{f_0-g}{L_2(P)}$, and $\mathcal{G}_m$ is 
	an $m$-th order adaptive subset of $\mathcal{F}$.
\end{theorem}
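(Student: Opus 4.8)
The plan is to mimic the two-step scheme of Section~\ref{section:isotonic_reg}: first localize $\hat{f}_n$ in $L_\infty$ norm, then control a multiplier empirical process centered at the projection $f_0^\ast$, the new ingredient being a \emph{change-of-center} estimate exploiting the convexity of $\mathcal{F}$ and the interval-partition structure of the $m$-th order adaptive subsets $\mathcal{G}_m$. \emph{Localization.} Fix $\delta\in(0,1)$. Hypothesis~(ii) provides $B=B(\delta,\ldots)>0$ with $\Prob\big(\pnorm{\hat{f}_n}{\infty}\le B\big)\ge 1-\delta/3$; on this event $\hat{f}_n$ and every candidate $f_m\in\mathcal{G}_m\cap L_\infty(\pnorm{f_0^\ast}{\infty})$ belong to $\mathcal{F}_B\equiv\mathcal{F}\cap L_\infty(B)$, which is VC-major by hypothesis~(i), so its localized subclasses $\mathcal{F}_B(r)\equiv\{f\in\mathcal{F}_B:\pnorm{f-f_0^\ast}{L_2(P)}\le r\}$ satisfy the uniform entropy estimate of Lemma~\ref{lem:subset_VC} (a VC-type bound up to a logarithmic factor involving the $L_2$-size of the envelope). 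All subsequent estimates are made on this event.

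\emph{Basic inequality and reduction.} Since $\hat{f}_n$ minimizes the empirical risk over $\mathcal{F}$ and $f_0^\ast\in\mathcal{F}\cap L_2(P)$ (convexity of $\mathcal{F}$ and $\pnorm{f_0^\ast}{\infty}<\infty$), expanding the basic inequality at $g=f_0^\ast$ and using the variational inequality $\iprod{f-f_0^\ast}{f_0-f_0^\ast}_{L_2(P)}\le 0$ characterizing the $L_2(P)$-projection onto the convex set $\mathcal{F}\cap L_2(P)$ gives
\begin{align*}
\pnorm{\hat{f}_n-f_0^\ast}{L_2(\Prob_n)}^2\le 2(\Prob_n-P)\big((f_0-f_0^\ast)(\hat{f}_n-f_0^\ast)\big)+\frac{2}{n}\sum_{i=1}^n\xi_i(\hat{f}_n-f_0^\ast)(X_i).
\end{align*}
From here I would invoke the equivalence between the $L_2$ rate of the LSE and the size of its driving processes, exactly as in the proof of Theorem~\ref{thm:envelope_rate_upper_bound} (the proof of Proposition~2 of \cite{han2017sharp}), together with the moment inequality Lemma~\ref{lem:p_moment_estimate} and the contraction principle --- which, thanks to the $L_\infty$-localization, passes from $(f-f_0^\ast)^2$ back to $(f-f_0^\ast)$ at the cost of a factor $\lesssim B$ and also handles the $(\Prob_n-P)$ term above. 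This reduces the problem, for each $m$, each candidate $f_m$ and each radius $r>0$, to bounding the first moments $\E\sup_{f\in\mathcal{F}_B(r)}\bigabs{\frac{1}{\sqrt n}\sum_{i=1}^n\xi_i(f-f_0^\ast)(X_i)}$ and $\E\sup_{f\in\mathcal{F}_B(r)}\bigabs{\frac{1}{\sqrt n}\sum_{i=1}^n\epsilon_i(f-f_0^\ast)(X_i)}$, modulo a bias term $\pnorm{f_m-f_0^\ast}{L_2(P)}^2$.

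\emph{Change of center (the crux).} Fix $m$ and take $f_m=\sum_{j=1}^m\bm{1}_{I_j}\tilde{g}_j$ with $\tilde{g}_j\in\mathcal{G}$ as in Definition~\ref{def:basic_adaptive_set}; decompose $f-f_0^\ast=(f-f_m)+(f_m-f_0^\ast)$. The fixed increment contributes at most $\pnorm{\xi_1}{2}\pnorm{f_m-f_0^\ast}{L_2(P)}$ (a sum of i.i.d.\ centered terms). For the variable increment, on each piece $I_j$ one has $f-f_m=f-\tilde{g}_j$ with $f-\tilde{g}_j\in\mathcal{F}$ by the basic-adaptive-subset property, and, writing $\rho_j\equiv\pnorm{(f-\tilde{g}_j)\bm{1}_{I_j}}{L_2(P)}$, the partition identity gives $\sum_j\rho_j^2=\pnorm{f-f_m}{L_2(P)}^2\lesssim r^2+\pnorm{f_m-f_0^\ast}{L_2(P)}^2$. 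Because $\mathcal{F}$ obeys a convexity-based shape constraint, the relevant localized subclass of $\mathcal{F}$ restricted to $I_j$ admits the convex envelope $F(\cdot)$; convexity of the envelope together with the $L_\infty$-truncation at $B$ confines $\{F\ge t\}$ to at most two intervals abutting the endpoints, and a direct computation --- the content of the change-of-center lemma (Lemma~\ref{lem:l_2_shape_constraint_mep_est}, with the explicit form \eqref{eqn:envelope} in the isotonic and convex cases, cf.\ Lemma~\ref{lem:convex_1dim_envelope}) --- bounds the $L_2(P)$-size of the truncated envelope on $I_j$ by $\lesssim\rho_j\sqrt{\log(1/\rho_j)}$. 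The Koltchinskii--Pollard inequality Lemma~\ref{lem:KP_maximal_ineq} (whose uniform entropy integral for the VC-major class contributes a further $\sqrt{\log(1/\rho_j)}$ via Lemma~\ref{lem:subset_VC}) followed by the multiplier inequality Lemma~\ref{lem:multiplier_ineq} with $\psi_n(k)\asymp\sqrt{k}\,\rho_j\log(1/\rho_j)$ then bounds the $j$-th piece of the $\xi$-multiplier process by $\lesssim_\xi\rho_j\log(1/\rho_j)$; summing over $j$ and Cauchy--Schwarz over the $m$ pieces yields
\begin{align*}
\E\sup_{f\in\mathcal{F}_B(r)}\biggabs{\frac{1}{\sqrt n}\sum_{i=1}^n\xi_i(f-f_0^\ast)(X_i)}\lesssim_\xi\pnorm{f_m-f_0^\ast}{L_2(P)}+\sqrt{m}\,\big(r\vee\pnorm{f_m-f_0^\ast}{L_2(P)}\big)\log n,
\end{align*}
together with a parallel, easier bound for the Rademacher process.

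\emph{Assembly.} Inserting these estimates into the equivalence machinery of the previous step and using AM--GM to absorb $\sqrt{m}\,\pnorm{\hat{f}_n-f_0^\ast}{L_2(P)}\log n/\sqrt n$ into $\tfrac12\pnorm{\hat{f}_n-f_0^\ast}{L_2(P)}^2+c\,(m/n)\log^2 n$ gives, for each $m$ and each $f_m\in\mathcal{G}_m\cap L_\infty(\pnorm{f_0^\ast}{\infty})$,
\begin{align*}
\pnorm{\hat{f}_n-f_0^\ast}{L_2(P)}^2\lesssim\pnorm{f_m-f_0^\ast}{L_2(P)}^2+(m/n)\log^2 n
\end{align*}
in expectation; Talagrand's concentration inequality Lemma~\ref{lem:talagrand_conc_ineq} upgrades the relevant expectation bounds to probability $\ge 1-\delta/3$ each, folding the $\log(1/\delta)$ tails into $c$. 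Taking the infimum over $f_m$ and then over $m\in\N$ --- the optimal $m$ depends only on $f_0^\ast$ and $\mathcal{F}$, so no union bound over $m$ is needed --- completes the proof. I expect the main obstacle to be the change-of-center step: transferring the convex localized envelopes $F(\delta)$ of $\mathcal{F}$ (which are centered at $0$) to a multiplier process centered at $f_0^\ast$ forces one to pass through a piecewise-simple proxy $f_m$, to decompose along its interval partition, and to control the per-piece radii $\rho_j$; a secondary nuisance is the logarithmic bookkeeping --- one $\log n$ from truncating the (otherwise unbounded, $x^{-1/2}$-type) envelope and one from the VC-major entropy estimate --- which together yield the $\log^2 n$ in the statement.
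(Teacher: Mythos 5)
Your overall scheme matches the paper's: localize $\hat{f}_n$ in $L_\infty$ via hypothesis~(ii), reduce the $L_2(P)$ rate to bounding localized multiplier and Rademacher processes centered at $f_0^\ast$, shift the center from $f_0^\ast$ to a piecewise proxy $f_m$ along the interval partition supplied by Definition~\ref{def:basic_adaptive_set}, control per-piece envelopes via the convexity-based shape constraint and the VC-major entropy bound Lemma~\ref{lem:subset_VC}, feed the $L_{2,1}$ moment through the multiplier inequality Lemma~\ref{lem:multiplier_ineq}, and assemble via the quadratic inequality. The final announced estimate for the multiplier process agrees with Lemma~\ref{lem:l_2_shape_constraint_mep_est}. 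However, the crux as you sketch it contains a genuine gap.

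The per-piece radii $\rho_j\equiv\pnorm{(f-\tilde{g}_j)\bm{1}_{I_j}}{L_2(P)}$ are \emph{functions of $f$}, and you cannot treat them as fixed once you take the supremum over $f\in\mathcal{F}_B(r)$. After bounding $\E\sup_f\bigabs{\sum_j a_j(f)}\leq\sum_j\E\sup_f\bigabs{a_j(f)}$, the supremum on each piece runs over the \emph{entire} localized ball, so the relevant per-piece envelope is that of $\{f|_{I_j}\in\mathcal{F}|_{I_j}:Pf^2\leq\tilde{\delta}_n^2\}$ with the \emph{uniform} radius $\tilde{\delta}_n\equiv\delta_n+\pnorm{f_m-f_0^\ast}{L_2(P)}$, not $\rho_j$. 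The constraint $\sum_j\rho_j^2\lesssim\tilde{\delta}_n^2$ is lost after the interchange and cannot be fed into a Cauchy--Schwarz over pieces; moreover, the $L_2(P_{I_j})$-size of the truncated envelope $F(\tilde{\delta}_n)\wedge B_0$ on $I_j$ depends on the \emph{position} of $I_j$ (whether it abuts an endpoint of $[0,1]$), not on $\rho_j$. What actually makes the count come out right is the weighting by the proportion $\sqrt{k_j/k}$ of design points falling in each interval, together with Cauchy--Schwarz over $j$ in the form $\sum_j\sqrt{k_j/k}\,c_j\leq\sqrt{m}\big(\sum_j(k_j/k)c_j^2\big)^{1/2}$, after which the explicit envelope $\tilde{\delta}_n\big(x^{-1/2}\vee(1-x)^{-1/2}\big)\wedge B_0$ produces the factor $\tilde{\delta}_n\sqrt{\log(1/\tilde{\delta}_n)}$ upon integrating over $[0,1]$ (this is the split into the index sets near and far from the endpoints in the proof of Lemma~\ref{lem:l_2_shape_constraint_mep_est}). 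Your $\rho_j$-accounting would need a nontrivial discretization of the radius profile $(\rho_1,\ldots,\rho_m)$ to even make sense, and it does not reproduce the position-dependent envelope estimate that drives the $\sqrt{\log(1/\tilde{\delta}_n)}$ gain.

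A secondary point: the paper's reduction scheme (Proposition~\ref{prop:lse}, a special case of Proposition~\ref{prop:additive_lse}) runs a peeling argument with Markov's inequality applied directly to the \emph{first} moments of the three localized processes, which is why only $\pnorm{\xi_1}{2,1}<\infty$ is needed; it does not invoke Lemma~\ref{lem:p_moment_estimate} or Talagrand at this stage. Your plan to import the second-moment machinery of Theorem~\ref{thm:envelope_rate_upper_bound} and then ``upgrade with Talagrand'' is both unnecessary and problematic: Lemma~\ref{lem:talagrand_conc_ineq} applies to uniformly bounded empirical processes, so it does not directly control the heavy-tailed $\xi$-multiplier process, and you would need an additional conditioning/unconditioning step that is never actually required on the paper's route.
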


The proof of Theorem \ref{thm:l_2_shape_constraint_oracle_ineq} will be deferred to the next 
subsection. We first use it to prove Theorem \ref{thm:isotonic_reg}. To this end, we only need to check: 
(i) the convexity-based shape constraint and VC-major condition of the isotonic and convex models; 
and (ii) the stochastic boundedness condition for the corresponding LSEs $\hat{f}_n$.

\begin{proof}[Proof of Theorem \ref{thm:isotonic_reg}]
	For the isotonic model $\mathcal{F}$, $\mathcal{F}$ is clearly convex, 
	and (\ref{eqn:envelope}) is an envelope for $\mathcal{F}(\delta)$ by the $L_2$ constraint 
	and monotonicity of the function class. Furthermore, it is clear by definition that 
	$\mathcal{F} \cap L_\infty(B)$ is VC-major. Similarly we can verify that the convex model 
	satisfies both the convexity-based shape constraint with the envelope (\ref{eqn:envelope}) 
	(cf. Lemma \ref{lem:convex_1dim_envelope}) and the VC-major condition.

	The stochastic boundedness of the isotonic and convex LSEs is established in the following lemma:
	\begin{lemma}\label{lem:uniform_bound_iso}
		If $\pnorm{f_0}{\infty}<\infty$ and $\pnorm{\xi_1}{2}<\infty$, then both the canonical isotonic and convex 
		LSEs are stochastically bounded:  $\pnorm{\hat{f}_n}{\infty}=\mathcal{O}_{\mathbf{P}}(1)$.
	\end{lemma}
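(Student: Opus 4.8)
The isotonic case will rest on the classical min--max representation. Order the design as $X_{(1)}<\cdots<X_{(n)}$ and write $Y_{(i)},\xi_{(i)}$ for the associated responses and errors; since the canonical $\hat f_n$ is nondecreasing and piecewise constant, $\pnorm{\hat f_n}{\infty}=\abs{\hat f_n(X_{(1)})}\vee\abs{\hat f_n(X_{(n)})}$, while the min--max formula gives $\hat f_n(X_{(1)})=\min_{1\le v\le n}\frac1v\sum_{j=1}^v Y_{(j)}$ and $\hat f_n(X_{(n)})=\max_{1\le u\le n}\frac{1}{n-u+1}\sum_{j=u}^n Y_{(j)}$. Using $\abs{f_0(X_{(j)})}\le\pnorm{f_0}{\infty}$, it therefore suffices to bound $\max_{1\le v\le n}\bigabs{\frac1v\sum_{j=1}^v\xi_{(j)}}$ and its right-end analogue. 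As the errors are independent of the covariates, $(\xi_{(1)},\dots,\xi_{(n)})$ has the same law as $(\xi_1,\dots,\xi_n)$, so one may work with running means of i.i.d.\ variables; and $\big(\frac1v\sum_{j\le v}\xi_j\big)_v$, indexed by \emph{decreasing} $v$, is a reverse martingale by exchangeability of $\xi_1,\dots,\xi_v$ given their sum. Doob's $L_2$ maximal inequality then bounds its maximum over $v\le n$ in $L_2$ by $2\pnorm{\xi_1}{2}$, a bound independent of $n$; the right-end term is symmetric. Hence $\pnorm{\hat f_n}{\infty}\le\pnorm{f_0}{\infty}+\mathcal{O}_{\mathbf{P}}(1)$ uniformly in $n$.

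The convex case is considerably harder---indeed the boundary behaviour of the convex LSE has been an open problem---and the plan is to separate a routine ``bulk'' estimate from a delicate ``boundary'' estimate. For the bulk, comparing $\hat f_n$ to the admissible competitor $f\equiv0$ yields the basic inequality $\sum_i(Y_i-\hat f_n(X_i))^2\le\sum_iY_i^2$, whence $\sum_i\hat f_n(X_i)^2\le4\sum_iY_i^2=\mathcal{O}_{\mathbf{P}}(n)$ using $\pnorm{f_0}{\infty}<\infty$ and $\pnorm{\xi_1}{2}<\infty$; thus $\pnorm{\hat f_n}{L_2(\Prob_n)}=\mathcal{O}_{\mathbf{P}}(1)$. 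Fix $\eta\in(0,1/2)$ and take $X_{(j)}\in[\eta,1-\eta]$. If $\hat f_n(X_{(j)})=v>0$, then since the discrete slope of the convex $\hat f_n$ is nondecreasing, $\hat f_n$ is monotone on one of the two sides of $X_{(j)}$, so $\hat f_n\ge v$ at the $\gtrsim n\eta$ design points on that side, forcing $v^2 n\eta\lesssim\sum_i\hat f_n(X_i)^2=\mathcal{O}_{\mathbf{P}}(n)$, i.e.\ $v=\mathcal{O}_{\mathbf{P},\eta}(1)$; if $v<0$, a convex function that dips well below its interior value has its recovery slope forced by convexity and must then be large on a macroscopic block, so again the $L_2(\Prob_n)$ bound gives $\abs v=\mathcal{O}_{\mathbf{P},\eta}(1)$. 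The same overshoot mechanism, applied at the endpoints, also rules out $\hat f_n$ being very negative near $0$ or $1$. Throughout we only use $\#\{i:X_i\in I\}\gtrsim n\abs I$ for the relevant intervals, which holds with high probability by Glivenko--Cantelli. This gives $\sup_{x\in[\eta,1-\eta]}\abs{\hat f_n(x)}=\mathcal{O}_{\mathbf{P}}(1)$ for each fixed $\eta$, and reduces the whole problem to ruling out an \emph{upward} spike of $\hat f_n$ at the two endpoints.

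This last point is the crux: near the endpoints the envelope $F(\delta)$ in (\ref{eqn:envelope}) blows up, and the bulk argument only yields the useless bound $\mathcal{O}_{\mathbf{P}}(\sqrt n)$. The plan is to exploit first-order optimality of the convex LSE through ``corner-cutting'' competitors: replacing $\hat f_n$ on its first maximal linear piece $[X_{(1)},X_{(J)}]$ by the leftward extension of the next linear piece produces an admissible convex $\tilde f\le\hat f_n$ that agrees with $\hat f_n$ off that piece, so optimality forces $\sum_{i<J}(Y_{(i)}-\hat f_n(X_{(i)}))^2\le\sum_{i<J}(Y_{(i)}-\tilde f(X_{(i)}))^2$. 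Combined with the global stationarity conditions for the residuals $\hat\xi_i\equiv Y_i-\hat f_n(X_i)$, namely $\sum_i\hat\xi_i=\sum_i\hat\xi_iX_i=0$ (and hence $\sum_i\hat\xi_i\abs{X_i-t}\le0$ for all $t$, obtained by testing against the convex hinges $(x-t)_+$), and with the bulk bound, this pins down the slope and intercept of the first linear piece; an identical argument at $X_{(n)}$ handles the right endpoint, and convexity then controls $\hat f_n$ on $[\eta,1-\eta]$ from its endpoint and interior values. Heavy tails enter only through partial-sum-type error functionals such as $\sup_t\bigabs{\sum_i\xi_i\,\mathrm{sgn}(X_i-t)}$ and $\sup_t\bigabs{\sum_i\xi_i\abs{X_i-t}}$, which are $\mathcal{O}_{\mathbf{P}}(\sqrt n)$ under only a second moment by the same reverse-martingale maximal inequality used in the isotonic case---crucially, without invoking the multiplier inequality or an $L_{2,1}$ moment.

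The main obstacle is precisely this last step: turning the local optimality of the first and last linear pieces into a \emph{quantitative} endpoint bound $\abs{\hat f_n(X_{(1)})}\vee\abs{\hat f_n(X_{(n)})}=\mathcal{O}_{\mathbf{P}}(1)$ that survives a mere second moment on the errors---the lack of a closed form (unlike the monotone case) and the blow-up of the envelope at the boundary are exactly why a naive entropy / empirical-process argument does not suffice and why this was previously open. Everything else in the lemma is, by comparison, bookkeeping: assembling the bulk estimate, the endpoint estimate, and convexity to conclude $\pnorm{\hat f_n}{\infty}=\mathcal{O}_{\mathbf{P}}(1)$.
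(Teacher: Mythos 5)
Your treatment of the isotonic case is correct and takes a genuinely different (and arguably cleaner) route: the paper also reduces to $\max_{k\le n}\bigabs{\tfrac1k\sum_{i\le k}\xi_i}$ via the min--max formula, but bounds it with a dyadic blocking argument plus Montgomery--Smith's L\'evy-type maximal inequality; your observation that $\bigl(\tfrac1v\sum_{j\le v}\xi_j\bigr)_v$ is a reverse martingale (by exchangeability of $\xi_1,\dots,\xi_v$ given their sum) combined with Doob's $L_2$ maximal inequality gives the same $\mathcal{O}_{\mathbf P}(1)$ conclusion directly and with a cleaner constant. Your bulk estimate for the convex case also parallels the paper's: the paper proves $\limsup_n\sup_{x\in[1/4,3/4]}\abs{\hat f_n(x)}<\infty$ a.s.\ by the same basic inequality against a trivial competitor, a geometric overshoot argument (a large dip forces a large value on a macroscopic block), and the LLN, while you reach the same conclusion via $\pnorm{\hat f_n}{L_2(\Prob_n)}=\mathcal O_{\mathbf P}(1)$, Markov, and convexity. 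Your step of lower-bounding $\hat f_n(0),\hat f_n(1)$ from the bulk bound via the chord through $(1/4,\hat f_n(1/4))$ and $(3/4,\hat f_n(3/4))$ is precisely the paper's argument.

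However, there is a genuine gap, and you flag it yourself: the upper bound on $\hat f_n$ at the boundary. You gesture at the right mechanism---the paper uses Lemma~\ref{lem:char_cvx_reg} (the Groeneboom--Jongbloed--Wellner characterization, which is exactly what your ``corner-cutting competitor'' encodes) to deduce that at the first kink $\tau_n$ one has $R_{\tau_n-1}\le S_{\tau_n-1}$, hence the explicit convex-combination bound (\ref{ineq:upper_bound_cvx_origin_3})
\begin{align*}
\hat f_n(0)\;\le\;\frac{1}{1-\beta_{\tau_n}}\cdot\frac{\sum_{i<\tau_n}Y_i}{\tau_n-1}\;+\;\frac{\beta_{\tau_n}}{1-\beta_{\tau_n}}\,\bigabs{\inf_{x\in[0,1]}\hat f_n(x)},
\end{align*}
where $\beta_k=\bigl(\tfrac{1}{k-1}\sum_{i<k}X_i\bigr)/X_k$. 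The delicate point is that the weight $(1-\beta_{\tau_n})^{-1}$ can be large, and the paper controls $\max_{2\le k\le n}(1-\beta_k)^{-1}=\mathcal O_{\mathbf P}(1)$ via a conditional order-statistics/volume computation (\ref{ineq:upper_bound_cvx_origin_4}). Your proposal never produces this quantitative estimate; it only records the stationarity conditions $\sum_i\hat\xi_i=\sum_i\hat\xi_i X_i=0$ and the hinge inequalities, which by themselves do not bound $\hat f_n(X_{(1)})$ under a second moment. Since you explicitly state ``the main obstacle is precisely this last step,'' the convex half of the lemma is not proved by the proposal. In short: isotonic case proved (differently and nicely); convex bulk estimate proved; convex boundary upper bound missing, and that is exactly the step Lemma~\ref{lem:char_cvx_reg} together with the control of $(1-\beta_{\tau_n})^{-1}$ supplies in the paper.
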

	For the isotonic LSE, we use an explicit min-max representation (cf. \cite{robertson1988order}) 
	to prove this lemma, while for the convex LSE, the explicit characterization of the convex LSE derived in 
	\cite{groeneboom2001estimation} plays a crucial role. The details of the proof of this lemma can be 
	found in Section \ref{section:proof_remaining}. Now the claim of Theorem \ref{thm:isotonic_reg} follows 
	from Theorem \ref{thm:l_2_shape_constraint_oracle_ineq}, by noting that $\pnorm{f_0^\ast}{\infty}<\infty$ 
	under $\pnorm{f_0}{\infty}<\infty$, and that 
	$\inf_{f_m \in \mathcal{G}_m\cap L_\infty(\pnorm{f_0^\ast}{\infty})} \pnorm{f_m-f_0^\ast}{L_2(P)}^2
	 =\inf_{f_m \in \mathcal{G}_m} \pnorm{f_m-f_0^\ast}{L_2(P)}^2$ for isotonic model, 
	 and the same holds for the convex 
	model when $L_\infty(\pnorm{f_0^\ast}{\infty}$ is replaced by $L_\infty(C\pnorm{f_0^\ast}{\infty})$ for 
	some large enough $C>0$.
\end{proof}

\subsubsection{Proof of Theorem \ref{thm:l_2_shape_constraint_oracle_ineq}}

The first ingredient of the proof is the following proposition relating the convergence rate of 
$\hat{f}_n$ to the size of localized empirical processes.

\begin{proposition}\label{prop:lse}
	Consider the regression model (\ref{regression_model}) and the least squares estimator 
	$\hat{f}_n$ in (\ref{lse}). Suppose that $\xi_1,\ldots,\xi_n$ are mean-zero random variables independent of 
	$X_1,\ldots,X_n$, and $\mathcal{F}$ is convex with $\mathcal{F}-f_0^\ast\subset L_\infty(1)$.  Further assume that
	\begin{align}\label{cond:lse_generic}
	\E \sup_{f \in \mathcal{F}:\pnorm{f-f_0^\ast}{L_2(P)}\leq \delta} \biggabs{\frac{1}{\sqrt{n}}
	       \sum_{i=1}^n \xi_i(f-f_0^\ast)(X_i) }&\lesssim \phi_n(\delta),\nonumber\\
	\E \sup_{f \in \mathcal{F}:\pnorm{f-f_0^\ast}{L_2(P)}\leq \delta} \biggabs{\frac{1}{\sqrt{n}}
	       \sum_{i=1}^n \epsilon_i(f-f_0^\ast)(X_i) }&\lesssim \phi_n(\delta),\\
	\E \sup_{f \in \mathcal{F}:\pnorm{f-f_0^\ast}{L_2(P)}\leq \delta } \biggabs{\frac{1}{\sqrt{n}}
	       \sum_{i=1}^n \epsilon_i(f-f_0^\ast)(X_i)(f_0-f_0^\ast)(X_i) }&\lesssim \phi_n(\delta),\nonumber
	\end{align}
	hold for some $\phi_n$ such that $\delta\mapsto \phi_n(\delta)/\delta$ is non-increasing. 
	Then  \newline $\pnorm{\hat{f}_n-f_0^\ast}{L_2(P)}=\mathcal{O}_{\mathbf{P}}(\delta_n)$
	holds for any $\delta_n$ such that $\phi_n(\delta_n)\leq \sqrt{n}\delta_n^2$. 
\end{proposition}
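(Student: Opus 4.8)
The plan is to run the classical basic-inequality-plus-peeling argument for least-squares rates, modified to handle the fact that in general $f_0\notin\mathcal{F}$, so that the natural centering is the $L_2(P)$-projection $f_0^\ast$ of $f_0$ onto $\mathcal{F}\cap L_2(P)$ rather than $f_0$ itself. Writing $Y_i-f_0^\ast(X_i)=\xi_i+(f_0-f_0^\ast)(X_i)$ and using $\sum_{i=1}^n(Y_i-\hat{f}_n(X_i))^2\le\sum_{i=1}^n(Y_i-f_0^\ast(X_i))^2$, one rearranges to
\begin{align*}
\pnorm{\hat{f}_n-f_0^\ast}{L_2(\Prob_n)}^2\le\frac{2}{n}\sum_{i=1}^n\xi_i(\hat{f}_n-f_0^\ast)(X_i)+\frac{2}{n}\sum_{i=1}^n(f_0-f_0^\ast)(X_i)(\hat{f}_n-f_0^\ast)(X_i).
\end{align*}
The second sum I would split as $2P[(f_0-f_0^\ast)(\hat{f}_n-f_0^\ast)]+2(\Prob_n-P)[(f_0-f_0^\ast)(\hat{f}_n-f_0^\ast)]$. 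Since $f_0^\ast$ minimizes $\pnorm{f_0-g}{L_2(P)}$ over the convex set $\mathcal{F}\cap L_2(P)$, the first-order variational inequality gives $P[(f_0-f_0^\ast)(f-f_0^\ast)]\le0$ for every $f\in\mathcal{F}\cap L_2(P)$; this kills the population cross-term and leaves only a centered empirical process indexed by $\mathcal{F}$, which on the localized classes is exactly what the third bound in (\ref{cond:lse_generic}) controls after symmetrization.

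For the peeling, fix $\delta_n$ with $\phi_n(\delta_n)\le\sqrt{n}\,\delta_n^2$, write $\mathcal{F}_0(r)\equiv\{f-f_0^\ast:f\in\mathcal{F},\ \pnorm{f-f_0^\ast}{L_2(P)}\le r\}$, and consider the shells $\mathcal{S}_j\equiv\{f\in\mathcal{F}:2^j\delta_n\le\pnorm{f-f_0^\ast}{L_2(P)}<2^{j+1}\delta_n\}$. On $\{\hat{f}_n\in\mathcal{S}_j\}$ one has $\hat{f}_n-f_0^\ast\in\mathcal{F}_0(2^{j+1}\delta_n)$, so the right-hand side of the basic inequality is dominated by $\sup_{g\in\mathcal{F}_0(2^{j+1}\delta_n)}$ of the corresponding processes: the multiplier term is bounded in expectation by a constant times $\phi_n(2^{j+1}\delta_n)/\sqrt{n}$ via the first bound in (\ref{cond:lse_generic}), and the residual centered empirical term likewise via symmetrization and the third bound. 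For the left-hand side I would invoke a ratio/localization inequality: since $\mathcal{F}-f_0^\ast\subset L_\infty(1)$, the symmetrization inequality together with the contraction principle applied to $u\mapsto u^2$ on $[-1,1]$ yields $\E\sup_{g\in\mathcal{F}_0(r)}\abs{(\Prob_n-P)g^2}\lesssim\phi_n(r)/\sqrt{n}$ (using the second bound in (\ref{cond:lse_generic})), so that on $\mathcal{S}_j$, with probability at least $1-C\phi_n(2^{j+1}\delta_n)/(\sqrt{n}\,(2^j\delta_n)^2)$, one has $\pnorm{\hat{f}_n-f_0^\ast}{L_2(\Prob_n)}^2\ge\tfrac{1}{2}(2^j\delta_n)^2$.

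Combining the two sides through Markov's inequality gives $\Prob(\hat{f}_n\in\mathcal{S}_j)\lesssim\phi_n(2^{j+1}\delta_n)/(\sqrt{n}\,(2^j\delta_n)^2)$; the assumed monotonicity of $\delta\mapsto\phi_n(\delta)/\delta$ gives $\phi_n(2^{j+1}\delta_n)\le2^{j+1}\phi_n(\delta_n)\le2^{j+1}\sqrt{n}\,\delta_n^2$, so this bound is $\lesssim2^{-j}$, and summing over $j\ge K$ yields $\Prob(\pnorm{\hat{f}_n-f_0^\ast}{L_2(P)}\ge2^K\delta_n)\lesssim2^{-K}$, which is the asserted $\mathcal{O}_{\mathbf{P}}(\delta_n)$ bound. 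I expect the main obstacle to be the change-of-center step: one must simultaneously exploit convexity of $\mathcal{F}$ to make the population cross-term non-positive and control the empirical fluctuation of the product $(f-f_0^\ast)(f_0-f_0^\ast)$ — which is precisely why the somewhat unusual third process is postulated in (\ref{cond:lse_generic}) — while still keeping the boundedness $\mathcal{F}-f_0^\ast\subset L_\infty(1)$ available for the contraction step in the ratio inequality. A minor point is the passage from the in-expectation hypotheses in (\ref{cond:lse_generic}) to the in-probability statements used in the peeling, but since only an $\mathcal{O}_{\mathbf{P}}$ conclusion is sought, Markov's inequality suffices and no concentration inequality is needed.
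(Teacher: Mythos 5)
Your proposal is correct and follows essentially the same route as the paper's proof of Proposition~\ref{prop:additive_lse} (of which Proposition~\ref{prop:lse} is a special case): the basic LSE inequality recentered at $f_0^\ast$, the first-order optimality/convexity step that kills the population cross-term $P[(f_0-f_0^\ast)(\hat f_n-f_0^\ast)]$, peeling over dyadic $L_2(P)$-shells, symmetrization plus contraction to reduce the quadratic term to the second process in~(\ref{cond:lse_generic}), and Markov to pass from the three expectation bounds to the tail sum. The only cosmetic difference is that you lower-bound $\pnorm{\hat f_n-f_0^\ast}{L_2(\Prob_n)}^2$ via a separate ratio step, whereas the paper absorbs $-(\Prob_n-P)(\hat f_n-f_0^\ast)^2$ directly into the supremum bound on the same shell; these are algebraically equivalent.
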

\begin{proof}
This is a special case of Proposition \ref{prop:additive_lse}, the proof of which will be given therein.
\end{proof}

By Proposition \ref{prop:lse}, we only need to control the size of the empirical processes 
(\ref{cond:lse_generic}) centered at $f_0^\ast$. The following lemma will be useful in this regard by 
approximating $f_0^\ast$ via arbitrary $f_m \in \mathcal{G}_m$.

\begin{lemma}\label{lem:l_2_shape_constraint_mep_est}
	Suppose that the hypotheses of Theorem \ref{thm:l_2_shape_constraint_oracle_ineq} hold. 
	Let  $\{\delta_n\}_{n \in \N}$ be a sequence of positive real numbers such that $\delta_n\geq 1/n$. 
	Then for any $f_m \in \mathcal{G}_m\cap L_\infty(\pnorm{f_0^\ast}{\infty})$ and $B>0$,
	\begin{align*}
	&\max \bigg\{ \E \sup_{f\in \mathcal{F}: f-f_0^\ast \in L_2(\delta_n)\cap L_\infty(B) }\biggabs{\frac{1}{\sqrt{n}}\sum_{i=1}^{n} \xi_i (f-f_0^\ast)(X_i)},\\
	&\qquad\qquad \E \sup_{f \in \mathcal{F}: f-f_0^\ast \in L_2(\delta_n)\cap L_\infty(B) }\biggabs{\frac{1}{\sqrt{n}}\sum_{i=1}^{n} \epsilon_i (f-f_0^\ast)(X_i)},\\
	&\qquad\qquad\qquad \E \sup_{f \in \mathcal{F}: f-f_0^\ast \in L_2(\delta_n)\cap L_\infty(B) }\biggabs{\frac{1}{\sqrt{n}}\sum_{i=1}^{n} \epsilon_i (f-f_0^\ast)(X_i)(f_0-f_0^\ast)(X_i)} \bigg\}\\
	&\qquad \leq C_{\mathcal{F},\pnorm{f_0}{\infty}, \pnorm{f_0^\ast}{\infty},B}\cdot \pnorm{\xi_1}{2,1} \sqrt{\log (1/\delta_n)}\bar{L}_n \cdot (\delta_n\vee \pnorm{f_m-f_0^\ast}{L_2(P)})\sqrt{m},
	\end{align*}
where $\bar{L}_n\equiv \sqrt{\log n}$.
\end{lemma}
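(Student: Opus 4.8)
The plan is to reduce each of the three processes to a family of $m$ ``one–sided'' multiplier processes, one for each interval of the partition attached to $f_m$, for which the localized envelope is explicit, and then to aggregate so that the $m$ intervals contribute only a factor $\sqrt{m}$. I will carry the multiplier process with the $\xi_i$'s; the Rademacher process is handled by the same scheme (it is in fact what the multiplier inequality below takes as input), and the third process differs only by the fixed bounded factor $f_0-f_0^\ast$, which merely rescales the envelope and uniform–entropy bounds by $\pnorm{f_0-f_0^\ast}{\infty}\le\pnorm{f_0}{\infty}+\pnorm{f_0^\ast}{\infty}$. For the change of center: since $\mathcal{G}_m$ is an $m$-th order adaptive subset, Definition \ref{def:basic_adaptive_set} furnishes for the given $f_m$ an interval partition $\{I_j=[a_j,b_j]\}_{j=1}^m$ of $[0,1]$ and elements $\tilde{g}_j$ of the basic adaptive subset $\mathcal{G}$ with $f_m=\sum_j\bm{1}_{I_j}\tilde{g}_j$ and $\pnorm{\tilde{g}_j\bm{1}_{I_j}}{\infty}\le\pnorm{f_m}{\infty}\le\pnorm{f_0^\ast}{\infty}$. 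Writing $f-f_0^\ast=(f-f_m)+(f_m-f_0^\ast)$, the contribution of the \emph{fixed} function $f_m-f_0^\ast$ is at most $\pnorm{\xi_1}{2}\pnorm{f_m-f_0^\ast}{L_2(P)}\le\pnorm{\xi_1}{2,1}\pnorm{f_m-f_0^\ast}{L_2(P)}$ by Cauchy--Schwarz, dominated by the claimed bound. On $I_j$ one has $(f-f_m)\bm{1}_{I_j}=(f-\tilde{g}_j)\bm{1}_{I_j}$, and because $\mathcal{F}-\mathcal{G}\subset\mathcal{F}$ the restriction of $f-\tilde{g}_j$ to $I_j$ is again monotone (resp.\ convex); moreover $\pnorm{(f-\tilde{g}_j)\bm{1}_{I_j}}{\infty}\le B':=B+2\pnorm{f_0^\ast}{\infty}$, and with $r_n:=\delta_n\vee\pnorm{f_m-f_0^\ast}{L_2(P)}$ one gets $\sum_{j=1}^m\pnorm{(f-\tilde{g}_j)\bm{1}_{I_j}}{L_2(P)}^2=\pnorm{f-f_m}{L_2(P)}^2\le 4r_n^2$.

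\emph{Single–interval bound.} Fix $j$ and $s>0$. The convexity–based shape constraint (Example \ref{ex:isotonic_regression}, resp.\ Lemma \ref{lem:convex_1dim_envelope}), rescaled to $I_j$, shows that the class $\mathcal{Q}_j(s)$ of functions $g\bm{1}_{I_j}$ with $g|_{I_j}$ monotone (resp.\ convex), $\pnorm{g\bm{1}_{I_j}}{L_2(P)}\le s$ and $\pnorm{g\bm{1}_{I_j}}{\infty}\le B'$ admits the envelope $\bm{1}_{I_j}(x)\big[cs\big((x-a_j)^{-1/2}\vee(b_j-x)^{-1/2}\big)\wedge B'\big]$, of $L_2(P)$-size $\lesssim_{B'}s\sqrt{\log(1/s)\vee e}$ --- the $L_\infty$ truncation being exactly what converts the logarithmically divergent integral into this factor. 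Since $\mathcal{Q}_j(s)$ is VC–major with VC index $O(1)$ (its superlevel sets meet $I_j$ in $O(1)$ intervals), Lemma \ref{lem:subset_VC} bounds its uniform entropy integral by a constant multiple of $\bar{L}_n=\sqrt{\log n}$ on the range $s\ge 1/n$ (the logarithmic factor in the VC–major entropy bound depending only on the polynomial-in-$n$ ratio $B'/s$). The Koltchinskii--Pollard inequality (Lemma \ref{lem:KP_maximal_ineq}) then gives, uniformly in $1\le k\le n$, a bound $C\sqrt{k}\,\bar{L}_n\,s\sqrt{\log(1/s)\vee e}$ for the $k$-sample Rademacher supremum; feeding this in as $\psi_n$ and invoking the multiplier inequality (Lemma \ref{lem:multiplier_ineq}) yields $Z_j(s):=\E\sup_{\mathcal{Q}_j(s)}\bigabs{n^{-1/2}\sum_i\xi_i g(X_i)\bm{1}_{I_j}(X_i)}\lesssim\bar{L}_n\,\pnorm{\xi_1}{2,1}\,s\sqrt{\log(1/s)\vee e}=:\zeta_n(s)$. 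The moment inequality Lemma \ref{lem:p_moment_estimate} with $q=2$ gives the matching second–moment bound: its three terms are $\zeta_n(s)^2$, the term $\pnorm{\xi_1}{2}^2 s^2$, and $n^{-1}\E\max_i\abs{\xi_i}^2\sup_g g(X_i)^2\le\pnorm{\xi_1}{2}^2\cdot(\text{envelope }L_2\text{-size})^2$ (using $\E\max_i(\cdot)\le\sum_i\E(\cdot)$), each of order $\zeta_n(s)^2$ up to constants in $B',\pnorm{\xi_1}{2,1}$.

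\emph{Aggregation.} It remains to bound $\E\sup_f\sum_{j=1}^m Z_j(\rho_j(f))$ subject to $\sum_j\rho_j(f)^2\le 4r_n^2$, where $\rho_j(f):=\pnorm{(f-\tilde{g}_j)\bm{1}_{I_j}}{L_2(P)}$. The naive route --- bound $Z_j(\rho_j(f))\le Z_j(2r_n)$ and sum --- costs a full factor $m$; to extract only $\sqrt{m}$ one must retain the radii $\rho_j(f)$ and exploit that $s\mapsto Z_j(s)/s$ is non-increasing (scaling the competing $g$'s down preserves all constraints). A constrained Cauchy--Schwarz at the level of the random per-interval processes then gives $\sup_f\sum_j Z_j(\rho_j(f))\le\sup_{\sum s_j^2\le 4r_n^2}\sum_j Z_j(s_j)$, whose (near-)maximizer is the common radius $s_j^\ast\asymp r_n/\sqrt{m}$; since $\sqrt{\log(1/s_j^\ast)}\asymp\sqrt{\log(1/\delta_n)}$ for $m\lesssim n$, inserting the second–moment bound from the previous step at $s=s_j^\ast$ and taking expectations produces $\E\sup_f\sum_j Z_j(\rho_j(f))\lesssim\bar{L}_n\,\pnorm{\xi_1}{2,1}\sqrt{\log(1/\delta_n)}\,r_n\sqrt{m}$. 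Together with the change–of–center step this is the asserted inequality. The convex case runs identically, with $\mathcal{G}$ the class of affine functions, ``monotone'' replaced by ``convex'', and Lemma \ref{lem:convex_1dim_envelope} supplying the envelope.

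\emph{Main obstacle.} The crux is the aggregation step. The $L_\infty$ truncation --- indispensable for keeping the envelope's $L_2(P)$-size finite --- destroys the exact cone structure that would make the constrained optimization $\sup_{\sum s_j^2\le 4r_n^2}\sum_j Z_j(s_j)$ transparent, so one is forced either into a careful Lagrange-type analysis or into a dyadic peeling over $\rho_j\in\{2^{-\ell}r_n\}$, using that at most $4^{\ell+1}$ of the $\rho_j$ can exceed $2^{-\ell-1}r_n$; keeping this honest while extracting precisely $\sqrt{m}$ (and not $\log m$ from the peeling) is where the real work lies. A secondary nuisance is tracking the two logarithmic sources --- the $\sqrt{\log(1/\delta_n)}$ from the truncated envelope and the $\bar{L}_n$ from the VC–major entropy estimate of Lemma \ref{lem:subset_VC} --- so that they multiply rather than compound into a higher power of $\log n$.
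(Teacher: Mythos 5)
Your change–of–center step, per–interval reduction, identification of the localized envelope, and the combination of Lemma~\ref{lem:subset_VC}, the Koltchinskii--Pollard bound, and the multiplier inequality are all aligned with the paper's proof. The genuine gap is in the aggregation, and it is substantial: you want to bound $\E\sup_f\sum_j W_j(\rho_j(f))$, where $W_j(s)$ is the random supremum over $\mathcal{Q}_j(s)$, by $\sup_{\sum s_j^2\le 4r_n^2}\sum_j \E W_j(s_j)$. That interchange of expectation and supremum over the vector of radii $\rho(f)$ is exactly what must be justified and is not: the near-maximizing radii $\rho_j(f)$ vary with $f$, hence with the data, so a pointwise bound on $\sum_j Z_j(s_j)$ for each fixed $s$ does not yield a bound on the expectation of the supremum. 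You flag this yourself (``keeping this honest \ldots\ is where the real work lies''), but the work is not carried out; as written the step does not go through.

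The paper sidesteps the constrained optimization entirely. It drops the per-interval $L_2$ budget $\sum_j\rho_j^2\le 4r_n^2$ and instead uses a single coarse radius $\tilde{\delta}_n$ on each $I_j$ (the global envelope $F(\tilde{\delta}_n)$ restricted to $I_j$, not a per-interval rescaled envelope). It then conditions on $k_j(\bm{X})=\sum_i\bm{1}_{I_j}(X_i)$, so the per-interval Rademacher suprema become genuine expectations over independent data given $\{k_j\}$; these contribute additively, and each is bounded (via Lemma~\ref{lem:subset_VC} and Koltchinskii--Pollard) by $\bar{L}_n\sqrt{P_{I_j}F^2_{I_j}(\tilde{\delta}_n)}$ with a prefactor $\sqrt{k_j/k}$. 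The factor $\sqrt{m}$ then comes from Cauchy--Schwarz applied to $\sum_j\sqrt{k_j/k}\cdot\sqrt{P_{I_j}F^2_{I_j}(\tilde{\delta}_n)}$, using $\E[k_j/k]=\abs{I_j}$; for the boundary indices $j$ with $I_j\subset[0,\tilde{\delta}_n^2]\cup[1-\tilde{\delta}_n^2,1]$ the envelope is just $B_0$ and $\sum_j\abs{I_j}\le 2\tilde{\delta}_n^2$, while for the middle indices the quantity $P_{I_j}F^2_{I_j}(\tilde{\delta}_n)\lesssim\abs{I_j}^{-1}\tilde{\delta}_n^2\log(x_{j+1}/x_j)$ telescopes to give $\sqrt{\log(1/\tilde{\delta}_n)}$. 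That telescoping is what replaces your Lagrange/dyadic-peeling plan. So the paper's aggregation is not the ``naive route'' you dismiss as costing a factor $m$: it is a different mechanism that produces $\sqrt{m}$ without the constrained optimization and without any interchange of $\E$ and $\sup_\rho$. To make your version rigorous you would need either a chaining argument uniform over the $\rho$-ball or the dyadic peeling you gesture at; as it stands, that is the missing piece.
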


To prove Lemma \ref{lem:l_2_shape_constraint_mep_est}, we need the following form of a multiplier inequality proved in Proposition 1 of \cite{han2017sharp}.
\begin{lemma}\label{lem:interpolation_inequality}
	Suppose that $\xi_1,\ldots,\xi_n$ are i.i.d. mean-zero random variables independent of i.i.d. $X_1,\ldots,X_n$. Then for any function class $\mathcal{F}$,
	\begin{align}\label{ineq:interpolation_cont_prin}
	\E \bigg\lVert \sum_{i=1}^n \xi_i f(X_i)\bigg\lVert_{\mathcal{F}}\leq  \E \left[ \sum_{k=1}^n  (\abs{\eta_{(k)}}-\abs{\eta_{(k+1)}}) \E\bigg\lVert\sum_{i=1}^k \epsilon_i f(X_i)\bigg\lVert_{\mathcal{F}}\right]
	\end{align}
	where $\abs{\eta_{(1)}}\geq \cdots \geq \abs{\eta_{(n)}}\geq \abs{\eta_{(n+1)}}\equiv 0$ are the reversed order statistics for  $\{\abs{\xi_i-\xi_i'}\}_{i=1}^n$ with $\{\xi_i'\}$ being an independent copy of $\{\xi_i\}$.
\end{lemma}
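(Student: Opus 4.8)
The plan is to establish the inequality by the classical desymmetrization argument followed by a layer-cake decomposition of the symmetrized multipliers, the one twist being that the \emph{layers} get re-indexed by the order statistics $\abs{\eta_{(k)}}$.

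First I would desymmetrize. Let $\{\xi_i'\}$ be an independent copy of $\{\xi_i\}$, also independent of $\{X_i\}$; since $\E\xi_i'=0$ we have $\sum_{i=1}^n \xi_i f(X_i)=\E\big[\sum_{i=1}^n (\xi_i-\xi_i') f(X_i)\mid \{\xi_i\},\{X_i\}\big]$, so Jensen's inequality for the convex functional $\phi\mapsto \pnorm{\phi}{\mathcal{F}}$ gives $\E\pnorm{\sum_i \xi_i f(X_i)}{\mathcal{F}}\le \E\pnorm{\sum_i(\xi_i-\xi_i')f(X_i)}{\mathcal{F}}$. The vector $(\xi_i-\xi_i')_{i=1}^n$ has independent, symmetric coordinates, hence the same law as $(\epsilon_i\eta_i)_{i=1}^n$ where $\eta_i:=\abs{\xi_i-\xi_i'}$ and the Rademacher variables $\{\epsilon_i\}$ are independent of both $\{\eta_i\}$ and $\{X_i\}$; thus it remains to bound $\E\pnorm{\sum_i \epsilon_i\eta_i f(X_i)}{\mathcal{F}}$.

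Next I would apply the layer-cake identity $\eta_i=\int_0^\infty \bm{1}\{\eta_i>t\}\,\d t$, interchange the integral with the finite sum, use the triangle inequality, and then Tonelli, to obtain
\[
\E\pnorm{\sum_{i=1}^n \epsilon_i\eta_i f(X_i)}{\mathcal{F}}\le \int_0^\infty \E\pnorm{\sum_{i:\,\eta_i>t}\epsilon_i f(X_i)}{\mathcal{F}}\,\d t .
\]
The crucial move is then to condition on $\{\eta_i\}_{i=1}^n$: for fixed $t$, $S_t:=\{i:\eta_i>t\}$ is a deterministic subset of $\{1,\dots,n\}$ of size $N(t):=\abs{S_t}$, and because $\{\eta_i\}$ is independent of $\{(X_i,\epsilon_i)\}$ the pairs $(X_i,\epsilon_i)_{i\in S_t}$ are i.i.d.\ and distributed as $(X_1,\epsilon_1),\dots,(X_{N(t)},\epsilon_{N(t)})$; hence, writing $\psi(k):=\E\pnorm{\sum_{i=1}^k \epsilon_i f(X_i)}{\mathcal{F}}$ with $\psi(0)=0$, the conditional expectation equals $\psi(N(t))$. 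Finally $t\mapsto N(t)$ is a nonincreasing integer-valued step function with $N(t)=k$ precisely for $t\in[\abs{\eta_{(k+1)}},\abs{\eta_{(k)}})$ (conventions $\abs{\eta_{(0)}}:=\infty$, $\abs{\eta_{(n+1)}}:=0$), so because $\psi(0)=0$ the region $\{N(t)=0\}$ contributes nothing and $\int_0^\infty \psi(N(t))\,\d t=\sum_{k=1}^n \big(\abs{\eta_{(k)}}-\abs{\eta_{(k+1)}}\big)\psi(k)$ for every realization of $\{\eta_i\}$. Taking expectations and noting each $\psi(k)$ is deterministic gives exactly the claimed bound.

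The main obstacle is making the conditioning/exchangeability step rigorous: one must check that, conditionally on the multiplier values, the sub-collection of design--sign pairs with $\eta_i>t$ behaves exactly like a fresh i.i.d.\ sample of the random size $N(t)$. This is where the independence of $\{\eta_i\}$ (a function of $\{\xi_i,\xi_i'\}$ only) from $\{(X_i,\epsilon_i)\}$, together with the i.i.d.\ structure, is genuinely used, and it is what replaces the contraction-type manipulations of more standard multiplier inequalities. A minor loose end is the treatment of ties among the $\eta_i$ and the $>$ versus $\ge$ convention, but this is a.s.\ vacuous for continuous error laws and in any case is absorbed harmlessly by the layer-cake identity; and, as usual, if the right-hand side is infinite there is nothing to prove.
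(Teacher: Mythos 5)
Your argument is correct, and it is essentially the argument underlying Proposition~1 of the cited reference \cite{han2017sharp}, which this paper invokes for the lemma without reproducing the proof. The three ingredients you isolate — desymmetrization via Jensen with $\xi_i-\xi_i'\overset{d}{=}\epsilon_i\eta_i$ (sign independent of modulus by symmetry), the layer-cake identity $\eta_i=\int_0^\infty\bm{1}\{\eta_i>t\}\,\d t$ with Tonelli, and then conditioning on $\{\eta_i\}$ so that, by independence of $\{\eta_i\}$ from $\{(X_i,\epsilon_i)\}$ and exchangeability of the latter, the conditional expectation equals $\psi(N(t))$ — are exactly right, and the final rewriting $\int_0^\infty\psi(N(t))\,\d t=\sum_{k=1}^n\big(\abs{\eta_{(k)}}-\abs{\eta_{(k+1)}}\big)\psi(k)$ is the correct partition of $[0,\infty)$ by the level sets of the step function $N(\cdot)$, valid even with ties since repeated order statistics produce empty intervals.
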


	The following entropy estimate for bounded VC-major classes will be useful.
	\begin{lemma}\label{lem:subset_VC}
		Let $\mathcal{F}_0\subset L_\infty(1)$ be a VC-major class defined on $\mathcal{X}$. Then there exists some constant $C\equiv C_{\mathcal{F}_0}>0$ such that for any $\mathcal{F}\subset \mathcal{F}_0$, and any probability measure $Q$, the entropy estimate
		\begin{align*}
		 &\log \mathcal{N}\big(\epsilon \pnorm{F}{L_2(Q)}, \mathcal{F}, L_2(Q) \big) \leq \frac{C}{\epsilon} \log\bigg(\frac{C}{\epsilon}\bigg)  \log\bigg(\frac{1}{\epsilon \pnorm{F}{L_2(Q)}}\bigg),\textrm{ for all }\epsilon \in (0,1)
		\end{align*}
		holds for any envelope $F$ of $\mathcal{F}$.
	\end{lemma}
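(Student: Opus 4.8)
\medskip
\noindent\textbf{Proof proposal for Lemma \ref{lem:subset_VC}.}

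The plan is to represent each $f\in\mathcal F\subset\mathcal F_0$ as a ``layer cake'' built from a fixed VC class of sets, to slice it dyadically according to height, to bound the entropy of each slice by discretizing its (short) height range and then covering the small, envelope--restricted class of VC sets it lives on, and finally to reassemble the pieces. First I would reduce to nonnegative functions: writing $f=f^{+}-f^{-}$ with $f^{\pm}$ taking values in $[0,1]$, one has $\{x:f^{+}(x)\ge t\}=\{f\ge t\}$ and $\{x:f^{-}(x)\ge t\}=\{f\le -t\}$ for $t>0$, and both families range over VC classes of sets --- the first by the VC-major hypothesis on $\mathcal F_0$, the second because the VC-major property passes to $-\mathcal F_0$ (this is immediate in the isotonic and convex applications, and holds in general since a sublevel set $\{f\le -t\}$ is the complement of a strict superlevel set, and strict superlevel sets lie in the pointwise closure of the class $\{\{g\ge u\}\}$, which has the same VC index). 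Since $F$ is an envelope for $\{f^{+}\}$ and for $\{f^{-}\}$ and $\|f-g\|_{L_2(Q)}\le \|f^{+}-g^{+}\|_{L_2(Q)}+\|f^{-}-g^{-}\|_{L_2(Q)}$, it suffices to treat a class of nonnegative functions bounded by $1$, with envelope $F\le 1$, all of whose superlevel sets $\{f\ge t\}$ ($t>0$) lie in a fixed VC class $\mathcal C$ of index $V$, the index of $\mathcal F_0$. Fix a probability measure $Q$, abbreviate $q:=\pnorm{F}{L_2(Q)}\in(0,1]$, and recall the layer-cake identity $f=\int_0^1 \bm{1}_{\{f\ge t\}}\,\d{t}$ together with $\{f\ge t\}\subseteq\{F\ge t\}$.

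Next I would slice $f$ dyadically in height: $f=\sum_{k\ge0}f_k$ with $f_k:=(f\wedge 2^{-k})-(f\wedge 2^{-k-1})$. Each $f_k$ is again of layer-cake type --- one checks $\{f_k\ge s\}=\{f\ge 2^{-k-1}+s\}\in\mathcal C$ for $0<s<2^{-k-1}$ --- is bounded by $2^{-k-1}$, and vanishes off $D_k:=\{F>2^{-k-1}\}$, whose measure satisfies $Q(D_k)\le \min(2^{2(k+1)}q^2,1)$ by Markov's inequality. The crucial quantitative observation, extracted from the layer-cake formula $q^2=\int_0^1 Q(F>t)\,2t\,\d{t}$, is that the ``slice sizes'' $b_k:=2^{-k-1}\sqrt{Q(D_k)}$ are $\ell^2$-summable with $\sum_{k\ge0}b_k^2\lesssim q^2$. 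Moreover $\sum_{k>K}f_k=f\wedge 2^{-K-1}$ has $L_2(Q)$-norm at most $2^{-K-1}$, so for $K\asymp \log(1/(\epsilon q))$ the tail beyond $K$ slices may be discarded at a cost $\lesssim \epsilon q$ in $L_2(Q)$.

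The remaining ingredient is a per-slice entropy bound. For the class $\mathcal G_{h,D}$ of layer-cake functions with values in $[0,h]$, superlevel sets in $\mathcal C$, vanishing off a fixed set $D$ with $Q(D)=m$, I would discretize the height interval $[0,h]$ into $\sim h\sqrt m/\eta$ equal pieces, which replaces each $g\in\mathcal G_{h,D}$ by an average of $\sim h\sqrt m/\eta$ indicators of sets from $\mathcal C$ contained in $D$, up to $L_2(Q)$-error $\tfrac12\eta$; then I would replace each such set by a member of a $\rho$-net ($\rho\sim\eta/h$) of $\{A\in\mathcal C:A\subseteq D\}$ in $L_2(Q)$. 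Such a net has at most $(\sqrt m/\rho)^{c_V}$ elements: rescaling $Q$ to the probability measure $Q(\cdot\cap D)/m$ inflates the $L_2$-distance between these sets by $1/\sqrt m$, and Haussler's polynomial bound gives $\mathcal N(\tau,\mathcal C,L_2(Q'))\lesssim_V \tau^{-c_V}$ for every probability measure $Q'$. Counting the resulting approximants yields $\log\mathcal N(\eta,\mathcal G_{h,D},L_2(Q))\lesssim_V \frac{h\sqrt m}{\eta}\log\!\big(\frac{h\sqrt m}{\eta}\big)$. Applying this to each slice $f_k$ --- with $h=2^{-k-1}$, $m=Q(D_k)$, so $h\sqrt m=b_k$ --- at a scale $\eta_k$, taking products over $0\le k\le K$, and adding the truncation error, I would obtain a cover of the class at scale $\lesssim \epsilon q+\sum_{k\le K}\eta_k$ of log-cardinality $\lesssim_V \sum_{k\le K}\frac{b_k}{\eta_k}\log(b_k/\eta_k)$, and then choose $\{\eta_k\}$ with $\sum_{k\le K}\eta_k\lesssim \epsilon q$ so that the right-hand side is $\lesssim \frac{C}{\epsilon}\log(C/\epsilon)\log(1/(\epsilon q))$; restoring the $f^{+}/f^{-}$ split only changes constants.

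I expect the last step --- the allocation of the per-slice error budgets $\eta_k$ --- to be the main obstacle: an equal split $\eta_k\asymp \epsilon q/K$ overshoots the target by a logarithmic factor, and one must instead combine the $\ell^2$-summability $\sum_k b_k^2\lesssim q^2$ with the bound $K\lesssim \log(1/(\epsilon q))$ on the number of relevant slices, and quite possibly also exploit the fact that the superlevel sets $\{f\ge t\}$ form a \emph{single} nested family across all slices, which caps the number of chains realizable through the nets and trims the count further. Secondary points that must be handled but should be routine are: the polynomial entropy bound for VC classes of sets and its stability under restriction to, and rescaling onto, a set $D$; preservation of the VC-major property under negation; the clean-up of logarithmic factors in Haussler's bound; and the standing countability assumption on $\mathcal F_0$, which makes all the covering numbers above well defined.
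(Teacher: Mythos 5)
Your overall strategy --- slice by height, cover each slice using the VC structure of the level sets, reassemble --- is close in spirit to the paper's, but the dyadic slicing plus per-slice products leaves a gap that you correctly flag at the end. With $\sum_k b_k^2 \lesssim q^2$ and $K\lesssim \log(1/(\epsilon q))$ in hand, no allocation of the $\eta_k$ I can see simultaneously forces $\sum_{k\le K}\eta_k \lesssim \epsilon q$ and $\sum_{k\le K}(b_k/\eta_k)\log(b_k/\eta_k)\lesssim \epsilon^{-1}\log(1/\epsilon)\log(1/(\epsilon q))$; the natural choices (equal split, or $\eta_k\propto b_k$) each overshoot by at least a $\sqrt K$, and the nesting idea that might close the gap is left as a ``quite possibly.'' As written this is a genuine gap.

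The paper's proof is organized so that the budget-allocation problem never arises. It uses a multiplicative grid $t_j=(1+\epsilon)^{-j}$ rather than dyadic heights and quantizes each $f$ to a single function $f_\epsilon$ taking values among $\pm t_j$, the multiplicativity giving $|f-f_\epsilon|\le \epsilon F$ wherever $|f|>t_{m(\epsilon)}$ and a small additive remainder below, so that $\|f-f_\epsilon\|_{L_2(Q)}\le\sqrt{3}\,\epsilon\,\|F\|_{L_2(Q)}$. The key structural observation is then that the subgraph $\{(x,t): f_\epsilon(x)\ge t\}$ of any such quantized $f_\epsilon$ is a union of $2m(\epsilon)+1$ sets drawn from VC classes over disjoint horizontal strips $(t_j,t_{j-1}]$, $(-t_{j-1},-t_j]$, $(-t_{m(\epsilon)},t_{m(\epsilon)}]$; since these have disjoint supports, the whole class $\mathcal F_\epsilon=\{f_\epsilon: f\in\mathcal F\}$ is a VC-subgraph class of index $\lesssim V\,m(\epsilon)$, with $m(\epsilon)\asymp\epsilon^{-1}\log(1/(\epsilon\|F\|_{L_2(Q)}))$. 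A single application of Haussler's bound to $\mathcal F_\epsilon$ at scale $\epsilon\|F\|_{L_2(Q)}$ then gives the claimed estimate. This is precisely the ``single nested family, cap the VC index'' device you conjecture might be needed, carried out concretely: the $(1+\epsilon)$-spacing makes a single global entropy bound suffice, in place of the per-slice covers and products that the dyadic decomposition forces and cannot reassemble tightly enough.
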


The proof of this lemma essentially follows from page 1171-1172 of \cite{gine2006concentration} 
with a minor modification. We include some details in Section \ref{section:proof_remaining} 
for the convenience of the reader.

We also need the following lemma concerning the envelope of a convex function given 
constraints on its $L_2$ size. The proof can be found in Lemma 7.3 of \cite{guntuboyina2013global}.
\begin{lemma}\label{lem:convex_1dim_envelope}
	If $f$ is a convex function on $[0,1]$ with $\int_0^1 \abs{f(x)}^2\ \d{x}\leq 1$, 
	then $ \abs{f(x)}\leq 2\sqrt{3}\big(x^{-1/2}\vee (1-x)^{-1/2}\big)$
	for all $x \in (0,1)$.
\end{lemma}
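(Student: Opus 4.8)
The plan is to reduce to the case $x_0\le 1/2$ --- the other case follows by applying the result to $x\mapsto f(1-x)$, which is again convex with the same $L_2(P)$ norm --- and in fact to establish the stronger bound $\abs{f(x_0)}\le 2x_0^{-1/2}$, which dominates $2\sqrt 3\,(x_0^{-1/2}\vee(1-x_0)^{-1/2})$. Everything uses only elementary properties of convex functions.

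First I would dispose of the case $f(x_0)\ge 0$. A convex function on $[0,1]$ is monotone on each side of the location $m$ of its minimum. If $m\ge x_0$, then $f$ is non-increasing on $(0,x_0)$, so $f\ge f(x_0)$ there and $1\ge\int_0^{x_0}f^2\ge x_0 f(x_0)^2$, giving $f(x_0)\le x_0^{-1/2}$. If $m<x_0$, then $f$ is non-decreasing on $(x_0,1)$, so $f\ge f(x_0)$ there and $1\ge(1-x_0)f(x_0)^2\ge\tfrac12 f(x_0)^2$, giving $f(x_0)\le\sqrt2\le x_0^{-1/2}$ since $x_0\le 1/2$.

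The substantive case is $f(x_0)=-v$ with $v>0$. Since $f$ is convex, $\{f<0\}$ is an open interval $(\alpha,\beta)$ with $0\le\alpha<x_0<\beta\le 1$ and $f(\alpha)\le 0$ (with $f(\alpha)=0$ when $\alpha>0$). I would bound $\int_0^{x_0}f^2$ below by splitting $[0,x_0]=[0,\alpha]\cup[\alpha,x_0]$. On $[\alpha,x_0]$ the function $-f$ is concave with $-f(\alpha)\ge 0$ and $-f(x_0)=v$, hence lies above the chord through $(\alpha,-f(\alpha))$ and $(x_0,v)$; discarding the non-negative endpoint term gives $-f(x)\ge v(x-\alpha)/(x_0-\alpha)$, so $\int_\alpha^{x_0}f^2\ge v^2(x_0-\alpha)/3$. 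If $\alpha>0$, monotonicity of the divided differences of the convex function $f$ forces the slope of $f$ on $[0,\alpha]$ to be at most the slope $-v/(x_0-\alpha)$ of the chord over $[\alpha,x_0]$; combined with $f(\alpha)=0$ this yields $f(x)\ge v(\alpha-x)/(x_0-\alpha)$ on $[0,\alpha]$, whence $\int_0^{\alpha}f^2\ge v^2\alpha^3/\big(3(x_0-\alpha)^2\big)$. Setting $t=x_0-\alpha\in(0,x_0]$, the two estimates combine to $\int_0^1 f^2\ge \tfrac{v^2}{3}\big(t+(x_0-t)^3/t^2\big)$; an elementary one-variable minimization shows the bracket is minimized at $t=2x_0/3$ with value $3x_0/4$, so $\int_0^1 f^2\ge v^2 x_0/4$ and hence $v\le 2x_0^{-1/2}$. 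When $\alpha=0$ the first estimate alone already gives $\int_0^{x_0}f^2\ge v^2x_0/3$, so $v\le\sqrt3\,x_0^{-1/2}$. Either way $\abs{f(x_0)}\le 2x_0^{-1/2}$, completing the proof.

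The step needing the most care is the two-sided lower bound together with its scalar optimization $t\mapsto t+(x_0-t)^3/t^2$; beyond that, the only subtlety is the endpoint behaviour of $f$, but since a convex function cannot tend to $-\infty$ at an endpoint, one always has $f(\alpha)\le 0$, and the integrals above are well defined even if $f$ blows up to $+\infty$ at an endpoint lying outside $(\alpha,\beta)$. No empirical-process or entropy machinery is required here.
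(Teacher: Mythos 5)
The paper offers no proof of this lemma; it simply cites Lemma~7.3 of \cite{guntuboyina2013global}, so there is no in-paper argument to compare against. Your self-contained proof is correct. After the symmetry reduction to $x_0\le 1/2$, the case $f(x_0)\ge 0$ is disposed of by the one-sided monotonicity of a convex function about its minimizer, and the substantive case $f(x_0)=-v<0$ is handled by your two chord lower bounds, giving $\int_0^1 f^2\ge\tfrac{v^2}{3}\bigl(t+(x_0-t)^3/t^2\bigr)$ with $t=x_0-\alpha\in(0,x_0]$. Writing $u=(x_0-t)/t$ one finds $\tfrac{d}{dt}\bigl(t+(x_0-t)^3/t^2\bigr)=1-3u^2-2u^3=-(u+1)^2(2u-1)$, which for $u\ge 0$ vanishes only at $u=1/2$, i.e.\ $t=2x_0/3$; since the expression is decreasing on $(0,2x_0/3)$ and increasing on $(2x_0/3,x_0]$, the minimum value is $3x_0/4$, whence $1\ge v^2x_0/4$ and $v\le 2x_0^{-1/2}$, which indeed implies the claimed $2\sqrt3\,(x_0^{-1/2}\vee(1-x_0)^{-1/2})$ for $x_0\le 1/2$. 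The endpoint observations ($f(\alpha)=0$ for interior $\alpha$, $\limsup_{x\to0^+}f(x)\le 0$ when $\alpha=0$, the divided-difference inequality on $[0,\alpha]$) are exactly what is needed and harmless for the integrals. One verbal slip: the bound $2x_0^{-1/2}$ is \emph{dominated by} $2\sqrt3\,x_0^{-1/2}$, not the other way around as your phrasing suggests, but the direction of implication you actually invoke is the right one. In fact your argument yields the sharper constant $2$ in place of $2\sqrt3$, so it is slightly stronger than the cited result.
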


\begin{proof}[Proof of Lemma \ref{lem:l_2_shape_constraint_mep_est}]
	In the proof we omit the dependence on $L_\infty(B)$ if there is no confusion. 
	All three empirical processes can be handled in essentially the same way so we focus 
	on the most difficult first one (with $\xi_i$'s only admitting a $L_{2,1}$ moment). 
	We will apply Lemma \ref{lem:interpolation_inequality} in the following form:
	\begin{align}\label{ineq:iso_reg_0}
	&\E \sup_{f \in \mathcal{F}: f-f_0^\ast \in L_2(\delta_n) }\biggabs{\frac{1}{\sqrt{n}}\sum_{i=1}^{n} \xi_i (f-f_0^\ast)(X_i)}\\
	&\qquad \leq 3\pnorm{\xi_1}{2,1} \max_{1\leq k\leq n} 
	     \E \sup_{f \in \mathcal{F}: f-f_0^\ast \in L_2(\delta_n) }\biggabs{\frac{1}{\sqrt{k}}\sum_{i=1}^{k} \epsilon_i (f-f_0^\ast)(X_i)}.\nonumber
	\end{align}
	To see this, note that the right hand side of (\ref{ineq:interpolation_cont_prin}) can be bounded by
	\begin{align*}
	\E \bigg[ \sum_{k=1}^n  \sqrt{k} (\abs{\eta_{(k)}}-\abs{\eta_{(k+1)}}) \bigg]\cdot 
	      \max_{1\leq k\leq n} \E\bigg\lVert\frac{1}{\sqrt{k}}\sum_{i=1}^k \epsilon_i f(X_i)\bigg\lVert_{\mathcal{F}}
	\end{align*}
	where $
	\E \big[ \sum_{k=1}^n  \sqrt{k} (\abs{\eta_{(k)}}-\abs{\eta_{(k+1)}} ) \big] 
	\leq \sqrt{n} \pnorm{\eta_1}{2,1}\leq 3 \sqrt{n}\pnorm{\xi_1}{2,1}$. 
	The first inequality follows from similar lines as in the proof of Theorem 1 of 
	\cite{han2017sharp} and the second inequality uses Problem 2 on page 186 of \cite{van1996weak}. 
	This proves (\ref{ineq:iso_reg_0}).  Note that any $f_m \in \mathcal{G}_m$ has a representation 
	$f_m=\sum_{j=1}^m g_j \bm{1}_{I_j}$, where $\{I_j=[x_j,x_{j+1}]\}_{j=1}^m$ is a partition of $\mathcal{X}=[0,1]$ 
	with $x_1=0,x_{m+1}=1$ and $g_j \in \mathcal{G}$.  Then for any $f_m \in \mathcal{G}_m$, 
	the empirical process localized at $f_0^\ast$ can be controlled via
	\begin{align}\label{ineq:iso_reg_1}
	&\E \sup_{f\in \mathcal{F}: f-f_0^\ast \in L_2(\delta_n) \cap L_\infty(B)}\biggabs{\frac{1}{\sqrt{k}}\sum_{i=1}^{k} \epsilon_i (f-f_0^\ast)(X_i)}\\
	&\leq \E \sup_{ \substack{f\in \mathcal{F}: \pnorm{f-f_m}{L_2(P)}\leq \delta_n+\pnorm{f_m-f_0^\ast}{L_2(P)},\\ 
	       \pnorm{f}{\infty}\leq B+\pnorm{f_0^\ast}{\infty}} } \biggabs{\frac{1}{\sqrt{k}}
	           \sum_{i=1}^{k} \epsilon_i (f-f_m)(X_i)}+\pnorm{f_0^\ast-f_m}{L_2(P)},\nonumber
	\end{align}
	where the second term holds because the collection $\{f_0^\ast-f_m\}$ consists of just one element. 
	The first term in the above term can be further bounded by
	\begin{align}\label{ineq:iso_reg_2}
	&\sum_{j=1}^m \E \bigg[\frac{\sqrt{k_j}}{\sqrt{k}}\E \bigg[ \sup_{ \substack{f \in \mathcal{F}: \pnorm{f-f_m}{L_2(P)}
	     \leq \delta_n+\pnorm{f_m-f_0^\ast}{L_2(P)},\\ \pnorm{f}{\infty}\leq B+\pnorm{f_0^\ast}{\infty}} } 
	     \biggabs{\frac{1}{\sqrt{k_j}}\sum_{X_i \in I_j} \epsilon_i (f-g_j)(X_i) }\\
	&\qquad\qquad\qquad\qquad\qquad\qquad\qquad\qquad\qquad\qquad\qquad\qquad\bigg\lvert k_j(\bm{X})=k_j\bigg]\bigg]\nonumber\\
	&\leq \sum_{j=1}^m \E \bigg[\frac{\sqrt{k_j}}{\sqrt{k}}\E \bigg[\sup_{\substack{f|_{I_j} \in \mathcal{F}|_{I_j}:\\
			\pnorm{f}{\infty}\leq B+2\pnorm{f_0^\ast}{\infty}, \\  Pf^2
          \leq (\delta_n+\pnorm{f_m-f_0^\ast}{L_2(P)})^2 }}\biggabs{\frac{1}{\sqrt{k_j}}\sum_{X_i \in I_j} \epsilon_i f|_{I_j} (X_i) }
                  \bigg\lvert k_j(\bm{X})=k_j\bigg]\bigg]\nonumber
	\end{align}
	where $k_j(\bm{X})=\sum_{i=1}^k \bm{1}_{I_j}(X_i)$, and in the second line we used the definition of a basic adaptive subset (cf. Definition \ref{def:basic_adaptive_set}). From now on we write $\tilde{\delta}_n\equiv \delta_n + \pnorm{f_m-f_0^\ast}{L_2(P)}$ and 
	$B_0\equiv B+2\pnorm{f_0^\ast}{\infty}$ for notational convenience.  
	Since $\big(\mathcal{F}\cap L_\infty(B_0)\big)|_{I_j}$ is VC-major, so is its subset 
	$\mathcal{F}_{I_j}(\tilde{\delta}_n)\equiv \{ f|_{I_j} \in \big(\mathcal{F}\cap L_\infty(B_0)\big)|_{I_j}: Pf^2\leq \tilde{\delta}_n^2\}$. 
	It follows by Lemma \ref{lem:subset_VC} that there exists some $C\equiv C_{\mathcal{F},B_0}>0$ such that for any 
	probability measure $Q$ on $I_j$, and any $\epsilon \in (0,1)$,
	\begin{align*}
	 &\log \mathcal{N}\left(\epsilon \pnorm{F_{I_j}(\tilde{\delta}_n)}{L_2(Q)}, \mathcal{F}_{I_j}(\tilde{\delta}_n),L_2(Q)\right) 
	 \leq \frac{C}{\epsilon}  \log\bigg(\frac{C}{\epsilon}\bigg) \log\bigg(\frac{1}{\epsilon \pnorm{F_{I_j}(\tilde{\delta}_n)}{L_2(Q)}}\bigg),
	\end{align*}
    where $F_{I_j}(\delta)$ is any envelope for $\mathcal{F}_{I_j}(\delta)$. This enables us to apply the Koltchinskii-Pollard 
    maximal inequality to see that the summand (=conditional expectation) in the second line of (\ref{ineq:iso_reg_2}) 
    can be bounded by (further conditioning on which $X_i$'s lie in the interval $I_j$, each case corresponds to i.i.d. uniforms on $I_j$) 
	\begin{align}\label{ineq:iso_reg_4}
	&\int_0^1 \sqrt{  \frac{C}{\epsilon}  \log\bigg(\frac{C}{\epsilon}\bigg)  
	    \log\bigg(\frac{1}{\epsilon \inf_Q \pnorm{F_{I_j}(\tilde{\delta}_n) }{L_2(Q)}}\bigg)  }\ \d{\epsilon}\cdot  \sqrt{P_{I_j} F_{I_j}^2(\tilde{\delta}_n)},
	\end{align}
	where $P_{I_j}$ is the uniform distribution on $I_j$.
	
	In order to evaluate (\ref{ineq:iso_reg_4}), note that by the definition of convexity-based shape 
	constraint and Lemma \ref{lem:convex_1dim_envelope}, the envelopes $F_{I_j}(\delta)$'s can be taken 
	as the restrictions of the global envelope
	\begin{align*}
	F(\delta)(x)\equiv \left(\frac{\delta}{\sqrt{x}}\vee \frac{\delta}{\sqrt{1-x}}\right) \wedge B_0
	\end{align*}
	to the $I_j$'s.	Without loss of generality we assume: (i) $B_0=1$, (ii) $\tilde{\delta}_n^2 < 1/2$ 
	and (iii) $\tilde{\delta}_n^2$ and $1-\tilde{\delta}_n^2$ are one of the endpoints of some intervals in $\{I_j\}$ 
	(otherwise, we may take an alternative representation of $f_m \in \mathcal{G}_{m+2}$ by adding these two points). 
	
	Note that $\inf_Q \pnorm{F_{I_j}(\tilde{\delta}_n)}{L_2(Q)}\geq \sqrt{2}\tilde{\delta}_n> 1/n$ 
	by the assumption $\delta_n\geq 1/n$, and hence the integral term in (\ref{ineq:iso_reg_4}) can be bounded by
	\begin{align*}
	\int_0^1 \sqrt{ \frac{C}{\epsilon}\log\bigg(\frac{C}{\epsilon}\bigg) \log\bigg(\frac{n}{\epsilon}\bigg)}\ \d{\epsilon}  \lesssim \sqrt{\log n}\equiv \bar{L}_n.
    \end{align*}
    To handle the $\sqrt{P_{I_j} F_{I_j}^2(\tilde{\delta}_n)}$ term in (\ref{ineq:iso_reg_4}), define the index sets $
	\mathcal{J}_1\equiv \{ 1\leq j\leq m: I_j \subset [0,\tilde{\delta}_n^2]\},
	\mathcal{J}_2\equiv \{1\leq j\leq m: I_j \subset [\tilde{\delta}_n^2,1-\tilde{\delta}_n^2]\}$ and $
	\mathcal{J}_3\equiv \{1\leq j\leq m: I_j \subset [1-\tilde{\delta}_n^2,1]\}$. 
	It is easy to see that $\mathcal{J}_1\cup \mathcal{J}_2\cup \mathcal{J}_3=\{1,\ldots,m\}$. Clearly for $j \in \mathcal{J}_1\cup \mathcal{J}_3$,
	\begin{align*}
	P_{I_j} F_{I_j}^2 (\tilde{\delta}_n)  = \abs{I_j}^{-1} \int_{I_j} F_{I_j}^2(\tilde{\delta}_n)(x)\ \d{x} \leq 1,
	\end{align*}
	and for $j \in \mathcal{J}_2$, 
	\begin{align*}
	P_{I_j} F_{I_j}^2 (\tilde{\delta}_n)  &\leq \abs{I_j}^{-1}\tilde{\delta}_n^2 \int_{x_j}^{x_{j+1}} \left(\frac{1}{x}\vee \frac{1}{1-x}\right)\ \d{x}\\
	&\leq \abs{I_j}^{-1}\tilde{\delta}_n^2 \left[ \log \left(\frac{x_{j+1}}{x_j}\right)\vee \log \left(\frac{1-x_{j}}{1-x_{j+1}}\right) \right].
	\end{align*}
	Summarizing the above discussion shows that we can further bound (\ref{ineq:iso_reg_2}) by a $\mathcal{O}(\bar{L}_n)$ multiple of
	\begin{align}\label{ineq:iso_reg_3}
	&\sum_{j \in \mathcal{J}_1\cup \mathcal{J}_3} \E \left[ \sqrt{\frac{k_j}{k}}\cdot 1\right] + \sum_{j \in \mathcal{J}_2} \tilde{\delta}_n \cdot \E \left[ \sqrt{\frac{k_j}{k}}\cdot  \sqrt{\frac{\log(x_{j+1})-\log(x_j)}{x_{j+1}-x_j}}\right]\\
	&\qquad\qquad\qquad\qquad\qquad + \sum_{j \in \mathcal{J}_2} \tilde{\delta}_n \cdot \E \left[ \sqrt{\frac{k_j}{k}}\cdot  \sqrt{\frac{\log(1-x_{j})-\log(1-x_{j+1})}{(1-x_j)-(1-x_{j+1})}}\right]\nonumber\\
	&\equiv (I)+(II)+(III).\nonumber
	\end{align}
	The first term of (\ref{ineq:iso_reg_3}) is easy to handle: by the Cauchy-Schwarz inequality,
	\begin{align*}
	(I) \leq \sqrt{ k^{-1} \bigg(\E \sum_{j \in \mathcal{J}_1\cup \mathcal{J}_3} k_j(\bm{X})\bigg)\cdot \abs{\mathcal{J}_1\cup \mathcal{J}_3} } \leq \sqrt{\sum_{j \in \mathcal{J}_1\cup \mathcal{J}_3} \abs{I_j}  }\cdot \sqrt{m}\lesssim \tilde{\delta}_n \sqrt{m}.
	\end{align*}
	The second and third terms of (\ref{ineq:iso_reg_3}) can be handled in a similar fashion; we only consider the second term of (\ref{ineq:iso_reg_3}). Again by the Cauchy-Schwarz inequality, 
	\begin{align*}
	(II)&\leq \tilde{\delta}_n \sqrt{m} \cdot \sqrt{ \E \bigg[\sum_{j \in \mathcal{J}_2} \frac{k_j(\bm{X})}{k} \cdot \frac{\log(x_{j+1})-\log(x_j)}{x_{j+1}-x_j}\bigg]}\\
	& = \tilde{\delta}_n \sqrt{m}  \sqrt{\sum_{j \in \mathcal{J}_2} \big(\log(x_{j+1})-\log(x_j)\big) }\lesssim \sqrt{m}\cdot \tilde{\delta}_n \sqrt{\log (1/\tilde{\delta}_n)}.
	\end{align*}
	Collecting the above estimates, we see that (\ref{ineq:iso_reg_2}) can be bounded by a constant multiple of $\sqrt{m}\cdot \tilde{\delta}_n \sqrt{\log (1/\tilde{\delta}_n)}\bar{L}_n$. Thus, (\ref{ineq:iso_reg_1}) yields that 
	\begin{align*}
	\max_{1\leq k\leq n}\E \sup_{f \in \mathcal{F}: f-f_0^\ast \in L_2(\delta_n) }\biggabs{\frac{1}{\sqrt{k}}\sum_{i=1}^{k} \epsilon_i (f-f_0^\ast)(X_i)}\leq C'\sqrt{m}\cdot \tilde{\delta}_n \sqrt{\log (1/\tilde{\delta}_n)}\bar{L}_n.
	\end{align*}
	Combined with (\ref{ineq:iso_reg_0}), the claim of the lemma follows.
\end{proof}

\begin{proof}[Proof of Theorem \ref{thm:l_2_shape_constraint_oracle_ineq}]
	The proof follows easily from the reduction scheme Proposition \ref{prop:additive_lse} and Lemma \ref{lem:l_2_shape_constraint_mep_est}  by solving a quadratic inequality. We provide some details below. Abusing notation, we let $f_m\in \argmin_{g_m \in \mathcal{G}_m} \pnorm{g_m-f_0^\ast}{L_2(P)}$ and $m$ be the index attaining the infimum of the oracle inequality in the statement of the theorem. We only need to choose $\delta_n$ such that 
	\begin{align*}
	\sqrt{m}(\delta_n+\pnorm{f_m-f_0^\ast}{L_2(P)})\sqrt{\log (1/\delta_n)} \bar{L}_n\leq c_{\delta,\mathcal{F},\pnorm{f_0^\ast}{\infty},\pnorm{\xi}{2,1}} \sqrt{n}\delta_n^2.
	\end{align*}
	Suppose $\log(1/\delta_n)\lesssim \log n$. Then we can easily solve for the zeros for quadratic forms to see that the inequality in the last display holds if
	\begin{align*}
	\delta_n^2 \gtrsim \frac{m\bar{L}_n^2 \log n} {n}+\sqrt{\frac{m\bar{L}_n^2\log n}{n}}\pnorm{f_m-f_0^\ast}{L_2(P)}.
	\end{align*}
	The assumption $\log(1/\delta_n)\lesssim \log n$ apparently holds. The right hand side of the above display can be further bounded up to a constant by $
	\frac{m\bar{L}_n^2 \log n}{n}+\pnorm{f_m-f_0^\ast}{L_2(P)}^2$ 
	by the basic inequality $ab\leq (a^2+b^2)/2$, thereby completing the proof of Theorem \ref{thm:l_2_shape_constraint_oracle_ineq}.
\end{proof}

\subsection{Proof of Theorem \ref{thm:additive_isotonic_reg}}

The proof of Theorem \ref{thm:additive_isotonic_reg} follows a similar strategy as that of Theorem \ref{thm:l_2_shape_constraint_oracle_ineq}. First we need the following reduction scheme.

\begin{proposition}\label{prop:additive_lse}
	Consider the additive model (\ref{additive_model}) and the least squares estimator $\hat{f}_n$ in (\ref{lse_additive_model}).  Suppose that $\xi_1,\ldots,\xi_n$ are mean-zero random variables independent of $(X_1,Z_1),\ldots,(X_n,Z_n)$, and $\mathcal{F}$ is convex with $\mathcal{F}-f_0^\ast\subset L_\infty(1)$.  Further assume that all three parts of (\ref{cond:lse_generic})
	and
	\begin{align}\label{cond:ep_add_2}
	\E \sup_{ \substack{f \in \mathcal{F}:\pnorm{f-f_0^\ast}{L_2(P)}\leq \delta \\ h\in \mathcal{H}}} \biggabs{\frac{1}{\sqrt{n}}\sum_{i=1}^n \epsilon_i(f-f_0^\ast)(X_i)(h-(\phi_0-f_0))(X_i, Z_i) }&\lesssim \phi_n(\delta),
	\end{align}
	hold for some $\phi_n$ such that $\delta\mapsto \phi_n(\delta)/\delta$ is non-increasing. 
	Then \newline
	 $\pnorm{\hat{f}_n-f_0^\ast}{L_2(P)}=\mathcal{O}_{\mathbf{P}}(\delta_n)$ 
	holds for any $\delta_n$ such that $\phi_n(\delta_n)\leq \sqrt{n}\delta_n^2$. 
\end{proposition}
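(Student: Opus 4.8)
The plan is to run the standard ``basic inequality, recentering, peeling'' scheme for least squares estimators; the one genuinely new ingredient is the handling of the nuisance estimate $\hat{h}_n$, and Proposition \ref{prop:lse} will then be recovered as the special case $\mathcal{H}=\{0\}$, $\phi_0=f_0$ (for which (\ref{cond:ep_add_2}) is vacuous). First we would exploit the fact that, since $f_0^\ast\in\mathcal{F}\cap L_2(P)$, the pair $(f_0^\ast,\hat{h}_n)$ is feasible in (\ref{lse_additive_model}); comparing it with the minimizer $(\hat{f}_n,\hat{h}_n)$, substituting $Y_i=\phi_0(X_i,Z_i)+\xi_i$, and using the rearrangement $\phi_0-f_0-\hat{h}_n=-(\hat{h}_n-(\phi_0-f_0))$, an expansion of the squares gives, with $\Delta\equiv\hat{f}_n-f_0^\ast\in\mathcal{F}-f_0^\ast\subset L_\infty(1)$, the basic inequality
\[
\Prob_n\Delta^2\le 2\Prob_n[W\Delta],\qquad W\equiv\xi-(\hat{h}_n-(\phi_0-f_0))(X,Z)+(f_0-f_0^\ast)(X).
\]
The structural point, already visible here, is that keeping $\hat{h}_n$ on \emph{both} sides makes the term quadratic in $h$ cancel exactly, so no a priori control on $\hat{h}_n$ is needed.

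Next we would recenter each piece of $\Prob_n[W\Delta]$ by writing $\Prob_n=(\Prob_n-P)+P$ and momentarily treating $\hat{f}_n,\hat{h}_n$ as fixed elements of $\mathcal{F}\times\mathcal{H}$. The $\xi$-term has vanishing $P$-part (mean-zero errors independent of $(X,Z)$), so it equals the multiplier process bounded by the first line of (\ref{cond:lse_generic}). The crucial observation is that the $P$-part of $P[(\hat{h}_n-(\phi_0-f_0))\Delta]$ vanishes \emph{as a functional on the whole product class}: for any $f\in\mathcal{F}$, $h\in\mathcal{H}$, $\int h(z)(f-f_0^\ast)(x)\,\d P=(P_Zh)(P_X(f-f_0^\ast))=0$ since $\mathcal{H}$ is centered and $X\perp Z$ under $P$, while $\int(\phi_0(x,z)-f_0(x))(f-f_0^\ast)(x)\,\d P=\int(f-f_0^\ast)(x)\,\E[\phi_0(X,Z)-f_0(X)\mid X=x]\,\d P_X=0$ by the very definition $f_0=P_Z\phi_0(\cdot,Z)$. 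Hence this term collapses to a centered process which, after symmetrization, is controlled by (\ref{cond:ep_add_2}); because that bound is uniform over \emph{all} of $\mathcal{H}$, substituting the random $\hat{h}_n$ is harmless. Finally $P[(f_0-f_0^\ast)\Delta]\le 0$ by the variational inequality characterizing $f_0^\ast$ as the $L_2(P)$-projection of $f_0$ onto the closed convex set $\mathcal{F}\cap L_2(P)$, so that term is dominated by its centered part, controlled after symmetrization by the third line of (\ref{cond:lse_generic}). Transferring the left-hand side from $\Prob_n\Delta^2$ to $P\Delta^2=\pnorm{\hat{f}_n-f_0^\ast}{L_2(P)}^2$ costs $\abs{(\Prob_n-P)\Delta^2}$, which, since $\Delta\in L_\infty(1)$, is controlled by symmetrization together with the Ledoux--Talagrand contraction applied against the second line of (\ref{cond:lse_generic}). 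At this stage $\pnorm{\hat{f}_n-f_0^\ast}{L_2(P)}^2$ is bounded by a sum of four nonnegative localized empirical processes, each evaluated at $\hat{f}_n$.

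We would finish by peeling. Writing $\mathcal{F}_0(r)\equiv\{f\in\mathcal{F}:\pnorm{f-f_0^\ast}{L_2(P)}\le r\}$ and slicing over the shells $A_j\equiv\{2^{j-1}\delta_n<\pnorm{\hat{f}_n-f_0^\ast}{L_2(P)}\le 2^j\delta_n\}$, on $A_j$ one has $\hat{f}_n\in\mathcal{F}_0(2^j\delta_n)$ and $P\Delta^2>4^{j-1}\delta_n^2$, so Markov's inequality together with the bounds from the previous step gives $\Prob(A_j)\lesssim\phi_n(2^j\delta_n)/(\sqrt{n}\,4^{j-1}\delta_n^2)$. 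The hypothesis that $\delta\mapsto\phi_n(\delta)/\delta$ is non-increasing yields $\phi_n(2^j\delta_n)\le 2^j\phi_n(\delta_n)$, whence $\sum_{j:\,2^j\ge K}\Prob(A_j)\lesssim\phi_n(\delta_n)/(K\sqrt{n}\,\delta_n^2)\le K^{-1}$ as soon as $\phi_n(\delta_n)\le\sqrt{n}\,\delta_n^2$; letting $K\to\infty$ uniformly in $n$ gives $\pnorm{\hat{f}_n-f_0^\ast}{L_2(P)}=\mathcal{O}_{\mathbf{P}}(\delta_n)$, which is the assertion.

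The main obstacle is the exact cancellation $P[(\hat{h}_n-(\phi_0-f_0))\Delta]=0$ in the recentering step: this identity, which decouples the possibly slowly-converging and \emph{un-localized} nuisance $\hat{h}_n$ from the target $\hat{f}_n$, must be established as a pointwise fact on $\mathcal{F}\times\mathcal{H}$ so that it persists under substitution of the random estimators, and the residual fluctuation must then be absorbed \emph{entirely} by the global (un-localized in $h$) process (\ref{cond:ep_add_2}) --- which is precisely why no a priori information on $\hat{h}_n$ beyond the hypothesis (\ref{cond:ep_add_2}) is required. The remaining points are routine but need attention: ensuring $f_0^\ast\in\mathcal{F}\cap L_2(P)$ (closedness, used both to deploy $f_0^\ast$ as a competitor and to invoke the projection inequality), the contraction estimate for $\Delta^2$ that transfers the empirical to the population $L_2$ loss, and bookkeeping so that all constants depend only on the stated parameters.
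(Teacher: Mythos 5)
Your proposal is correct and follows essentially the same route as the paper's proof: comparing $(\hat f_n,\hat h_n)$ with the feasible competitor $(f_0^\ast,\hat h_n)$ so that the quadratic term in $\hat h_n$ cancels; observing the pointwise identity $P\big[(f-f_0^\ast)\cdot(h-(\phi_0-f_0))\big]=0$ on $\mathcal{F}\times\mathcal{H}$ (via $P_X\perp P_Z$, centeredness of $\mathcal H$, and $f_0=P_Z\phi_0(\cdot,Z)$) and the variational inequality $P[(f_0-f_0^\ast)\Delta]\le 0$ from the convex projection; and then absorbing the four centered empirical processes with symmetrization, contraction for the $\Delta^2$ term, and a peeling/Markov argument using the non-increasing property of $\phi_n(\delta)/\delta$. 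The only difference is presentational (you isolate the clean basic inequality $\Prob_n\Delta^2\le 2\Prob_n[W\Delta]$ before decomposing, whereas the paper threads through a chain of equivalences), not mathematical.
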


\begin{proof}
 Recall that $f_0 = P_Z \phi_0(\cdot, Z)$. By the definition of the LSE,
	\begin{align*}
	&\Prob_n(\phi_0+\xi-\hat{f}_n-\hat{h}_n)^2\leq \Prob_n(\phi_0+\xi-f_0^\ast-\hat{h}_n)^2\\
	&\Leftrightarrow \quad \Prob_n(f_0^\ast-\hat{f}_n)\big(2\phi_0+2\xi-\hat{f}_n-f_0^\ast-2\hat{h}_n\big)\leq 0\\
	&\Leftrightarrow \quad \Prob_n(f_0^\ast-\hat{f}_n)^2+2\Prob_n(f_0^\ast-\hat{f}_n)\big(\phi_0+\xi-f_0^\ast-\hat{h}_n\big)\leq 0\\
	&\Leftrightarrow \quad -\Prob_n(f_0^\ast-\hat{f}_n)^2-2\Prob_n(f_0^\ast-\hat{f}_n)\xi -2\Prob_n(f_0^\ast-\hat{f}_n)(f_0-f_0^\ast)\\
	&\qquad\qquad\qquad - 2\Prob_n(f_0^\ast-\hat{f}_n)(\phi_0-f_0-\hat{h}_n)\geq 0\\
	&\Leftrightarrow \quad -(\Prob_n-P)\bigg[(f_0^\ast-\hat{f}_n)^2-2\xi(f_0^\ast-\hat{f}_n)\bigg]- P(f_0^\ast-\hat{f}_n)^2\\
	&\qquad\qquad\qquad -2(\Prob_n-P)(f_0^\ast-\hat{f}_n)(f_0-f_0^\ast)-2P(f_0^\ast-\hat{f}_n)(f_0-f_0^\ast)\\
	&\qquad\qquad\qquad\qquad-2(\Prob_n-P)(f_0^\ast-\hat{f}_n)(\phi_0-f_0-\hat{h}_n)\geq 0.
	\end{align*}
The last equivalence holds since
\begin{align*}
&P(f_0^\ast-\hat{f}_n)(X)(\phi_0-f_0-\hat{h}_n)(X,Z) \\
&= P \bigg[(f_0^\ast-\hat{f}_n)(X) P \big[(\phi_0-f_0-\hat{h}_n)(X,Z)\big\lvert X\big]\bigg]\\
& = P \bigg[(f_0^\ast-\hat{f}_n)(X) \big(P \big[\phi_0(X,Z)\big\lvert X\big]- f_0(X)-P \hat{h}_n(Z)\big)\bigg] = 0,
\end{align*}
where we used (i) $P [\phi_0(X,Z)|X]=f_0(X)$, and (ii) $P h = 0$ for all $h \in \mathcal{H}$. Now since $f_0^\ast \in \argmin_{g \in \mathcal{F}\cap L_2(P)} \pnorm{f_0-g}{L_2(P)}$, it follows from the convexity of $\mathcal{F}$ that $P(f_0^\ast-\hat{f}_n)(f_0-f_0^\ast)\geq 0$ [more specifically, for each $\epsilon>0$, since $(1-\epsilon)f_0^\ast+\epsilon \hat{f}_n^\ast \in \mathcal{F}\cap L_2(P)$ by convexity of $\mathcal{F}$, the definition of $f_0^\ast$ yields that $P(f_0-f_0^\ast)^2\leq P(f_0-(1-\epsilon)f_0^\ast-\epsilon \hat{f}_n)^2=P(f_0-f_0^\ast+\epsilon(f_0^\ast-\hat{f}_n))^2$. The claim follows by expanding the square and taking $\epsilon \to 0$].  This implies that, with $S_j(\delta_n)\equiv \{ f \in \mathcal{F}: 2^{j-1}\delta_n < \pnorm{f-f_0^\ast}{L_2(P)}\leq 2^j \delta_n\}$, on the event $\{2^{j-1}\delta_n<\pnorm{\hat{f}_n-f_0^\ast}{L_2(P)}\leq 2^j \delta_n\}$, it holds that
\begin{align*}
& \sup_{f \in S_j(\delta_n)} \abs{(\Prob_n-P)(f-f_0^\ast)^2}+2\sup_{f \in S_j(\delta_n)}\abs{(\Prob_n-P)\xi(f-f_0^\ast)}\\
& \qquad+2\sup_{f \in S_j(\delta_n)} \abs{(\Prob_n-P)(f-f_0^\ast)(f_0-f_0^\ast)}\\
&\qquad\qquad +2\sup_{f \in S_j(\delta_n), h \in \mathcal{H}}\abs{(\Prob_n-P)(f-f_0^\ast)(h-(\phi_0-f_0))}\\
&\geq -(\Prob_n-P)\bigg[(f_0^\ast-\hat{f}_n)^2-2\xi(f_0^\ast-\hat{f}_n)\bigg]\\
&\qquad -2(\Prob_n-P)(f_0^\ast-\hat{f}_n)(f_0-f_0^\ast)-2(\Prob_n-P)(f_0^\ast-\hat{f}_n)(\phi_0-f_0-\hat{h}_n)\\
&\geq 2^{2j-2} \delta_n^2.
\end{align*}
Hence by symmetrization, the contraction principle for Rademacher processes and the assumptions we see that 
\begin{align*}
&\Prob \big(\pnorm{\hat{f}_n-f_0^\ast}{L_2(P)}>2^{M-1} \delta_n\big)\\
&\leq \sum_{j\geq M} \Prob\bigg(\sup_{f \in S_j(\delta_n)} \abs{(\Prob_n-P)(f-f_0^\ast)^2}+2\sup_{f \in S_j(\delta_n)}\abs{(\Prob_n-P)\xi(f-f_0^\ast)}\\
& \qquad\qquad+2\sup_{f \in S_j(\delta_n)} \abs{(\Prob_n-P)(f-f_0^\ast)(f_0-f_0^\ast)}\\
&\qquad\qquad\qquad +2\sup_{f \in S_j(\delta_n), h \in \mathcal{H}}\abs{(\Prob_n-P)(f-f_0^\ast)(h-(\phi_0-f_0))}\geq 2^{2j-2}\delta_n^2\bigg)\\
&\lesssim \sum_{j\geq M} \big(2^{2j}\sqrt{n}\delta_n^2 \big)^{-1}\bigg( \E \pnorm{\G_n}{\mathcal{F}_0(2^j\delta_n)} \vee  \E \pnorm{\G_n}{ \mathcal{F}_0(2^j\delta_n)\otimes \xi }  \\
&\qquad\qquad \qquad\qquad \qquad\vee \E \pnorm{\G_n}{ \mathcal{F}_0(2^j\delta_n)\otimes (f_0-f_0^\ast) }   \vee \E \pnorm{\G_n}{ \mathcal{F}_0(2^j\delta_n)\otimes (\mathcal{H}-(\phi_0-f_0))} \bigg)\\
& \leq C\sum_{j\geq M} \frac{\phi_n(2^j \delta_n)}{2^{2j} \sqrt{n} \delta_n^2}\leq C\sum_{j\geq M} \frac{\phi_n( \delta_n)}{2^{j} \sqrt{n} \delta_n^2}\lesssim \sum_{j\geq M} 2^{-j}\to 0
\end{align*}
as $M \to \infty$. Here we denote $\mathcal{F}_0\equiv \mathcal{F}-f_0^\ast$, and in the last sequence of inequalities we used the assumption that $\delta \mapsto \phi_n(\delta)/\delta$ is non-decreasing and the definition of $\delta_n$. This completes the proof.
\end{proof}

By Proposition \ref{prop:additive_lse}, apart from the empirical processes in Lemma \ref{lem:l_2_shape_constraint_mep_est}, we also need to control the empirical process (\ref{cond:ep_add_2}) indexed by a suitably localized subset of $\mathcal{F}\otimes (\mathcal{H}-(\phi_0-f_0))\equiv \{f(x)\big(h(z)-\phi_0(x,z)-f_0(x)\big): f \in \mathcal{F}, h \in \mathcal{H}\}$. In a related work, \cite{van2014uniform} derived bounds for similar empirical processes under $L_\infty$-type entropy conditions for both $\mathcal{F}$ and $\mathcal{H}$ (cf. Theorem 3.1 of \cite{van2014uniform}), which apparently fail for shape constrained classes.

\begin{lemma}\label{lem:additive_mep_est}
	Suppose that the hypotheses of Theorem \ref{thm:additive_isotonic_reg} hold. Let  $\{\delta_n\}_{n \in \N}$ be a sequence of positive real numbers such that $\delta_n\geq 1/n$. Then for any $f_m \in \mathcal{G}_m\cap L_\infty(\pnorm{f_0^\ast}{\infty})$, and $B>0$,
	\begin{align*}
	&\E \sup_{ \substack{f \in \mathcal{F}: f-f_0^\ast \in L_2(\delta_n)\cap L_\infty(B)\\ h \in \mathcal{H}} }\biggabs{\frac{1}{\sqrt{n}}\sum_{i=1}^{n} \epsilon_i (f-f_0^\ast)(X_i)(h-h_0)(X_i, Z_i)}\\
	&\qquad \leq C_{ \mathcal{H},\mathcal{F},\pnorm{\phi_0}{\infty}, \pnorm{f_0^\ast}{\infty},B}\cdot  \sqrt{\log (1/\delta_n)} \bar{L}_n\cdot (\delta_n\vee \pnorm{f_m-f_0^\ast}{L_2(P)})\sqrt{m}.
	\end{align*}
	Here $\bar{L}_n\equiv \sqrt{\log n}$.
\end{lemma}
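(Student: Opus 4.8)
Write $h_0\equiv\phi_0-f_0$, the bounded function of $(x,z)$ appearing in Proposition \ref{prop:additive_lse}; since $\mathcal{H}\subset L_\infty(2\pnorm{\phi_0}{\infty})$ and $\pnorm{f_0}{\infty}\le\pnorm{\phi_0}{\infty}$, the functions $(x,z)\mapsto(h-h_0)(x,z)$ are uniformly bounded by $C_{\phi_0}\equiv 4\pnorm{\phi_0}{\infty}$. The plan is to run the change-of-center and piece-by-piece reduction from the proof of Lemma \ref{lem:l_2_shape_constraint_mep_est} essentially verbatim; no multiplier inequality is needed here, since the weights $\epsilon_i$ are already Rademacher, and the only genuinely new ingredient is a uniform-entropy estimate for the product class $\mathcal{F}_{I_j}(\tilde{\delta}_n)\otimes(\mathcal{H}-h_0)$ showing that the extra supremum over $h$ costs only a bounded constant. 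As there, fix $f_m=\sum_{j=1}^m g_j\bm{1}_{I_j}\in\mathcal{G}_m\cap L_\infty(\pnorm{f_0^\ast}{\infty})$ on an interval partition $\{I_j\}_{j=1}^m$ of $[0,1]$ with $g_j\in\mathcal{G}$, set $\tilde{\delta}_n\equiv\delta_n+\pnorm{f_m-f_0^\ast}{L_2(P)}$ and $B_0\equiv B+2\pnorm{f_0^\ast}{\infty}$, and split $(f-f_0^\ast)(X_i)=(f-f_m)(X_i)+(f_m-f_0^\ast)(X_i)$. Since the second summand does not depend on $f$,
\begin{align*}
&\E\sup_{f,h}\biggabs{\frac{1}{\sqrt{n}}\sum_{i=1}^n\epsilon_i(f-f_0^\ast)(X_i)(h-h_0)(X_i,Z_i)}\\
&\quad\le\E\sup_{f,h}\biggabs{\frac{1}{\sqrt{n}}\sum_{i=1}^n\epsilon_i(f-f_m)(X_i)(h-h_0)(X_i,Z_i)}\\
&\qquad\qquad+\E\sup_{h}\biggabs{\frac{1}{\sqrt{n}}\sum_{i=1}^n\epsilon_i(f_m-f_0^\ast)(X_i)(h-h_0)(X_i,Z_i)}\equiv(A)+(B).
\end{align*}

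I would handle $(B)$ first. The class $\{(x,z)\mapsto(f_m-f_0^\ast)(x)(h-h_0)(x,z):h\in\mathcal{H}\}$ has envelope $C_{\phi_0}\abs{f_m-f_0^\ast}(x)$, whose $L_2(P)$-size is $\le C_{\phi_0}\pnorm{f_m-f_0^\ast}{L_2(P)}$; and since $\pnorm{(f_m-f_0^\ast)(h-h')}{L_2(Q)}\le\pnorm{h-h'}{\infty}\pnorm{f_m-f_0^\ast}{L_2(Q)}$, the $L_\infty$ covering bound (\ref{ineq:cond_H2_1}) yields an envelope-normalized entropy $\lesssim_{\phi_0}\epsilon^{-\gamma}$, hence a uniform entropy integral $\lesssim\int_0^1\epsilon^{-\gamma/2}\,\d{\epsilon}<\infty$ (finite precisely because $\gamma<2$). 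Lemma \ref{lem:KP_maximal_ineq} then gives $(B)\lesssim_{\phi_0,\mathcal{H}}\pnorm{f_m-f_0^\ast}{L_2(P)}\le\tilde{\delta}_n$, which is within the asserted bound because $\sqrt{\log(1/\delta_n)}\,\bar{L}_n\sqrt{m}\ge1$.

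For $(A)$ I would proceed as in (\ref{ineq:iso_reg_1})--(\ref{ineq:iso_reg_4}): conditioning on the counts $k_j(\bm{X})=\sum_{i=1}^n\bm{1}_{I_j}(X_i)$, the points falling in $I_j$ are i.i.d. uniform on $I_j$ with their $Z$-coordinates i.i.d. uniform on $[0,1]$ and independent (as $X\perp Z$ under $P$), so $(A)\le\sum_{j=1}^m\E\big[\frac{\sqrt{k_j}}{\sqrt{n}}\,\E[\,\cdot\mid k_j]\big]$, the inner expectation being the Rademacher complexity over $\mathcal{F}_{I_j}(\tilde{\delta}_n)\otimes(\mathcal{H}-h_0)$, where $\mathcal{F}_{I_j}(\tilde{\delta}_n)$ is the VC-major localized class of Lemma \ref{lem:l_2_shape_constraint_mep_est} with envelope $F_{I_j}(\tilde{\delta}_n)$ obtained from the convexity-based shape constraint and Lemma \ref{lem:convex_1dim_envelope}. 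The product class has envelope $C_{\phi_0}F_{I_j}(\tilde{\delta}_n)(x)$, and the key point is that for any discrete probability measure $Q$ on $I_j\times[0,1]$, with $x$-marginal $Q_X$, and $\epsilon\in(0,1)$,
\begin{align*}
&\log\mathcal{N}\big(\epsilon\pnorm{C_{\phi_0}F_{I_j}(\tilde{\delta}_n)}{L_2(Q)},\mathcal{F}_{I_j}(\tilde{\delta}_n)\otimes(\mathcal{H}-h_0),L_2(Q)\big)\\
&\qquad\lesssim_{\mathcal{F},\phi_0}\frac{1}{\epsilon}\log\Big(\frac{1}{\epsilon}\Big)\log\Big(\frac{1}{\epsilon\pnorm{F_{I_j}(\tilde{\delta}_n)}{L_2(Q_X)}}\Big)+\epsilon^{-\gamma},
\end{align*}
because an $L_\infty$-net of $\mathcal{H}$ of mesh $c\epsilon$ induces an $\epsilon\pnorm{F_{I_j}(\tilde{\delta}_n)}{L_2(Q_X)}$-net of the products in $L_2(Q)$ (using $\pnorm{f|_{I_j}(h-h')}{L_2(Q)}\le\pnorm{h-h'}{\infty}\pnorm{F_{I_j}(\tilde{\delta}_n)}{L_2(Q_X)}$), while an $L_2(Q_X)$-net of $\mathcal{F}_{I_j}(\tilde{\delta}_n)$ of mesh $\propto\epsilon\pnorm{F_{I_j}(\tilde{\delta}_n)}{L_2(Q_X)}$, whose cardinality is controlled by Lemma \ref{lem:subset_VC} (using $\inf_{Q_X}\pnorm{F_{I_j}(\tilde{\delta}_n)}{L_2(Q_X)}\ge\sqrt{2}\,\tilde{\delta}_n>1/n$), induces one of comparable $L_2(Q)$-mesh after multiplication by $(h'-h_0)$. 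Integrating exactly as in (\ref{ineq:iso_reg_4}), the uniform entropy integral of the product class is $\lesssim\bar{L}_n+\int_0^1\epsilon^{-\gamma/2}\,\d{\epsilon}\lesssim\bar{L}_n$, so Lemma \ref{lem:KP_maximal_ineq} bounds the inner Rademacher complexity by $\lesssim_{\mathcal{F},\phi_0,\mathcal{H}}\bar{L}_n\sqrt{P_{I_j}F_{I_j}^2(\tilde{\delta}_n)}$ --- precisely the per-piece estimate in the proof of Lemma \ref{lem:l_2_shape_constraint_mep_est}. Summing over $j$ through the decomposition $\{1,\dots,m\}=\mathcal{J}_1\cup\mathcal{J}_2\cup\mathcal{J}_3$ and the Cauchy--Schwarz bounds $(I),(II),(III)$ of (\ref{ineq:iso_reg_3}) gives $(A)\lesssim\sqrt{m}\,\tilde{\delta}_n\sqrt{\log(1/\tilde{\delta}_n)}\,\bar{L}_n$, and since $\tilde{\delta}_n\le2(\delta_n\vee\pnorm{f_m-f_0^\ast}{L_2(P)})$ and $\log(1/\tilde{\delta}_n)\le\log(1/\delta_n)$, combining with $(B)$ completes the proof.

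I expect the product-class entropy estimate above to be the only real obstacle, and the only place where the additive structure must be confronted directly: one has to verify that the $L_\infty$-entropy exponent $\gamma$ of $\mathcal{H}$ enters the \emph{envelope-normalized} uniform entropy integral of $\mathcal{F}_{I_j}(\tilde{\delta}_n)\otimes(\mathcal{H}-h_0)$ only at unit scale --- so that the $h$-contribution is a bounded constant (here $\gamma<2$ is exactly what is needed) rather than a $\tilde{\delta}_n$-dependent blow-up that would spoil the rate --- and to keep careful track of the two metrics ($L_\infty$ on $\mathcal{H}$ versus $L_2(Q_X)$ on $\mathcal{F}_{I_j}(\tilde{\delta}_n)$) when forming the product net. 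Once this is in place, everything else is a transcription of the localization-and-summation argument of Lemma \ref{lem:l_2_shape_constraint_mep_est}.
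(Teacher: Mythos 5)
Your proof is correct and follows essentially the same route as the paper's: the same two-term decomposition around $f_m$, the same application of Lemma \ref{lem:entropy_ind} (respectively Lemma \ref{lem:cond_H2_1}) together with the Koltchinskii--Pollard inequality Lemma \ref{lem:KP_maximal_ineq} for the two pieces, and the same transcription of the piece-by-piece summation over the partition of Lemma \ref{lem:l_2_shape_constraint_mep_est}. The only difference is cosmetic: where the paper simply invokes Lemma \ref{lem:cond_H2_1} for the product-class entropy, you re-derive it inline, which is a perfectly acceptable expansion.
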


We need some technical lemmas. Recall $P_X,P_Z$ are the marginal probability distributions of $(X,Z)$, i.e. uniform distribution on $[0,1]$.

\begin{lemma}\label{lem:entropy_ind}
Let $\mathcal{H}$ be a class of measurable functions defined on $[0,1]$, and let $f \in L_2(P_X), g \in L_2(P)$. Then for any probability measure $Q$ on $[0,1]^2$,
\begin{align*}
\mathcal{N}\big(\epsilon \pnorm{f\otimes 1}{L_2(Q)}, f\otimes \big(\mathcal{H}-g\big), L_2(Q)\big)\leq \mathcal{N}_{}\big(\epsilon, \mathcal{H}, L_\infty\big).
\end{align*}
\end{lemma}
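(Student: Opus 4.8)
The plan is to construct an $L_2(Q)$-cover of $f\otimes(\mathcal{H}-g)$ directly from an $L_\infty$-cover of $\mathcal{H}$, exploiting the fact that the shift by $g$ is immaterial: for any $h,h'\in\mathcal{H}$ one has $f(x)\big(h(z)-g(x,z)\big)-f(x)\big(h'(z)-g(x,z)\big)=f(x)\big(h(z)-h'(z)\big)$, so a difference of two members of $f\otimes(\mathcal{H}-g)$ depends only on $f$ and on $h-h'$, and in particular neither on $g$ nor on the circumstance that $g$ may be a genuine function of both coordinates.

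First I would fix $\epsilon\in(0,1)$ and set $N\equiv\mathcal{N}(\epsilon,\mathcal{H},L_\infty)$, choosing $h_1,\dots,h_N$ forming an $\epsilon$-net for $\mathcal{H}$ in $\pnorm{\cdot}{\infty}$ (if $N=\infty$ there is nothing to prove). Given $h\in\mathcal{H}$, pick $h_j$ with $\pnorm{h-h_j}{\infty}\le\epsilon$. Bounding the integrand pointwise by $f(x)^2(h(z)-h_j(z))^2\le\epsilon^2 f(x)^2$,
\[
\bigpnorm{f\otimes(h-g)-f\otimes(h_j-g)}{L_2(Q)}^2
=\int_{[0,1]^2} f(x)^2\,(h(z)-h_j(z))^2\,\d{Q}(x,z)
\le \epsilon^2\int_{[0,1]^2} f(x)^2\,\d{Q}(x,z)
=\epsilon^2\,\pnorm{f\otimes 1}{L_2(Q)}^2 .
\]
Hence $\{f\otimes(h_j-g)\}_{j=1}^N$ is an $\epsilon\pnorm{f\otimes 1}{L_2(Q)}$-net for $f\otimes(\mathcal{H}-g)$ in $L_2(Q)$, which yields the claimed bound on the covering number.

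There is essentially no obstacle here; the only point deserving a line of care is that $\pnorm{h-h_j}{\infty}$ dominates the weighted $L_2(Q)$ fluctuation of $f(x)(h(z)-h_j(z))$ \emph{uniformly over all probability measures} $Q$ on $[0,1]^2$, which is precisely why an $L_\infty$ entropy hypothesis on $\mathcal{H}$ (as in (\ref{ineq:cond_H2_1})) is the natural input, and why an $L_2$ entropy hypothesis on $\mathcal{H}$ alone would not close this step. The uniform-entropy control of the product class $\mathcal{F}\otimes(\mathcal{H}-(\phi_0-f_0))$ obtained this way is then combined (with the analogous envelope/entropy estimates for $\mathcal{F}$) to feed the Koltchinskii--Pollard maximal inequality used in the proof of Lemma \ref{lem:additive_mep_est}.
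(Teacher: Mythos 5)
Your proof is correct and coincides with the paper's: both take a minimal $L_\infty$-net $\{h_j\}$ of $\mathcal{H}$, note that the shift by $g$ cancels in the difference $f\otimes(h-g)-f\otimes(h_j-g)=f\otimes(h-h_j)$, and bound the $L_2(Q)$ distance by $\|h-h_j\|_\infty\,\pnorm{f\otimes 1}{L_2(Q)}$ uniformly in $Q$. Your closing remark on why $L_\infty$ entropy of $\mathcal{H}$ (rather than $L_2$) is the right hypothesis is accurate and a useful observation, though it goes slightly beyond what the lemma itself requires.
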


\begin{lemma}\label{lem:cond_H2_1}
	Suppose the conditions on $\mathcal{H}$ in Theorem \ref{thm:additive_isotonic_reg} hold and $\mathcal{F}$ is the class of monotonic non-decreasing or convex functions on $[0,1]$. Then for any $\mathcal{F}'\subset \mathcal{F}\cap L_\infty(1)$ and any probability measure $Q$ on $[0,1]^2$, the entropy estimate
	\begin{align*}
	& \log \mathcal{N}\big(\epsilon \pnorm{F'\otimes 1}{L_2(Q)}, \mathcal{F}'\otimes (\mathcal{H}-(\phi_0-f_0)), L_2(Q) \big)\\
	&\qquad\lesssim  \frac{1}{\epsilon}\log \bigg(\frac{1}{\epsilon}\bigg) \log\bigg(\frac{1}{\epsilon\pnorm{F'\otimes 1}{L_2(Q)} }\bigg) \vee \epsilon^{-\gamma},\textrm{ for all }\epsilon \in (0,1)
	\end{align*}
	holds for any envelope $F'$ of $\mathcal{F}'$. The constant in the above estimate does not depend on the choice of $\mathcal{F}'$ or $Q$.
\end{lemma}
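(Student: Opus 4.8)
The plan is to reduce the entropy bound for the product class $\mathcal{F}'\otimes(\mathcal{H}-(\phi_0-f_0))$ to separate entropy bounds for $\mathcal{F}'$ and $\mathcal{H}$, taking advantage of the fact that the factor $h-(\phi_0-f_0)$ is uniformly bounded. Fix a probability measure $Q$ on $[0,1]^2$ and let $Q_X$ be its first marginal; note $\pnorm{F'\otimes 1}{L_2(Q)}=\pnorm{F'}{L_2(Q_X)}$ since $F'\otimes 1$ depends only on the first coordinate. The first step is the elementary inequality, valid for $f_1,f_2\in\mathcal{F}'$ and $h_1,h_2\in\mathcal{H}$ and obtained by inserting $\pm f_1\big(h_2-(\phi_0-f_0)\big)$,
\begin{align*}
&\pnorm{f_1\big(h_1-(\phi_0-f_0)\big)-f_2\big(h_2-(\phi_0-f_0)\big)}{L_2(Q)}\\
&\qquad\leq\pnorm{h_1-h_2}{\infty}\pnorm{F'\otimes 1}{L_2(Q)}+M\pnorm{f_1-f_2}{L_2(Q_X)},
\end{align*}
where $M\equiv 1+\sup_{h\in\mathcal{H}}\pnorm{h-(\phi_0-f_0)}{\infty}$ is finite because $\mathcal{H}\subset L_\infty(2\pnorm{\phi_0}{\infty})$ and $\pnorm{f_0}{\infty}\leq\pnorm{\phi_0}{\infty}$ by Jensen's inequality. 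Consequently, the ``product'' of an $(\epsilon/2)$-net of $\mathcal{H}$ in $\pnorm{\cdot}{\infty}$ with an $\tfrac{\epsilon}{2M}\pnorm{F'}{L_2(Q_X)}$-net of $\mathcal{F}'$ in $L_2(Q_X)$ forms an $\epsilon\pnorm{F'\otimes 1}{L_2(Q)}$-net of $\mathcal{F}'\otimes(\mathcal{H}-(\phi_0-f_0))$ in $L_2(Q)$, so that the left-hand side of the asserted bound is at most $\log\mathcal{N}(\epsilon/2,\mathcal{H},L_\infty)+\log\mathcal{N}\big(\tfrac{\epsilon}{2M}\pnorm{F'}{L_2(Q_X)},\mathcal{F}',L_2(Q_X)\big)$.

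Next I would control the two pieces separately. The $\mathcal{H}$-term is handled directly by the hypothesis \eqref{ineq:cond_H2_1}, which gives $\log\mathcal{N}(\epsilon/2,\mathcal{H},L_\infty)\lesssim\epsilon^{-\gamma}$. For the $\mathcal{F}'$-term I would appeal to Lemma~\ref{lem:subset_VC}: the class $\mathcal{F}\cap L_\infty(1)$ is VC-major for both the monotone and convex models (superlevel sets of bounded monotone functions on $[0,1]$ are intervals, and those of bounded convex functions are complements of intervals, each forming a VC class of sets), and $\mathcal{F}'\subset\mathcal{F}\cap L_\infty(1)$, so applying Lemma~\ref{lem:subset_VC} with probability measure $Q_X$ and radius parameter $\epsilon/(2M)\in(0,1)$ yields a bound of order $\tfrac{1}{\epsilon}\log(1/\epsilon)\log\big(1/(\epsilon\pnorm{F'\otimes 1}{L_2(Q)})\big)$, with the $M$-dependent factors harmlessly absorbed into the constants and logarithms since $M$ is fixed and $\epsilon\in(0,1)$. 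Adding the two estimates and using $a+b\leq 2(a\vee b)$ produces the claimed bound.

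There is essentially no hard analytic content here; the one point needing care is the assertion that the final constant does not depend on $\mathcal{F}'$ or $Q$. This is exactly why Lemma~\ref{lem:subset_VC} is stated with a constant depending only on the ambient VC-major class $\mathcal{F}\cap L_\infty(1)$ and uniform over all subclasses and all probability measures, and since $M$ depends only on $\pnorm{\phi_0}{\infty}$, the resulting constant in Lemma~\ref{lem:cond_H2_1} depends only on $\mathcal{F}$, $\mathcal{H}$ (through $\gamma$), and $\pnorm{\phi_0}{\infty}$. In short, the real work -- extracting an envelope-dependent entropy bound for shape-constrained classes -- is carried out in Lemma~\ref{lem:subset_VC}, and the present lemma is little more than the factorization and bookkeeping above; I do not anticipate any genuine obstacle.
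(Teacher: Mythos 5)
Your proof is correct and follows essentially the same route as the paper's: both build a product net from an $L_\infty$-net of $\mathcal{H}$ and an $L_2(Q_X)$-net of $\mathcal{F}'$ at scale proportional to $\epsilon\pnorm{F'}{L_2(Q_X)}$, and both bound the cross term using the uniform boundedness of $h-(\phi_0-f_0)$. The only cosmetic difference is that you rescale the two radii by $\epsilon/2$ and $\epsilon/(2M)$ up front to land exactly on an $\epsilon\pnorm{F'\otimes 1}{L_2(Q)}$-net, whereas the paper produces a $C\epsilon\pnorm{F'\otimes 1}{L_2(Q)}$-net and leaves the final rescaling implicit; both are harmless since the constants depend only on $\pnorm{\phi_0}{\infty}$ and $\mathcal{H}$.
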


The proofs of Lemmas \ref{lem:entropy_ind} and \ref{lem:cond_H2_1} are standard. We include the details in Section \ref{section:proof_remaining} for completeness.

\begin{proof}[Proof of Lemma \ref{lem:additive_mep_est}]
The proof follows the same strategy as that of Lemma \ref{lem:l_2_shape_constraint_mep_est}. We only prove the isotonic case $\mathcal{G}_m=\mathcal{M}_m$; the convex case follows by similar arguments. As in the proof of Lemma \ref{lem:l_2_shape_constraint_mep_est}, we will omit the explicit dependence on $L_\infty(B)$ if no confusion arises. Note that 
\begin{align}\label{ineq:additive_iso_1}
&\E \sup_{f \in \mathcal{F}: f-f_0^\ast \in L_2(\delta_n), h \in \mathcal{H}}\biggabs{\frac{1}{\sqrt{n}}\sum_{i=1}^{n} \epsilon_i (f-f_0^\ast)(X_i)(h-(\phi_0-f_0))(X_i, Z_i)}\\
&\leq \E \sup_{ \substack{ f \in \mathcal{F}: \pnorm{f-f_m}{L_2(P)} \leq \delta_n+\pnorm{f_m-f_0^\ast}{L_2(P)}, \\ h \in \mathcal{H}   }} \biggabs{\frac{1}{\sqrt{n}}\sum_{i=1}^{n} \epsilon_i (f-f_m)(X_i)(h-(\phi_0-f_0))(X_i, Z_i)}\nonumber\\
&\qquad\qquad+ \E \sup_{h \in \mathcal{H}} \biggabs{\frac{1}{\sqrt{n}}\sum_{i=1}^{n} \epsilon_i (f_m-f_0^\ast)(X_i)(h-(\phi_0-f_0))(X_i, Z_i)}\equiv (I)+(II).\nonumber
\end{align}
We first handle $(II)$ in (\ref{ineq:additive_iso_1}). The entropy assumption on $\mathcal{H}$ coupled with Lemma \ref{lem:entropy_ind} entails that the uniform entropy integral for the class $(f_m-f_0^\ast)\otimes \big(\mathcal{H}-(\phi_0-f_0)\big)$ converges. By Theorem 2.14.1 of \cite{van1996weak}, we have the following estimate
\begin{align*}
(II) &\leq C_{\mathcal{H}}\pnorm{f_m-f_0^\ast}{L_2(P)}.
\end{align*}
For the first term $(I)$ in (\ref{ineq:additive_iso_1}), we mimic the proof strategy in Lemma \ref{lem:l_2_shape_constraint_mep_est}: any piecewise constant $f_m \in \mathcal{M}_m$ has a representation $f_m=\sum_{j=1}^m g_j \bm{1}_{I_j}$, where $\{I_j=[x_j,x_{j+1}]\}_{j=1}^m$ is a partition of $[0,1]$ with $x_1=0,x_{m+1}=1$ and $g_j$ takes constant values on the intervals $I_j$.  Then for such $f_m \in \mathcal{M}_m$, write $\tilde{I}_j = I_j\times [0,1]$, we have
\begin{align*}
&\sum_{j=1}^m \E \bigg[\frac{\sqrt{n_j}}{\sqrt{n}}\E \bigg[ \sup_{ \substack{f \in \mathcal{F}: f-f_m \in L_2(\tilde{\delta}_n)\\ h \in \mathcal{H}} } \biggabs{\frac{1}{\sqrt{n_j}}\sum_{(X_i,Z_i) \in \tilde{I}_j} \epsilon_i (f-g_j)(X_i)(h-(\phi_0-f_0))(X_i,Z_i) }\\
&\qquad\qquad\qquad\qquad\qquad\qquad\qquad\qquad\qquad\qquad\qquad\qquad\bigg\lvert n_j(\bm{X},\bm{Z})=n_j\bigg]\bigg]\\
&\leq \sum_{j=1}^m \E \bigg[\frac{\sqrt{n_j}}{\sqrt{n}}\E \bigg[\sup_{\substack{f|_{I_j} \in \mathcal{F}|_{I_j}:\\ \pnorm{f}{\infty}\leq B+2\pnorm{f_0^\ast}{\infty}, \\  P_X f^2\leq \tilde{\delta}_n^2, h \in \mathcal{H} }}\biggabs{\frac{1}{\sqrt{n_j}}\sum_{(X_i,Z_I) \in \tilde{I}_j} \epsilon_i f\otimes (h-(\phi_0-f_0))|_{\tilde{I}_j} (X_i,Z_i) }\\
&\qquad\qquad\qquad\qquad\qquad\qquad\qquad\qquad\qquad\qquad\qquad\qquad \bigg\lvert n_j(\bm{X},\bm{Z})=n_j\bigg]\bigg]
\end{align*}
where $\tilde{\delta}_n\equiv \delta_n+\pnorm{f_m-f_0^\ast}{L_2(P)}$. Here $n_j(\bm{X},\bm{Z})=\sum_{i=1}^n \bm{1}_{\tilde{I}_j}(X_i,Z_i)$ and in the second line we used the fact that $(f-f_m)|_{I_j} \in \mathcal{F}|_{I_j}$. By Lemma \ref{lem:cond_H2_1} and the Koltchinskii-Pollard maximal inequality, each summand of the above display can be bounded up to a constant (depending on $\mathcal{F}, \mathcal{H}, \pnorm{\phi_0}{\infty}$) by 
\begin{align*}
&\int_0^1\sqrt{\frac{1}{\epsilon}  \log \bigg(\frac{1}{\epsilon}\bigg) \log\bigg(\frac{1}{\epsilon\pnorm{F_{I_j}(\tilde{\delta}_n)\otimes 1}{L_2(Q)} }\bigg)\vee \epsilon^{-\gamma}}\ \d{\epsilon} \\
&\qquad\qquad \times  \left(P_{\tilde{I}_j} F_{I_j}^2(\tilde{\delta}_n) \right)^{1/2}\lesssim \bar{L}_n\cdot  \sqrt{P_{I_j}F^2_{I_j}(\tilde{\delta}_n)},
\end{align*}
 where $P_{\tilde{I}_j}$ is the uniform distribution on $\tilde{I}_j$ and $F_{I_j}(\delta)$ is the envelope for $\big(\mathcal{F}\cap L_\infty(B+2\pnorm{f_0^\ast}{\infty})\cap L_2(\delta)\big)|_{I_j}$, and the inequality in the above display follows from similar arguments as in the proof of Lemma \ref{lem:l_2_shape_constraint_mep_est}. From here the proof proceeds along the same lines as that of the proof for Lemma \ref{lem:l_2_shape_constraint_mep_est}. 
\end{proof}

\begin{proof}[Proof of Theorem \ref{thm:additive_isotonic_reg}]
The proof of Theorem \ref{thm:additive_isotonic_reg} follows the arguments of the proof of Theorem \ref{thm:l_2_shape_constraint_oracle_ineq} by using Proposition \ref{prop:additive_lse} along with Lemmas \ref{lem:l_2_shape_constraint_mep_est} and \ref{lem:additive_mep_est}, combined with the stochastic boundedness of the LSE:
\begin{lemma}\label{lem:uniform_bound_additive_iso}
	Suppose that the hypotheses of Theorem \ref{thm:additive_isotonic_reg} hold (except that $\mathcal{H}$ is only required to have a continuously square integrable envelope $P_Z H^2<\infty$). Then both the canonical isotonic and convex LSEs in the additive regression model (\ref{lse_additive_model}) are stochastically bounded:  $\pnorm{\hat{f}_n}{\infty}=\mathcal{O}_{\mathbf{P}}(1)$.
\end{lemma}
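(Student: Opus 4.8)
The plan is to reduce the additive problem to the univariate boundedness result, Lemma \ref{lem:uniform_bound_iso}, by freezing the $\mathcal{H}$-component of the joint least squares estimator. Along any sample path, once $\hat h_n$ is fixed the definition (\ref{lse_additive_model}) forces $\hat f_n$ to be the canonical univariate isotonic (resp.\ convex) LSE built from the data $(X_i,R_i)$ with effective responses $R_i\equiv Y_i-\hat h_n(Z_i)=\phi_0(X_i,Z_i)+\xi_i-\hat h_n(Z_i)$; the only property of $\mathcal{H}$ we shall use is the pointwise envelope bound $\abs{\hat h_n(z)}\le H(z)$ together with $P_ZH^2<\infty$.

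Next I would invoke the machinery behind Lemma \ref{lem:uniform_bound_iso}. For the isotonic model this is the min-max formula (cf.\ \cite{robertson1988order}): letting $(1),\dots,(n)$ denote the indices sorting the $X_i$ in increasing order, $\hat f_n(X_{(i)})=\min_{v\ge i}\max_{u\le i}\frac{1}{v-u+1}\sum_{k=u}^v R_{(k)}$, so that by monotonicity
\begin{align*}
\pnorm{\hat f_n}{\infty}=\bigabs{\hat f_n(X_{(1)})}\vee\bigabs{\hat f_n(X_{(n)})},\\
\hat f_n(X_{(1)})=\min_{1\le v\le n}\frac1v\sum_{k=1}^vR_{(k)},\qquad \hat f_n(X_{(n)})=\max_{1\le u\le n}\frac{1}{n-u+1}\sum_{k=u}^nR_{(k)}.
\end{align*}
Writing $R_{(k)}=\phi_0(X_{(k)},Z_{(k)})+\xi_{(k)}-\hat h_n(Z_{(k)})$ and using $\abs{\phi_0}\le\pnorm{\phi_0}{\infty}$ and $\abs{\hat h_n(z)}\le H(z)$, these endpoint values are bounded, up to an additive constant multiple of $\pnorm{\phi_0}{\infty}$, by
\begin{align*}
\max_{1\le v\le n}\biggabs{\frac1v\sum_{k=1}^v\xi_{(k)}}\vee\max_{1\le u\le n}\biggabs{\frac{1}{n-u+1}\sum_{k=u}^n\xi_{(k)}}\\
\vee\max_{1\le v\le n}\frac1v\sum_{k=1}^vH(Z_{(k)})\vee\max_{1\le u\le n}\frac{1}{n-u+1}\sum_{k=u}^nH(Z_{(k)}).
\end{align*}
Since the errors and the $Z_i$ are each independent of the $X_i$, the sequences $(\xi_{(k)})_{k=1}^n$ and $(Z_{(k)})_{k=1}^n$ are i.i.d.\ (a relabelling by a permutation independent of their values); each of the four maxima is therefore a maximum over $v\le n$ of a forward- or backward-running average of an i.i.d.\ sequence with finite first absolute moment ($\E\abs{\xi_1}<\infty$ from the second-moment assumption, $P_ZH<\infty$ from $P_ZH^2<\infty$, after recentring $H$). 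Prefix averages of an i.i.d.\ sequence form a reversed martingale, so Doob's maximal inequality gives a tail bound of the form $\Prob(\,\cdot\,\ge t)\le C/t$ uniform in $n$; hence each maximum is $\mathcal{O}_{\mathbf{P}}(1)$ and $\pnorm{\hat f_n}{\infty}=\mathcal{O}_{\mathbf{P}}(1)$.

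For the convex model the argument is the same in outline but the min-max formula is replaced by the explicit characterization of the convex LSE of Groeneboom--Jongbloed--Wellner \cite{groeneboom2001estimation}, exactly as in the convex part of Lemma \ref{lem:uniform_bound_iso}. There the value of $\hat f_n$ at, and its linear extrapolation beyond, the extreme design points, together with its interior minimum, are expressed through suitable (possibly weighted) partial sums of the $R_{(k)}$; each such partial sum again splits into a $\phi_0$-part of size $\mathcal{O}(\pnorm{\phi_0}{\infty})$, a $\xi$-part controlled by the reversed-martingale/Doob estimate above, and an $\hat h_n$-part dominated by the corresponding partial sum of the $H(Z_{(k)})$.

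The main difficulty I anticipate is the convex case: extracting from the Groeneboom--Jongbloed--Wellner characterization the exact partial-sum functionals that dominate $\pnorm{\hat f_n}{\infty}$---in particular those governing the interior minimum and the behaviour outside $[X_{(1)},X_{(n)}]$---is delicate precisely because, unlike the isotonic case, there is no clean min-max formula; this is already the substantive content of the univariate Lemma \ref{lem:uniform_bound_iso}. Once those functionals are isolated, absorbing $\hat h_n$ into the responses and bounding it pointwise by the fixed envelope $H$ is routine, and the whole probabilistic content collapses to the uniform-in-$n$ control of prefix and suffix averages of i.i.d.\ sequences via Doob's inequality for reversed martingales, using only $\E\xi_1^2<\infty$ and $P_ZH^2<\infty$.
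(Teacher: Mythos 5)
Your isotonic argument is essentially the paper's: absorb $\hat h_n$ into the responses, use the min-max formula, bound $\hat h_n$ pointwise by the envelope $H$ using $P_Z H^2<\infty$, and observe that the conditional i.i.d.\ structure of $(\xi_{(k)})$ and $(Z_{(k)})$ given the $X_i$'s reduces everything to uniform-in-$n$ control of running averages of i.i.d.\ sequences. (The paper uses a blocking argument plus a Montgomery-Smith L\'evy-type inequality to get the $\mathcal{O}(1/t)$ tail, where you invoke Doob's inequality for the reversed martingale of prefix averages; both give what is needed.)

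The convex case, however, has a genuine gap. You assert that once the Groeneboom--Jongbloed--Wellner characterization is unpacked, the value of $\hat f_n$ at the endpoints \emph{and} its interior minimum are ``expressed through suitable (possibly weighted) partial sums of the $R_{(k)}$,'' so that the whole probabilistic content collapses to prefix/suffix averages of i.i.d.\ sequences. That is not what happens. The GJW characterization delivers an inequality of the form (after a division)
\[
\hat f_n(0)\ \le\ \frac{1}{1-\beta_{\tau_n}}\,\frac{1}{\tau_n-1}\sum_{i<\tau_n}R_i\ +\ \frac{\beta_{\tau_n}}{1-\beta_{\tau_n}}\,\Bigl\lvert\inf_{x\in[0,1]}\hat f_n(x)\Bigr\rvert,
\]
so controlling the endpoint values \emph{requires} a separate bound on $\bigl\lvert\inf_{x}\hat f_n(x)\bigr\rvert$, and that quantity is \emph{not} a partial-sum functional of the responses. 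The paper controls it by a completely different mechanism: first it shows, almost surely for $n$ large, that $\inf_{(x,z)\in[1/4,3/4]^2}\lvert\hat\phi_n(x,z)-\phi_0(x,z)\rvert\le M$, by comparing the empirical squared error of the LSE $\hat\phi_n$ to that of a fixed competitor $h'\in\mathcal H$ and invoking the strong law; then it uses convexity to show that if the interior minimum $M_n=\lvert\hat f_n(x_n)\rvert$ were large, the LSE would necessarily take large values on a corner region such as $[1/16,1/8]\times[0,1]$, forcing the empirical squared error to be too large and contradicting optimality. None of this is a martingale or maximal-inequality argument. Without supplying this global optimality-plus-consistency step, your proposed convex proof is incomplete: bounding $\hat h_n$ by $H$ and controlling running averages of $R_{(k)}$ is genuinely insufficient to pin down the interior minimum of a convex LSE.
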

The proof of this lemma will be detailed in Section \ref{section:proof_remaining}, and hence completes the proof of Theorem \ref{thm:additive_isotonic_reg}.
\end{proof}

\section{Proofs of technical results}\label{section:proof_remaining}

In this section, we collect the proofs for technical results in three groups:
\begin{enumerate}
	\item the key Proposition \ref{prop:lower_bound_envelope} used in the proof of Theorem \ref{thm:envelope_rate_lower_bound};
	\item entropy results in Lemmas \ref{lem:subset_VC}, \ref{lem:entropy_ind} and \ref{lem:cond_H2_1};
	\item stochastic boundedness for shape-restricted LSEs in Lemmas \ref{lem:uniform_bound_iso} and \ref{lem:uniform_bound_additive_iso}. 
\end{enumerate}

\subsection{Proof of Proposition \ref{prop:lower_bound_envelope}}

In the next few subsections, we will prove Proposition \ref{prop:lower_bound_envelope} step by step.

\subsubsection{Construction of $\tilde{\mathcal{F}}$}

First consider the case $\gamma \in (0,1)$. We will do the construction iteratively. For $l=1$, since $[0,1]$ contains $\floor{2^{\frac{1}{1-\gamma}} }$ many equal-length intervals (with length $\big(2^{\frac{1}{1-\gamma}} \big)^{-1}$), we can pick $2$ intervals among them; this is denoted $\tilde{\mathscr{C}}_1$. For $l=2$, each interval in $\tilde{\mathscr{C}}_1$ contains $\floor{2^{\frac{1}{1-\gamma}} }$ many equal-length subintervals with length $\big(2^{\frac{1}{1-\gamma}} \big)^{-2}$, we can pick $2$ subintervals among each of the interval; this is denoted $\tilde{\mathscr{C}}_2$. In this way we can define iteratively $\tilde{\mathscr{C}}_l$ for any $l \in \N$. Let $\tilde{\mathcal{F}}_l\equiv \{\bm{1}_{I}:I \in \tilde{\mathscr{C}}_l\}$. Clearly $\abs{\tilde{\mathcal{F}}_l}=2^l$ and contains indicators over intervals in $[0,1]$ with length $\big(2^{\frac{1}{1-\gamma}} \big)^{-l}$. Now let $\tilde{\mathcal{F}}\equiv \cup_{l \in \N} \tilde{\mathcal{F}}_l \cup \{\bm{0}\}$ where $\bm{0}$ denotes a mapping taking identical value $0$. Next, for $\gamma=1$, let $\tilde{\mathcal{F}}\equiv \{\bm{1}_{[0,\delta]}:0\leq \delta\leq 1\}$.

We show that the constructed $\tilde{\mathcal{F}}$ satisfies the desired growth condition (\ref{cond:size_envelope}). Recall $P$ is the uniform distribution on $[0,1]$.

\begin{lemma}\label{lem:tilde_F_cond}
	It holds that
	\begin{align*}
	\pnorm{ \tilde{F}(\delta)}{L_2(P)} \leq \sqrt{2} \delta^\gamma,
	\end{align*}
	where $\tilde{F}(\delta)$ denotes the envelope for $\tilde{\mathcal{F}}(\delta)$. 
\end{lemma}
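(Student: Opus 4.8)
The plan is to read off the envelope $\tilde{F}(\delta)$ explicitly from the tree structure of the construction, compute its $L_2(P)$ norm exactly, and then match exponents. Set $\kappa\equiv 2^{1/(1-\gamma)}$, so that for $\gamma\in(0,1)$ the collection $\tilde{\mathscr{C}}_l$ consists of $2^l$ pairwise disjoint subintervals of $[0,1]$, each of length $\kappa^{-l}$, and the construction is nested: every interval of $\tilde{\mathscr{C}}_{l+1}$ lies inside a (unique) interval of $\tilde{\mathscr{C}}_l$, hence $\bigcup_{I\in\tilde{\mathscr{C}}_{l+1}}I\subseteq\bigcup_{I\in\tilde{\mathscr{C}}_{l}}I$. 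Since $P$ is uniform on $[0,1]$, $P\bm{1}_I^2=\abs{I}=\kappa^{-l}$ for $\bm{1}_I\in\tilde{\mathcal{F}}_l$; hence, for a fixed $\delta>0$, letting $l_0=l_0(\delta)$ be the least integer $l\geq 0$ with $\kappa^{-l}\leq\delta^2$ (which exists because $\kappa>1$), we have $\tilde{\mathcal{F}}(\delta)=\{\bm{0}\}\cup\bigcup_{l\geq l_0}\tilde{\mathcal{F}}_l$.

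First I would identify the envelope. By the nesting property, $\bigcup_{l\geq l_0}\bigcup_{I\in\tilde{\mathscr{C}}_l}I=A_{l_0}$, where $A_{l_0}\equiv\bigcup_{I\in\tilde{\mathscr{C}}_{l_0}}I$; since $\bm{0}$ contributes nothing to a pointwise supremum, this gives $\tilde{F}(\delta)=\bm{1}_{A_{l_0}}$. The $2^{l_0}$ intervals making up $A_{l_0}$ are disjoint, so $P(A_{l_0})=\sum_{I\in\tilde{\mathscr{C}}_{l_0}}\abs{I}=2^{l_0}\kappa^{-l_0}$. The one computation to carry out is then the exponent identity $2^{l_0}\kappa^{-l_0}=2^{-l_0\gamma/(1-\gamma)}=(\kappa^{-l_0})^{\gamma}$, after which the defining inequality $\kappa^{-l_0}\leq\delta^2$ yields
\[
\pnorm{\tilde{F}(\delta)}{L_2(P)}^{2}=P(A_{l_0})=\bigl(\kappa^{-l_0}\bigr)^{\gamma}\leq\delta^{2\gamma},
\]
i.e. $\pnorm{\tilde{F}(\delta)}{L_2(P)}\leq\delta^{\gamma}\leq\sqrt{2}\,\delta^{\gamma}$. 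The case $\gamma=1$ is handled separately and is immediate: there $\tilde{\mathcal{F}}(\delta)=\{\bm{1}_{[0,a]}:0\leq a\leq\delta^{2}\wedge 1\}$, so $\tilde{F}(\delta)=\bm{1}_{[0,\delta^{2}\wedge 1]}$ and $\pnorm{\tilde{F}(\delta)}{L_2(P)}^{2}=\delta^{2}\wedge 1\leq\delta^{2}$.

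There is no substantial obstacle here; the argument is essentially bookkeeping, and the stated constant $\sqrt{2}$ is comfortable slack (one actually obtains $\delta^{\gamma}$). The two points that deserve a little care are: (i) the nesting of the $\tilde{\mathscr{C}}_l$'s, which is exactly what collapses the supremum defining $\tilde{F}(\delta)$ down to the single level $l_0$ rather than leaving an unbounded union; and (ii) the degenerate range $\delta^2\geq 1$, where $l_0=0$, $A_{l_0}=[0,1]$, and the claimed bound reads $1\leq\delta^{2\gamma}$, which holds since $\gamma>0$ forces $\delta^{2\gamma}\geq 1$ whenever $\delta\geq 1$.
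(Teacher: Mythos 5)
Your proof is correct and follows essentially the same route as the paper: use the nesting of the $\tilde{\mathscr{C}}_l$'s to identify $\tilde{F}(\delta)=\bm{1}_{A_{l_0}}$ and then bound $P(A_{l_0})$ using the defining inequality $\kappa^{-l_0}\leq\delta^2$. Your exact exponent identity $2^{l_0}\kappa^{-l_0}=(\kappa^{-l_0})^\gamma$ actually yields the sharper constant $1$ in place of $\sqrt{2}$ (the paper instead bounds $N(\delta)\leq 2\delta^{-(2-2\gamma)}$ and the interval length by $\delta^2$ separately, incurring the factor $2$), but this is a cosmetic improvement and both suffice.
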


\begin{proof}
	The claim is trivial for $\gamma=1$. For $\gamma \in (0,1)$, since each element in  $\tilde{\mathscr{C}}_{l+1}$ is contained in some element in $\tilde{\mathscr{C}}_{l}$, we only need to count the number of intervals for the smallest level $l(\delta)$ such that the length of intervals in $\tilde{\mathcal{F}}_{l(\delta)}$ is no more than $\delta^2$. In other words, $l(\delta)$ is the integer for which
	\begin{align*}
	\big(2^{\frac{1}{1-\gamma}} \big)^{-l(\delta)}\leq \delta^2,\quad \big(2^{\frac{1}{1-\gamma}} \big)^{-l(\delta)+1}> \delta^2.
	\end{align*}
	Hence the number of intervals in $\tilde{\mathcal{F}}_{l(\delta)}$ is $N(\delta)=2^{l(\delta)} \in [\delta^{-(2-2\gamma)},2\delta^{-(2-2\gamma)}]$, from which the claim of the lemma holds.
\end{proof}

\subsubsection{Proof of claim (1) of Proposition \ref{prop:lower_bound_envelope}}

The following standard Paley-Zygmund lower bound will be used.
\begin{lemma}[Paley-Zygmund]\label{lem:paley_zygmund}
	Let $Z$ be any non-negative random variable. Then for any $\epsilon>0$,  $
	\Prob (Z>\epsilon \E Z) \geq \left(\frac{(1-\epsilon) \E Z}{(\E Z^q)^{1/q}}\right)^{q'}$, 
	where $q,q' \in (1,\infty)$ are conjugate indices: $1/q+1/q'=1$.
\end{lemma}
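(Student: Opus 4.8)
The plan is to prove this by the standard first-moment truncation argument, with Cauchy--Schwarz replaced by Hölder to get the $L^q$ version. First I would dispose of the degenerate cases: if $\E Z^q=\infty$ the right-hand side is $0$ and there is nothing to prove, and if $\E Z=0$ then $Z=0$ almost surely so the statement is trivial (or vacuous); so we may assume $0<\E Z$ and $\E Z^q<\infty$, and moreover it suffices to treat $\epsilon\in(0,1)$, since for $\epsilon\geq 1$ the quantity $(1-\epsilon)\E Z$ is non-positive and the asserted lower bound is $\leq 0\leq \Prob(Z>\epsilon\E Z)$.

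The heart of the argument is to split the first moment across the threshold $\epsilon\E Z$:
\begin{align*}
\E Z = \E\big[Z\,\bm{1}_{\{Z\leq \epsilon \E Z\}}\big] + \E\big[Z\,\bm{1}_{\{Z> \epsilon \E Z\}}\big] \leq \epsilon\,\E Z + \E\big[Z\,\bm{1}_{\{Z> \epsilon \E Z\}}\big],
\end{align*}
bounding $Z$ by $\epsilon\E Z$ on the first event. To the second term I would apply Hölder's inequality with conjugate exponents $q,q'$:
\begin{align*}
\E\big[Z\,\bm{1}_{\{Z> \epsilon \E Z\}}\big] \leq \big(\E Z^q\big)^{1/q}\,\big(\Prob(Z>\epsilon\E Z)\big)^{1/q'}.
\end{align*}
Combining the two displays gives $(1-\epsilon)\,\E Z \leq (\E Z^q)^{1/q}\,\big(\Prob(Z>\epsilon\E Z)\big)^{1/q'}$, and since $1-\epsilon>0$ and $\E Z^q\in(0,\infty)$ one may divide and raise both sides to the power $q'>1$ to obtain exactly
\begin{align*}
\Prob(Z>\epsilon\E Z)\geq \left(\frac{(1-\epsilon)\E Z}{(\E Z^q)^{1/q}}\right)^{q'}.
\end{align*}

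There is essentially no obstacle here: the only points needing a word of care are the degenerate cases dispatched at the outset and the implicit restriction to $\epsilon\in(0,1)$, which is the only interesting regime and the one in which the lemma will be invoked in the proof of Proposition \ref{prop:lower_bound_envelope} (with $Z$ taken to be a suitable nonnegative empirical-process functional, after a moment bound of the type furnished by Lemma \ref{lem:p_moment_estimate} is used to control $\E Z^q$).
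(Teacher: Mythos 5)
Your proof is correct and is the standard Paley--Zygmund argument (threshold decomposition of $\E Z$ plus Hölder with exponents $q,q'$); the paper states this lemma without proof, as a known result, and your derivation is exactly the canonical one. The degenerate cases ($\E Z^q=\infty$, $\E Z=0$, $\epsilon\geq 1$) are correctly dispatched, and the final rearrangement is valid since $1-\epsilon>0$ and $0<\E Z^q<\infty$ in the remaining regime.
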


We need the following exact characterization concerning the size of maxima of a sequence of independent random variables due to \cite{gine1983central}, see also Corollary 1.4.2 of \cite{de2012decoupling}.

\begin{lemma}\label{lem:characterization_maxima}
	Let $\xi_1,\ldots,\xi_n$ be a sequence of independent non-negative random variables such that $\pnorm{\xi_i}{r}<\infty$ for all $1\leq i\leq n$. For $\lambda>0$, set $
	\delta_0(\lambda)\equiv \inf\left\{t>0: \sum_{i=1}^n \Prob(\xi_i>t)\leq \lambda\right\}$.
	Then
	\begin{align*}
	\frac{1}{1+\lambda}\sum_{i=1}^n \E \xi_i^r\bm{1}_{\xi_i>\delta_0}\leq \E\max_{1\leq i\leq n} \xi_i^r\leq \frac{1}{1\wedge \lambda}\sum_{i=1}^n \E \xi_i^r\bm{1}_{\xi_i>\delta_0}.
	\end{align*}
\end{lemma}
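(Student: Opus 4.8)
The plan is to derive both inequalities from the layer-cake representation of the $r$-th moment of a maximum, combined with one elementary consequence of independence. Write $q_i(t)\equiv\Prob(\xi_i^r>t)=\Prob(\xi_i>t^{1/r})$ and $\sigma(t)\equiv\sum_{i=1}^n q_i(t)$. The set $\{t>0:\sum_i\Prob(\xi_i>t)\le\lambda\}$ is of the form $[\delta_0,\infty)$ (the map $t\mapsto\sum_i\Prob(\xi_i>t)$ being non-increasing and right-continuous), so $\sigma(t)\le\lambda$ exactly for $t\ge\delta_0^r$, while $\sigma(t)>\lambda$ for $0<t<\delta_0^r$; moreover, taking left limits, $\sum_i\Prob(\xi_i\ge\delta_0)\ge\lambda$. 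By independence $\Prob\big(\max_{1\le i\le n}\xi_i^r>t\big)=1-\prod_{i=1}^n(1-q_i(t))$, and from $1-x\le e^{-x}\le(1+x)^{-1}$ on $[0,1]$ together with the union bound one gets the two-sided bound
\begin{align*}
\frac{\sigma(t)}{1+\sigma(t)}\ \le\ \Prob\Big(\max_{1\le i\le n}\xi_i^r>t\Big)\ \le\ 1\wedge\sigma(t).
\end{align*}

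Next I would split $\E\max_i\xi_i^r=\int_0^\infty\Prob(\max_i\xi_i^r>t)\,\d{t}=\int_0^{\delta_0^r}(\cdots)\,\d{t}+\int_{\delta_0^r}^\infty(\cdots)\,\d{t}$ and bound the two pieces separately using the display above. On $(\delta_0^r,\infty)$ we have $\sigma(t)\le\lambda$, hence $\tfrac{\sigma(t)}{1+\lambda}\le\Prob(\max_i\xi_i^r>t)\le\sigma(t)$, and Fubini gives $\int_{\delta_0^r}^\infty\sigma(t)\,\d{t}=\sum_i\E[(\xi_i^r-\delta_0^r)\bm{1}_{\xi_i>\delta_0}]=A-\delta_0^r q$, where I abbreviate $A\equiv\sum_i\E[\xi_i^r\bm{1}_{\xi_i>\delta_0}]$ and $q\equiv\sum_i\Prob(\xi_i>\delta_0)\le\lambda$. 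On $(0,\delta_0^r)$ we have $\sigma(t)>\lambda$, hence $\tfrac{\lambda}{1+\lambda}\le\Prob(\max_i\xi_i^r>t)\le1$, contributing a quantity between $\tfrac{\lambda}{1+\lambda}\delta_0^r$ and $\delta_0^r$. Adding the two pieces yields
\begin{align*}
\frac{A}{1+\lambda}+\frac{\delta_0^r(\lambda-q)}{1+\lambda}\ \le\ \E\max_{1\le i\le n}\xi_i^r\ \le\ A+\delta_0^r(1-q).
\end{align*}

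The lower bound now follows at once, since $q\le\lambda$ makes the extra term $\tfrac{\delta_0^r(\lambda-q)}{1+\lambda}$ nonnegative, giving $\E\max_i\xi_i^r\ge A/(1+\lambda)$. For the upper bound I would absorb the stray $\delta_0^r(1-q)$ into $A/(1\wedge\lambda)$: combining $A\ge\delta_0^r q$ with $\sum_i\Prob(\xi_i\ge\delta_0)\ge\lambda$, one treats $\lambda\ge1$ (where one wants $q\ge1$) and $\lambda<1$ (where one wants $q\ge\lambda$, and then $A(\tfrac1\lambda-1)\ge\delta_0^r(1-\lambda)\ge\delta_0^r(1-q)$) separately. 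The main — and essentially only — obstacle is precisely this last step, because it requires $\sum_i\Prob(\xi_i>\delta_0)\ge\lambda$, which can fail when some $\xi_i$ has an atom at $\delta_0$ (the quantities $\sum_i\Prob(\xi_i>\delta_0)$ and $\sum_i\Prob(\xi_i\ge\delta_0)$ then disagree). One handles this in the standard way: reduce to the case where each $F_i$ is continuous at $\delta_0$ (which is automatic in the i.i.d.\ absolutely-continuous error setting in which the lemma is used), or equivalently state the upper bound with $\bm{1}_{\xi_i\ge\delta_0}$; away from this boundary technicality the argument is the elementary bookkeeping above. (This result is classical, due to \cite{gine1983central}; see also Corollary 1.4.2 of \cite{de2012decoupling}.)
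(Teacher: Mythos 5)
The paper states this lemma as a black box, citing \cite{gine1983central} and Corollary 1.4.2 of \cite{de2012decoupling}, and gives no in-paper proof, so there is nothing to compare your argument against; the job is really to check that the cited result holds. Your argument is correct and is essentially the standard one underlying those references: the layer-cake formula combined with the two-sided tail bound $\sigma(t)/(1+\sigma(t))\le\Prob(\max_i\xi_i^r>t)\le 1\wedge\sigma(t)$ (lower bound via $\prod_i(1-q_i(t))\le e^{-\sigma(t)}$ together with $e^{-\sigma}\le(1+\sigma)^{-1}$), split at $t=\delta_0^r$, and the Fubini evaluation $\int_{\delta_0^r}^\infty\sigma=A-\delta_0^r q$ with $A=\sum_i\E\xi_i^r\bm{1}_{\xi_i>\delta_0}$, $q=\sum_i\Prob(\xi_i>\delta_0)\le\lambda$, is exactly the right bookkeeping; the case analysis $\lambda\ge1$ versus $\lambda<1$ for absorbing $\delta_0^r(1-q)$ is also handled correctly. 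Your caveat about an atom at $\delta_0$ is a genuine point, not a technicality to be waved away: as stated with the strict indicator $\bm{1}_{\xi_i>\delta_0}$, the upper bound can fail outright. Take $n=1$, $\xi_1\equiv 1$ a.s., $r=1$, $\lambda=1/2$: then $\delta_0=1$, $\sum_i\E\xi_i^r\bm{1}_{\xi_i>\delta_0}=0$, yet $\E\max_i\xi_i^r=1$, and it is precisely your step requiring $\sum_i\Prob(\xi_i>\delta_0)\ge\lambda$ that breaks. The correct form of the upper bound uses $\bm{1}_{\xi_i\ge\delta_0}$; the paper's $>$ is a harmless transcription slip, since the lemma is invoked only for symmetric $\alpha$-stable errors, which are absolutely continuous, so no atom is present.
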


\begin{proof}[Proof of Proposition \ref{prop:lower_bound_envelope}, claim (1)]
	\textbf{(Case 1: $0< \gamma <1$).} Recall $\delta_2 \equiv \vartheta n^{-\frac{1}{2(2-\gamma)}}$. Then by the proof of Lemma \ref{lem:tilde_F_cond}, we see that there exists some level $l(\delta_2) \in \N$ such that the $N(\delta_2)$ many intervals $\{I_l\}_{l=1}^{N(\delta_2)}$ in $\tilde{\mathcal{F}}_{l(\delta_2)}$ have length at most $\delta_2^2$ and at least $2^{-1/(1-\gamma)}\delta_2^2$, while the number of intervals satisfies $\vartheta^{-(2-2\gamma)} n^{\frac{1-\gamma}{2-\gamma}}\leq N(\delta_2)\leq 2\vartheta^{-(2-2\gamma)} n^{\frac{1-\gamma}{2-\gamma}}$.  Let $\mathcal{E}_n$ be the event that all intervals $\{I_l\}_{l=1}^{N(\delta_2)}$ contain at least $2^{-\frac{2-\gamma}{1-\gamma}}\vartheta^2 n^{\frac{1-\gamma}{2-\gamma}}$ of the $X_i$'s and at most $\frac{5}{4}\vartheta^2 n^{\frac{1-\gamma}{2-\gamma}}$ of the $X_i$'s. Then by a union bound and Bernstein's inequality (cf. (2.10) of \cite{boucheron2013concentration}),
	\begin{align}
	\label{ineq:lower_bound_envelope_1}
	\Prob(\mathcal{E}_n^c)&\leq \Prob\bigg(\max_{1\leq l\leq N(\delta_2)} \bigg\lvert\sum_{i=1}^n \bm{1}_{I_l}(X_i)-n\abs{I_l} \bigg\rvert>2^{-\frac{2-\gamma}{1-\gamma}}\vartheta^2 n^{\frac{1-\gamma}{2-\gamma}}  \bigg)\\
	&\leq 2\vartheta^{-(2-2\gamma)} n^{\frac{1-\gamma}{2-\gamma}}\exp\big(-c_\gamma \vartheta^2 n^{\frac{1-\gamma}{2-\gamma}} \big).\nonumber
	\end{align}
	Let $\mathcal{I}_l\equiv \{X_i \in I_l\}$ for $1\leq l\leq N(\delta_2)$ and $\{\xi_i^{(l)}\}_{i,l\geq 1}$ be i.i.d. random variables with the same law as $\xi_1$. Then for some $t_n>0$ to be determined later,
	\begin{align}
	\label{ineq:lower_bound_envelope_2}
	\Prob\bigg( \sup_{f \in \tilde{\mathcal{F}}: Pf^2\leq \delta_2^2} \bigg\lvert \sum_{i=1}^n \xi_i f(X_i)\bigg\lvert\geq t_n\bigg)
	&\geq \E_{\bm{X}} \bigg[ \Prob_{\bm{\xi}} \bigg(\max_{1\leq l\leq N(\delta_2)}  \bigg\lvert \sum_{i=1}^n \xi_i \bm{1}_{I_l} (X_i)\bigg\rvert\geq t_n \bigg)\bm{1}_{\mathcal{E}_n}\bigg]\\
	&= \E_{\bm{X}} \bigg[ \Prob_{\bm{\xi}} \bigg(\max_{1\leq l\leq N(\delta_2)}  \bigg\lvert\sum_{i=1}^{ \abs{\mathcal{I}_l} } \xi_i^{(l)} \bigg\rvert\geq t_n \bigg)\bm{1}_{\mathcal{E}_n}\bigg].\nonumber
	\end{align}
	Our goal now is to make a good choice of the law for $\xi_\cdot^{(\cdot)}$'s so that we may obtain a good estimate for $t_n$ and thereby using the Paley-Zygmund argument. Let $\xi_1$ be distributed according to the symmetric $\alpha_\epsilon\equiv 2-\epsilon$ stable law, i.e. the characteristic function of $\xi_1$ is $\varphi_{\xi_1}(t)=\exp(-\abs{t}^{\alpha_\epsilon})$.
	Apparently, $k^{-1/\alpha_\epsilon}\sum_{i=1}^k \xi_i^{(l)}$ has the same law as that of $\xi_1$, and hence we can take
	\begin{align}\label{ineq:lower_bound_envelope_3}
	t_n= \frac{1}{2} \bigg(2^{-\frac{2-\gamma}{1-\gamma}}\vartheta^2 n^{\frac{1-\gamma}{2-\gamma}}\bigg)^{1/\alpha_\epsilon}\E_{\bm{\xi}} \max_{1\leq l\leq N(\delta_2)}  \abs{\xi_l}.
	\end{align}
	Then the conditional probability in the last line of (\ref{ineq:lower_bound_envelope_2}) can be bounded from below by
	\begin{align}\label{ineq:lower_bound_envelope_4}
	\Prob_{\bm{\xi}}\bigg(\max_{1\leq l\leq N(\delta_2)} \abs{\xi_l}\geq \frac{1}{2} \E_{\bm{\xi}} \max_{1\leq l\leq N(\delta_2)}  \abs{\xi_l} \bigg)\geq \bigg(\frac{ \E_{\bm{\xi}} \max_{1\leq l\leq N(\delta_2)}  \abs{\xi_l}   }{ 2 \left(\E_{\bm{\xi}} \max_{1\leq l\leq N(\delta_2)}  \abs{\xi_l}^r\right)^{1/r} }\bigg)^{r'}
	\end{align}
	for some conjugate indices $(r,r') \in (1,\infty)^2$. (\ref{ineq:lower_bound_envelope_2}) and (\ref{ineq:lower_bound_envelope_4}) suggest that we need to derive a lower bound for $\E_{\bm{\xi}} \max_{1\leq l\leq N(\delta_2)}  \abs{\xi_l} $ and an upper bound for $\E_{\bm{\xi}} \max_{1\leq l\leq N(\delta_2)}  \abs{\xi_l} ^r$. This can be done via the help of Lemma \ref{lem:characterization_maxima}: since $
	\Prob(\abs{\xi_1}>t)\asymp \frac{C_\epsilon}{1+t^{\alpha_\epsilon}}$  (cf. Property 1.2.15, page 16 of \cite{Samorodnitsky1994stable}),
	we can choose $\lambda\equiv 1$ and $\delta_0\asymp_{\epsilon} N(\delta_2)^{1/\alpha_\epsilon}$ to see that
	\begin{align*}
	\E_{\bm{\xi}} \max_{1\leq l\leq N(\delta_2)}  \abs{\xi_l}^r &\asymp \sum_{l=1}^{N(\delta_2)} \E \abs{\xi_l}^r\bm{1}_{\xi_l>\delta_0}\\
	&= N(\delta_2)\bigg( \Prob\left(\abs{\xi_1}>\delta_0\right) \int_0^{\delta_0} r u^{r-1}\ \d{u}\\
	&\qquad\qquad\qquad+\int_{\delta_0}^{\infty} ru^{r-1} \Prob\left(\abs{\xi_1}>u\right)\ \d{u}\bigg)\\
	&\asymp_{\epsilon, r} N(\delta_2)^{r/\alpha_\epsilon}.
	\end{align*}
	Now as long as $\epsilon<1/2$, we may choose $r>1$ close enough to $1$, e.g. $r=1.1$, to conclude that there exists $\mathfrak{p}_1 \in (0,1/8)$ that only depends on $\epsilon$ such that
	\begin{align}\label{ineq:lower_bound_envelope_5}
	\textrm{Left hand side of } (\ref{ineq:lower_bound_envelope_4})\geq 8\mathfrak{p}_1.
	\end{align}
	Combining (\ref{ineq:lower_bound_envelope_1}), (\ref{ineq:lower_bound_envelope_2}) and (\ref{ineq:lower_bound_envelope_5}), and the fact that $t_n= c_1 \big(\vartheta^{\gamma} n^{\frac{1-\gamma}{2-\gamma}}\big)^{2/\alpha_\epsilon}$ for some constant $c_1$ depending on $\epsilon,\gamma$ only, we have that for $n$ large enough depending on $\vartheta,\gamma$, 
	\begin{align}\label{ineq:lower_bound_envelope_6}
	\Prob\bigg( \sup_{f \in \tilde{\mathcal{F}}: Pf^2\leq \delta_2^2} \bigg\lvert\sum_{i=1}^n \xi_i f(X_i)\bigg\rvert\geq c_1 \big(\vartheta^{\gamma} n^{\frac{1-\gamma}{2-\gamma}} \big)^{2/\alpha_\epsilon}\bigg)&\geq 4\mathfrak{p}_1.
	\end{align}
	On the other hand, by Talagrand's concentration inequality (cf. Lemma \ref{lem:talagrand_conc_ineq}) and the contraction principle for Rademacher processes, we have with probability at least $1-2\mathfrak{p}_1$,
	\begin{align}\label{ineq:lower_bound_envelope_7}
	\sup_{f \in \tilde{\mathcal{F}}: Pf^2\leq \delta_2^2} \abs{\G_n(f^2)}&\leq C \big(\E \sup_{f \in \tilde{\mathcal{F}}: Pf^2\leq \delta_2^2} \abs{\G_n f} +\delta_2 \sqrt{ \log(1/2\mathfrak{p}_1) }+\log(1/2\mathfrak{p}_1)/\sqrt{n}\big)\\
	&\leq C_\epsilon \cdot \delta_2\sqrt{\log (1/\delta_2)} \leq C_{\epsilon,\gamma} \vartheta n^{-\frac{1}{2(2-\gamma)}} \sqrt{\log n}.\nonumber
	\end{align}
	Combining (\ref{ineq:lower_bound_envelope_6})-(\ref{ineq:lower_bound_envelope_7}), we see that with probability at least $2\mathfrak{p}_1$, 
	\begin{align*}
	&\sup_{f \in \tilde{\mathcal{F}}: Pf^2\leq \delta_2^2} (\Prob_n -P)(2\xi f-f^2)\\
	&\geq 2 c_1 \big(\vartheta^{\gamma} n^{\frac{1-\gamma}{2-\gamma}} \big)^{2/\alpha_\epsilon} \cdot n^{-1} -C_{\epsilon,\gamma} \vartheta n^{-\frac{1}{2(2-\gamma)}-\frac{1}{2}} \sqrt{\log n}\\
	& \geq  2c_1 \vartheta^{\gamma} n^{-\frac{1 }{2-\gamma}}\cdot \tau_n(\epsilon,\gamma)-C_{\epsilon,\gamma} \vartheta n^{-\frac{(3-\gamma)/2}{(2-\gamma)}} \sqrt{\log n}\geq c_1\vartheta^{\gamma} n^{-\frac{1 }{2-\gamma}}\cdot \tau_n(\epsilon,\gamma)
	\end{align*}
	for $n$ large enough depending on $\epsilon,\vartheta,\gamma$, where $\tau_n(\epsilon,\gamma)\equiv n^{\frac{1-\gamma}{2-\gamma}\cdot \frac{\epsilon}{2-\epsilon}}$. Hence with the same probability estimate, 
	\begin{align*}
	F_n(\delta_2)&= \sup_{f \in \tilde{\mathcal{F}}: Pf^2\leq \delta_2^2} (\Prob_n -P)(2\xi f-f^2)-\delta_2^2\\
	&\geq c_1 \vartheta^{\gamma} n^{-\frac{1 }{2-\gamma}} \tau_n(\epsilon,\gamma)- \vartheta^2 n^{-\frac{1 }{2-\gamma}} \geq \frac{1}{2} c_1\vartheta^{\gamma} n^{-\frac{1}{2-\gamma}}\tau_n(\epsilon,\gamma) 
	\end{align*}
	holds for $n$ large enough depending on $\epsilon,\vartheta,\gamma$, completing the proof for the claim for $0< \gamma<1$.
	
	\noindent \textbf{(Case 2: $\gamma =1$).} Recall $\delta_2 =\vartheta n^{-1/2}$, and there exists one interval $I$ with length $\delta_2^2$. It is easy to see that $
	\Prob\big(\abs{\sum_{i=1}^n \bm{1}_I(X_i)-\vartheta^2}>\vartheta^2/2\big)\leq 2\exp(-\vartheta^2/10)$. 
	For $\vartheta \geq 4$, we see that with probability at least $0.5$,
	there are $\mathcal{O}(1)$ points $X_i\in I$. Denote this event $\mathcal{E}_1$. Let
	\begin{align*}
	Z_n\equiv \sup_{f \in \tilde{\mathcal{F}}: Pf^2\leq \delta_2^2 }\bigg(2 \sum_{i=1}^n \xi_i f(X_i)-n\left(\Prob_n-P\right)(f^2)\bigg).
	\end{align*}
	Note we can use the absolute value in the suprema in the above display. Since $
	\E \abs{\left(\Prob_n-P\right)(\bm{1}_I)}^2\leq \vartheta^2 n^{-2}$, we see that on an event with probability at least $0.96$, $\abs{n\left(\Prob_n-P\right)(\bm{1}_I)}\leq 25 \vartheta$. Denote this event by $\mathcal{E}_2$. Then for any $\xi$ such that $\E \abs{\xi}\geq 25\vartheta$, let $t=\E \abs{\xi}-25\vartheta$, and $N_I\equiv \sum_{i=1}^n \bm{1}_I(X_i)$,
	\begin{align*}
	\Prob\left(Z_n\geq t\right)&\geq \E_{\bm{X}} \bigg[ \Prob_{\bm{\xi}} \bigg(\bigg\lvert 2 \sum_{i=1}^n \xi_i \bm{1}_I (X_i)-n\left(\Prob_n-P\right)(\bm{1}_I)\bigg\rvert\geq t \bigg) \bm{1}_{\mathcal{E}_1\cap \mathcal{E}_2}\bigg]\\
	&\geq \E_{\bm{X}}\bigg[\Prob_{\bm{\xi}}\bigg( \biggabs{\sum_{i=1}^{N_I} \xi_i  }> (t+25\vartheta)/2 \bigg)\bm{1}_{\mathcal{E}_1\cap \mathcal{E}_2}\bigg]\\
	&\geq \E_{\bm{X}}\bigg[\Prob_{\bm{\xi}}\bigg( \biggabs{\sum_{i=1}^{N_I} \xi_i  }> \frac{1}{2}\E_{\bm{\xi}} \biggabs{\sum_{i=1}^{N_I} \xi_i  }\bigg)\bm{1}_{\mathcal{E}_1\cap \mathcal{E}_2}\bigg]
	\end{align*}
	where in the last inequality we used Jensen's inequality. Let $\eta$ be a symmetric random variable given by $\Prob(\abs{\eta}>t)=1/(1+t^2)$, then it is easy to calculate that $\E \abs{\eta}=\pi/2$, and $\E\abs{\eta}^r\equiv c_r<\infty$ for $r<2$. Let $\xi\equiv 50\vartheta^2 \cdot \eta$. Then $\E \abs{\xi}=25\pi \vartheta^2>25\vartheta$, and hence choosing $r>1$ close enough to $1$ in the Paley-Zygmund Lemma \ref{lem:paley_zygmund} yields that
	\begin{align*}
	\Prob\left(Z_n\geq 25\pi\vartheta^2-25\vartheta\right) \geq 2\mathfrak{p}_2
	\end{align*}
	for some constant $\mathfrak{p}_2>0$ depending only on $\vartheta$ (through the estimate on $N_I$ on the event $\mathcal{E}_1$). Hence with probability at least $2\mathfrak{p}_2$, 
	\begin{align*}
	F_n(\delta_2) \geq (25\pi\vartheta^2-25\vartheta) n^{-1} - \vartheta^2 n^{-1}\geq 285 \vartheta n^{-1}.
	\end{align*}
	This completes the proof.
\end{proof}

\subsubsection{Proof of claim (2) of Proposition \ref{prop:lower_bound_envelope}} 

\begin{proof}[Proof of Proposition \ref{prop:lower_bound_envelope}, claim (2)]
	Recall $\delta_1 \equiv \rho n^{-\frac{1}{2(2-\gamma)}-\beta_\epsilon}$. Note that by Koltchinskii-Pollard maximal inequality for empirical processes (cf. Theorem 2.14.1 of \cite{van1996weak}), we have 
	\begin{align*}
	\max_{1\leq k\leq n}\E \sup_{f \in \tilde{\mathcal{F}}: Pf^2\leq \delta_1^2} \bigg\lvert\frac{1}{\sqrt{k}}\sum_{i=1}^k \epsilon_i f(X_i) \bigg\rvert \lesssim \pnorm{\tilde{F}(\delta_1)}{L_2(P)}\leq C_1\delta_1^\gamma.
	\end{align*}
	Hence we may take $
	\psi_n(k)\equiv C_1k^{1/(2-2\epsilon)} \delta_1^\gamma$ 
	in the multiplier inequality Lemma \ref{lem:multiplier_ineq} to see that
	\begin{align*}
	\E \sup_{f \in \tilde{\mathcal{F}}: Pf^2\leq \delta_1^2} \bigg\lvert\sum_{i=1}^n \xi_i f(X_i) \bigg\lvert&\leq 4 \int_0^\infty \psi_n\big(n \Prob(\abs{\xi_1}>t)\big)\ \d{t}\\
	&\leq 4C_1 \delta_1^\gamma n^{1/2(1-\epsilon)} \pnorm{\xi_1}{2(1-\epsilon),1}.
	\end{align*}
	On the other hand, again by the Koltchinskii-Pollard maximal inequality and the contraction principle for Rademacher processes, 
	\begin{align*}
	\E \sup_{f \in \tilde{\mathcal{F}}: Pf^2\leq \delta_1^2} \abs{\G_n (f^2)} &\lesssim \E \sup_{f \in \tilde{\mathcal{F}}: Pf^2\leq \delta_1^2} \biggabs{ \frac{1}{\sqrt{n}}\sum_{i=1}^n \epsilon_i f(X_i)}\lesssim \delta_1^\gamma.
	\end{align*}
	Combining the above estimates, we arrive at
	\begin{align*}
	\E E_n(\delta_1) &\leq 8C_1 \delta_1^\gamma n^{-1}\cdot n^{1/2(1-\epsilon)} \pnorm{\xi_1}{2(1-\epsilon),1}+ C_2  n^{-1/2} \delta_1^\gamma\\
	&\leq C_{\epsilon,\xi} \rho^\gamma n^{-\frac{1}{2-\gamma}} \omega_n(\epsilon,\gamma).
	\end{align*}
	The claim (2) of Proposition \ref{prop:lower_bound_envelope} now follows from Markov's inequality and hence the proof of Theorem \ref{thm:envelope_rate_lower_bound} is complete.
\end{proof}

\subsection{Proof of entropy results}

\subsubsection{Proof of Lemma \ref{lem:subset_VC}}

\begin{proof}[Proof of Lemma \ref{lem:subset_VC}]
	Let $t_j\equiv (1+\epsilon)^{-j}$ and $m(\epsilon)$ be the smallest integer $j$ such that $t_j\leq \epsilon \pnorm{F}{L_2(Q)}$. Now for any $f \in \mathcal{F}$, define
	\begin{align*}
	f_\epsilon& \equiv \sum_{j=1}^{m(\epsilon)} \big( t_j \bm{1}_{t_j<f\leq t_{j-1}} + (-t_{j-1}) \bm{1}_{-t_{j-1}< f\leq -t_{j}} \big).
	\end{align*}
	Then if $x \in \mathcal{X}$ is such that 
	\begin{enumerate}
		\item $t_j<f(x)\leq t_{j-1}$ for some $j \leq m(\epsilon)$, 
		\begin{align*}
		0\leq f(x)-f_\epsilon(x)\leq t_{j-1}-t_j\leq \epsilon t_j\leq \epsilon f(x)\leq \epsilon F(x).
		\end{align*}
		\item $-t_{j-1}<f(x)\leq -t_j$ for some $j \leq m(\epsilon)$,
		\begin{align*}
		0\leq f(x)-f_\epsilon(x)\leq -t_j-(-t_{j-1})\leq \epsilon t_j\leq \epsilon \big(-f(x) \big)\leq \epsilon F(x).
		\end{align*}
		\item $-t_{m(\epsilon)}<f(x)\leq t_{m(\epsilon)}$,
		\begin{align*}
		\abs{f(x)-f_\epsilon(x)}\leq t_{m(\epsilon)}\leq \epsilon \pnorm{F}{L_2(Q)}.
		\end{align*}
	\end{enumerate}
	Combining the above discussion we arrive at $
	\pnorm{f-f_\epsilon}{L_2(Q)}^2\leq 3\epsilon^2 \pnorm{F}{L_2(Q)}^2$. Let $\mathcal{F}_\epsilon\equiv \{f_\epsilon: f \in \mathcal{F}\}$. Then since the sets
	\begin{align*}
	\big\{(x,t): f_\epsilon(x)\geq t\big\}&=\cup_{j=1}^{m(\epsilon)} \big\{x: f(x)\geq t_{j}\big\}\times (t_j, t_{j-1}] \\
	&\qquad \bigcup \cup_{j=1}^{m(\epsilon)} \big\{x: f(x)\geq -t_{j-1}\big\}\times (-t_{j-1}, -t_{j}] \\
	&\qquad\qquad \bigcup  \big\{x: f(x)\geq -t_{m(\epsilon)}\big\}\times (-t_{m(\epsilon)}, t_{m(\epsilon)}]
	\end{align*}
	as $f_\epsilon$ ranges over $\mathcal{F}_\epsilon$ is the union of at most $2m(\epsilon)+1$ VC-classes with disjoint supports, and hence the VC-dimension of $\mathcal{F}_\epsilon$ is no larger than $Vm(\epsilon)$, where $V\in (0,\infty)$ only depends on $\mathcal{F}_0$. The rest of the proof proceeds along the same lines as in page 1172 of \cite{gine2006concentration}.
\end{proof}

\subsubsection{Proof of Lemma \ref{lem:entropy_ind}}

\begin{proof}[Proof of Lemma \ref{lem:entropy_ind}]
	Let $\{h_i\}_{i=1}^N$ be a minimal $\epsilon$-covering set of $\mathcal{H}$ under $L_\infty$. For any probability measure $Q$ on $[0,1]^2$, and any $f\otimes (h-g) \in f\otimes \big(\mathcal{H}-g\big)$, take $h_i$ such that $\pnorm{h-h_i}{\infty}\leq \epsilon$. Then
	\begin{align*}
    \pnorm{f\otimes (h_i-g) - f\otimes (h-g)}{L_2(Q)}^2&\leq \pnorm{f}{L_2(Q)}^2 \epsilon^2 = \pnorm{f\otimes 1}{L_2(Q)}^2\epsilon^2.
	\end{align*}
	completing the proof.
\end{proof}

\subsubsection{Proof of Lemma \ref{lem:cond_H2_1}}

\begin{proof}[Proof of Lemma \ref{lem:cond_H2_1}]
	Since $\mathcal{F}'\subset \mathcal{F}\cap L_\infty(1)$ is VC-major, Lemma \ref{lem:subset_VC} yields that for any probability measure $Q_x$ on $[0,1]$ and any $\epsilon>0$,
		\begin{align*}
		\log \mathcal{N}\big(\epsilon \pnorm{F'}{L_2(Q_x)}, \mathcal{F}', L_2(Q_x) \big)\leq \frac{C}{\epsilon}  \log \bigg(\frac{C}{\epsilon}\bigg)  \log\bigg(\frac{1}{\epsilon\pnorm{F'}{L_2(Q_x)} }\bigg).
		\end{align*}
	Now for any discrete probability measure $Q=n^{-1}\sum_{i=1}^n \delta_{(x_i,z_i)}$ on $[0,1]^2$, let $Q_x\equiv n^{-1}\sum_{i=1}^n \delta_{x_i}$ be the (marginal) probability measure on $[0,1]$. Take a minimal $\epsilon\pnorm{F'}{L_2(Q_x)}$-cover of $\mathcal{F}'$ under $L_2(Q_x)$, namely $\{f_k\}$, the log-cardinality of which is no more than 
		\begin{align*}
		\frac{C}{\epsilon}  \log \bigg(\frac{C}{\epsilon}\bigg)  \log\bigg(\frac{1}{\epsilon\pnorm{F'}{L_2(Q_x)} }\bigg).
		\end{align*}
	Further take a minimal $\epsilon$-cover of $\mathcal{H}$ under $L_\infty$, namely $\{h_l\}$, the log-cardinality of which is at most a constant multiple of $\epsilon^{-\gamma}$. Consider the set $\{f_k\otimes h_l\}$, the log-cardinality of which is at most a constant multiple of 
		\begin{align*}
		\frac{1}{\epsilon} \log \bigg(\frac{1}{\epsilon}\bigg)  \log\bigg(\frac{1}{\epsilon\pnorm{F'\otimes 1}{L_2(Q)} }\bigg) \vee \epsilon^{-\gamma}.
		\end{align*}
	For every $f\otimes (h-(\phi_0-f_0)) \in \mathcal{F}'\otimes (\mathcal{H}-(\phi_0-f_0))$, let $\tilde{f}_k,\tilde{h}_l$ be such that $\pnorm{f-\tilde{f}_k}{L_2(Q_x)}\leq \epsilon \pnorm{F'}{L_2(Q_x)}$ and $\pnorm{h-\tilde{h}_l}{\infty}\leq \epsilon$. Then
	\begin{align*}
	&\pnorm{f\otimes (h-(\phi_0-f_0))-\tilde{f}_k\otimes (\tilde{h}_l-(\phi_0-f_0))}{L_2(Q)}^2\\
	&=\frac{1}{n}\sum_{i=1}^n \bigg(f(x_i)\big(h(z_i)-(\phi_0(x_i,z_i)-f_0(x_i))\big)\\
	&\qquad\qquad\qquad-\tilde{f}_k(x_i)\big(\tilde{h}_l(z_i)-(\phi_0(x_i,z_i)-f_0(x_i))\big)\bigg)^2 \\
	&\lesssim \pnorm{h-\tilde{h}_l}{\infty}^2 \pnorm{F'}{L_2(Q_x)}^2+ \pnorm{\phi_0}{\infty}^2\pnorm{f-\tilde{f}_k}{L_2(Q_x)}^2\\
	&\lesssim (1\vee \pnorm{\phi_0}{\infty}^2)\epsilon^2 \pnorm{F'}{L_2(Q_x)}^2 = (1\vee \pnorm{\phi_0}{\infty}^2)\epsilon^2  \pnorm{F'\otimes 1}{L_2(Q)}^2,
	\end{align*}
	as desired.
\end{proof}

\subsection{Proof of stochastic boundedness of shape-restricted LSEs}

\subsubsection{Proof of Lemma \ref{lem:uniform_bound_iso}}

\begin{proof}[Proof of Lemma \ref{lem:uniform_bound_iso}, isotonic case]
	The isotonic least squares estimator $\hat{f}_n$ has a well-known min-max representation \cite{robertson1988order}:
	\begin{align}\label{ineq:represent_iso}
	\hat{f}_n({X}_j)=\min_{v\geq j}\max_{u\leq j}\frac{1}{v-u+1}\sum_{i=u}^v {Y}_i
	\end{align}
	where we slightly abuse the notation $X_i$'s so that ${X}_1\leq \cdots \leq {X}_n$ denote the ordered covariates and ${Y}_i$ denotes the corresponding observed response at ${X}_i$.
	Since $\hat{f}_n$ is non-decreasing, we only need to consider 
	\begin{align*}
	\alpha_n \equiv \hat{f}_n({X}_1) = \min_{v\geq 1} \frac{1}{v}\sum_{i=1}^v{Y}_i,\quad \beta_n \equiv \hat{f}_n({X}_n) = \max_{u\leq n} \frac{1}{n-u+1}\sum_{i=u}^n {Y}_i.
	\end{align*}
	Note that 
	\begin{align*}
	\E \abs{\alpha_n} \vee \E \abs{\beta_n}\leq  \E\max_{k\leq n}\biggabs{\frac{1}{k}\sum_{i=1}^k \xi_i} + \pnorm{f_0}{\infty}.
	\end{align*}
	The first term is $\mathcal{O}(1)$ by a simple blocking argument and a L\'evy-type maximal inequality due to Montgomery-Smith \cite{montgomery1993comparison} (see also Theorem 1.1.5 of \cite{de2012decoupling}); we include some details for the convenience of the reader: suppose without loss of generality that $\log_2 n$ is an integer, then for any $t\geq 1$, 
	\begin{align*}
	\Prob\bigg(\max_{1\leq k\leq n} \biggabs{\frac{1}{k}\sum_{i=1}^k \xi_i}>t\bigg)&\leq \sum_{j=1}^{\log_2 n} \Prob\bigg(\max_{2^{j-1}\leq k<2^j} \biggabs{\frac{1}{k}\sum_{i=1}^k \xi_i}> t\bigg)+\Prob\bigg(\biggabs{\sum_{i=1}^n \xi_i}>n t\bigg)\\
	&\leq \sum_{j=1}^{\log_2 n} \Prob\bigg(\max_{2^{j-1}\leq k<2^j} \biggabs{\sum_{i=1}^k \xi_i}> 2^{j-1} t\bigg)+\frac{\pnorm{\xi_1}{2}^2}{nt^2}\\
	&\leq 9\sum_{j=1}^{\log_2 n} \Prob\bigg( \biggabs{\sum_{i=1}^{2^j} \xi_i}> 2^{j-1} t/30\bigg)+\frac{\pnorm{\xi_1}{2}^2}{nt^2}\\
	&\leq C \pnorm{\xi_1}{2}^2\bigg(\sum_{j=1}^{\log_2 n}\frac{1}{2^j t^2}+\frac{1}{nt^2}\bigg)\leq C'\pnorm{\xi_1}{2}^2 t^{-2},
	\end{align*}
	completing the proof.
\end{proof}

The proof of stochastic boundedness of the convex least squares estimator crucially uses the characterization developed in Lemma 2.6 of \cite{groeneboom2001estimation}. Note that the characterization is purely deterministic.

\begin{lemma}\label{lem:char_cvx_reg}
$\hat{f}_n$ is a convex least squares estimator if and only if for all $j=2,\ldots,n$, 
	\begin{align*}
	\sum_{k=1}^{j-1} R_k(X_{k+1}-X_k) \geq \sum_{k=1}^{j-1} S_k(X_{k+1}-X_k),
	\end{align*}
	with inequality holds if and only if $\hat{f}_n$ has a kink at $X_j$. Here $R_k = \sum_{i=1}^k \hat{f}_n(X_i)$ and $S_k = \sum_{i=1}^k Y_i$, where we abuse the notation $X_i$'s for the ordered covariates such that $X_1\leq \ldots\leq X_n$, and $Y_i$'s are the corresponding observed responses at $X_i$.
\end{lemma}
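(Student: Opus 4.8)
The plan is to read Lemma~\ref{lem:char_cvx_reg} as the Karush--Kuhn--Tucker / convex-cone-projection characterization of the convex LSE, and then to rewrite the resulting optimality conditions in the stated partial-sum form by a summation by parts. With $X_1\le\cdots\le X_n$ the ordered design points and $R_k,S_k$ as in the statement, the objective $\sum_{i=1}^n(Y_i-f(X_i))^2$ depends on $f$ only through $\theta\equiv(f(X_1),\ldots,f(X_n))$, and as $f$ ranges over convex functions on $[0,1]$ this vector ranges over exactly the closed convex cone
$$
\mathcal{C}_n\equiv\Big\{\theta\in\R^n:\ \tfrac{\theta_{k+1}-\theta_k}{X_{k+1}-X_k}\le\tfrac{\theta_{k+2}-\theta_{k+1}}{X_{k+2}-X_{k+1}},\ k=1,\ldots,n-2\Big\}
$$
of ``convex sequences'' (any such sequence extends to a convex function, e.g.\ by piecewise-linear interpolation continued linearly). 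So the canonical convex LSE $\hat f_n$ is precisely the piecewise-linear convex function, with kinks only among the $X_i$, that realizes the Euclidean projection $\hat\theta=\Pi_{\mathcal{C}_n}(Y)$. The first substantive step is to record the facial structure of $\mathcal{C}_n$: it is the Minkowski sum of its two-dimensional lineality space of affine functions $(a+bX_1,\ldots,a+bX_n)$ and the finitely generated cone $\mathrm{cone}\{g_2,\ldots,g_{n-1}\}$, where $g_j\equiv\big((X_j-X_i)_+\big)_{i=1}^n$ is the hinge vector based at $X_j$; indeed any convex $\theta$ has the representation $\theta=(\text{affine})+\sum_{j=2}^{n-1}c_j g_j$ with $c_j\ge0$ equal to the slope increment of $\hat f_n$ at $X_j$, so that $c_j>0$ exactly when $\hat f_n$ has a kink at $X_j$.

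By the standard characterization of projection onto a closed convex cone, $\hat\theta=\Pi_{\mathcal{C}_n}(Y)$ iff $\hat\theta\in\mathcal{C}_n$, $\iprod{Y-\hat\theta}{v}\le0$ for all $v\in\mathcal{C}_n$, and $\iprod{Y-\hat\theta}{\hat\theta}=0$. Testing the middle condition on the generators splits it into the ``degenerate'' identities $\sum_i(Y_i-\hat\theta_i)=0$ and $\sum_i(Y_i-\hat\theta_i)X_i=0$ (orthogonality to the affine lineality) together with $\iprod{Y-\hat\theta}{g_j}\le0$ for $j=2,\ldots,n-1$ (the case $j=n$ being automatic, since $g_n$ is affine on the design points). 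Writing $X_j-X_i=\sum_{k=i}^{j-1}(X_{k+1}-X_k)$ and interchanging the order of summation gives
$$
\iprod{Y-\hat\theta}{g_j}=\sum_{i=1}^{j-1}(Y_i-\hat\theta_i)(X_j-X_i)=\sum_{k=1}^{j-1}(S_k-R_k)(X_{k+1}-X_k),
$$
so $\iprod{Y-\hat\theta}{g_j}\le0$ is exactly $\sum_{k=1}^{j-1}R_k(X_{k+1}-X_k)\ge\sum_{k=1}^{j-1}S_k(X_{k+1}-X_k)$, which is the asserted inequality (and holds with equality at $j=n$). For the kink assertion, substituting $\hat\theta=(\text{affine})+\sum_j c_j g_j$ into the last optimality condition yields $0=\iprod{Y-\hat\theta}{\hat\theta}=\sum_j c_j\iprod{Y-\hat\theta}{g_j}$; each term is a product of $c_j\ge0$ with $\iprod{Y-\hat\theta}{g_j}\le0$, hence all vanish, so a kink at $X_j$ (i.e.\ $c_j>0$) forces equality in the $j$-th inequality.

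The delicate point is the converse — equality forcing a kink — which does not follow from generic complementary slackness and in fact can fail when the residual $Y-\hat\theta$ is degenerate (e.g.\ for affine data). It is obtained from the complementarity between the faces of $\mathcal{C}_n$ and of its polar cone in the Moreau decomposition of $Y$, granting that $Y-\hat\theta$ lies in the relative interior of the relevant polar face; at this point I would invoke the precise form of Lemma~2.6 of \cite{groeneboom2001estimation} rather than reprove it. Accordingly, the main obstacles are (i) pinning down the extreme-ray structure of $\mathcal{C}_n$, i.e.\ that the hinges $g_j$ together with the affine lineality generate it with nothing further, and (ii) the two-sided correspondence between tight inequalities and kinks, the forward direction being the immediate complementary-slackness computation above and the reverse requiring the finer facial analysis of the cone and its polar.
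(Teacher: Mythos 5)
The paper does not actually prove this statement: Lemma~\ref{lem:char_cvx_reg} is quoted verbatim from Lemma~2.6 of \cite{groeneboom2001estimation} and used as a black box in the proof of Lemma~\ref{lem:uniform_bound_iso}, so there is no internal proof to compare your argument against. That said, your cone-projection derivation is correct and is, in essence, how the characterization is established in the cited source. Identifying the feasible set with the polyhedral cone $\mathcal{C}_n$ of convex sequences, decomposing it as the Minkowski sum of the affine lineality space and $\mathrm{cone}\{g_2,\ldots,g_{n-1}\}$ (your left-hinges $g_j=((X_j-X_i)_+)_i$ differ from the more customary right-hinges $((X_i-X_j)_+)_i$ only by an affine vector, so they generate the same cone modulo the lineality and the inner products $\iprod{Y-\hat\theta}{g_j}$ agree), and then reading off the three projection conditions is exactly the right framework. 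The summation-by-parts computation converting $\iprod{Y-\hat\theta}{g_j}\le 0$ into $\sum_{k<j}R_k(X_{k+1}-X_k)\ge\sum_{k<j}S_k(X_{k+1}-X_k)$ is correct, and your complementary-slackness argument cleanly gives the direction ``kink at $X_j\Rightarrow$ equality at $j$.''

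Your hesitation over the converse direction is well founded, and is worth keeping: if $Y$ is itself a convex (say affine) sequence, then $Y-\hat\theta=0$ and equality holds for every $j$ even though $\hat f_n$ need not have a kink at every design point, so ``equality $\Rightarrow$ kink'' is simply false in general. The original Lemma~2.6 of \cite{groeneboom2001estimation} asserts only the one-sided implication (equality holds at the kink points), and that is all the present paper actually uses — in the proof of Lemma~\ref{lem:uniform_bound_iso} the statement is invoked precisely to get equality at $j=\tau_n$ and the inequality at $j=\tau_n-1$. The ``if and only if'' wording in the paper's restatement appears to be a typo/overstatement of the cited lemma (note also that ``with inequality holds'' should read ``with equality''). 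So you should not try to prove the converse; it is not needed and not true as stated. With that caveat understood, your proposal is a complete and correct proof of everything the paper actually relies on.
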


\begin{proof}[Proof of Lemma \ref{lem:uniform_bound_iso}, convex case]
	By symmetry we only consider the behavior of $\hat{f}_n(0)$. Let $\tau_n$ denote the first kink of $\hat{f}_n$ away from $0$. Then it follows from the characterization Lemma \ref{lem:char_cvx_reg} that
	\begin{align*}
	\sum_{k=1}^{\tau_n-2}R_k(X_{k+1}-X_k) & \geq \sum_{k=1}^{\tau_n-2} S_k(X_{k+1}-X_k),\\
	\sum_{k=1}^{\tau_n-1}R_k(X_{k+1}-X_k) & = \sum_{k=1}^{\tau_n-1} S_k(X_{k+1}-X_k).
	\end{align*}
	The above two (in)equalities necessarily entail that 
	\begin{align*}
	R_{\tau_n-1}(X_{\tau_n}-X_{\tau_{n}-1})\leq S_{\tau_n-1}(X_{\tau_n}-X_{\tau_{n}-1}).
	\end{align*}
	 Hence with probability $1$ we have $R_{\tau_n-1}\leq S_{\tau_n-1}$, i.e.
	\begin{align}\label{ineq:upper_bound_cvx_origin_1}
	 \sum_{i=1}^{\tau_n-1} \hat{f}_n(X_i)\leq \sum_{i=1}^{\tau_n-1} Y_i.
	\end{align}
	Since $\hat{f}_n$ is linear on $[0,X_{\tau_n}]$, we can write 
	\begin{align}\label{ineq:upper_bound_cvx_origin_2}
	\hat{f}_n(x) = \bigg(1-\frac{x}{X_{\tau_n}}\bigg) \hat{f}_n(0) + \frac{x}{X_{\tau_n}} \hat{f}_n(X_{\tau_n}).
	\end{align}
	Combining (\ref{ineq:upper_bound_cvx_origin_1}) and (\ref{ineq:upper_bound_cvx_origin_2}) we see that
	\begin{align*}
	\bigg[\sum_{i=1}^{\tau_n-1}\bigg(1-\frac{X_i}{X_{\tau_n}}\bigg)\bigg] \hat{f}_n(0) + \bigg[\sum_{i=1}^{\tau_n-1}\frac{X_i}{X_{\tau_n}}\bigg] \hat{f}_n(X_{\tau_n}) \leq \sum_{i=1}^{\tau_n-1} Y_i,
	\end{align*}
	and hence
	\begin{align}\label{ineq:upper_bound_cvx_origin_3}
	\hat{f}_n(0) \leq \bigg(\frac{1}{1-\beta_{\tau_n}}\bigg)\cdot \frac{\sum_{i=1}^{\tau_n-1}Y_i}{\tau_n-1} + \frac{\beta_{\tau_n}}{1-\beta_{\tau_n}}\bigabs{\inf_{x \in [0,1]} \hat{f}_n(x)},
	\end{align}
	where 
	\begin{align*}
	\beta_k = \bigg(\frac{1}{k-1}\sum_{i=1}^{k-1} X_i\bigg)\cdot \frac{1}{X_{k}}.
	\end{align*}
	By (\ref{ineq:upper_bound_cvx_origin_3}), we need to handle three terms: 
	\begin{enumerate}
	\item[(i)] $(1-\beta_{\tau_n})^{-1}$, 
	\item[(ii)] $\frac{\sum_{i=1}^{\tau_n-1}Y_i}{\tau_n-1}$, and
	\item[(iii)] $\abs{\inf_{x \in [0,1]} \hat{f}_n(x)}$. 
	\end{enumerate}
	We first handle term (i). We claim that for some universal constant $C>0$, it holds that
	\begin{align}\label{ineq:upper_bound_cvx_origin_4}
	\Prob\big(\max_{2\leq k\leq n}(1-\beta_k)^{-1}\geq t\big)\leq C t^{-1}.
	\end{align}
	To see this, note that for each $k \leq n$, conditional on $X_k$, $X_1/X_k,\ldots,X_{k-1}/X_k$ are distributed as the order statistics for $k-1$ uniform random variables on $[0,1]$. Let $U_1,\ldots,U_n$ be an i.i.d. sequence of uniformly distributed random variables on $[0,1]$, and $0\leq U_{(1)}^n\leq \ldots\leq U_{(n)}^n\leq 1$ be their associated order statistics. Then by using a union bound, the probability in (\ref{ineq:upper_bound_cvx_origin_4}) is bounded by
	\begin{align*}
	\sum_{k=2}^n \Prob \bigg(\frac{1}{k-1}\sum_{i=1}^{k-1}\frac{X_i}{X_k}\geq 1-t^{-1}\bigg)\leq \sum_{k=1}^{n-1} \E \bigg[\Prob\bigg(\frac{1}{k}\sum_{j=1}^{k}U_{(j)}^k\geq 1-t^{-1}\bigg)\bigg\lvert X_{k+1}\bigg].
	\end{align*}
	For $t\geq 3$, the probability in the bracket  equals $
	\Prob\big(\sum_{j=1}^k U_j\leq kt^{-1}\big)=\frac{(kt^{-1})^k}{k!}$
	by volume computation: $\abs{\{\sum_{j=1}^k x_j\leq a\}}=a^k/k!$. Now combining the probability estimates we arrive at
	\begin{align*}
	\Prob\big(\max_{2\leq k\leq n}(1-\beta_k)^{-1}\geq t\big)\leq \sum_{k\geq 1} \frac{(kt^{-1})^k}{k!} \leq  \sum_{k\geq 1} \frac{(kt^{-1})^k}{(k/e)^k}\leq Ct^{-1},
	\end{align*}
	proving the claim (\ref{ineq:upper_bound_cvx_origin_4}) for $t\geq 3$. For $t<3$, it suffices to increase $C$.

	The second term (ii) can be handled along the same lines as in the proof for the isotonic model, assuming $\pnorm{f_0}{\infty}<\infty$ and $\pnorm{\xi_1}{2}<\infty$.
	
	Finally we consider the third term (iii) $\abs{\inf_{x \in [0,1]} \hat{f}_n(x)}$. We claim that with probability $1$,
    \begin{align}\label{ineq:upper_bound_cvx_origin_5}
    \limsup_{n \to \infty}\sup_{x \in [1/4,3/4]}\abs{\hat{f}_n(x)}\leq C_{\xi,f_0}.
    \end{align}
    The claim will be verified in the proof of Lemma \ref{lem:uniform_bound_additive_iso} below in a more general setting. In particular, (\ref{ineq:upper_bound_cvx_origin_5}) implies that $\sup_{x \in [1/4,3/4]}\abs{\hat{f}_n(x)}=\mathcal{O}_{\mathbf{P}}(1)$. Hence for any $\epsilon>0$, there exists a constant $K_\epsilon>0$ such that for all $n$ large enough, with probability at least $1-\epsilon$, $\sup_{x \in [1/4,3/4]}\abs{\hat{f}_n(x)}\leq K_\epsilon$. This event is denoted $\mathcal{E}_\epsilon$. Now by convexity of $\hat{f}_n$, it follows that $\abs{\inf_{x \in [0,1]} \hat{f}_n(x)}\leq 2K_\epsilon$ on $\mathcal{E}_\epsilon$. To see this, we only need to consider the case where the minimum of $\hat{f}_n$ is attained in, say, $[0,1/4]$: then the line connecting $(1/4,\hat{f}_n(1/4))$ and $(3/4,\hat{f}_n(3/4))$ minorizes $\hat{f}_n$ on $[0,1/4]$, which is bounded from below by $-2K_\epsilon$ and hence the same lower bound holds for $\inf_{x \in [0,1]}\hat{f}_n(x)$ on the event $\mathcal{E}_\epsilon$. An upper bound for $\inf_{x \in [0,1]}\hat{f}_n(x)$ is trivial: $\inf_{x \in [0,1]}\hat{f}_n(x)\leq \sup_{x \in [1/4,3/4]}\hat{f}_n(x)\leq K_\epsilon$ on $\mathcal{E}_\epsilon$. These arguments complete the proof for $\abs{\inf_{x \in [0,1]} \hat{f}_n(x)}=\mathcal{O}_{\mathbf{P}}(1)$.

	The claim that $\pnorm{\hat{f}_n}{\infty}=\mathcal{O}_{\mathbf{P}}(1)$ follows by combining the discussion of the three terms above and (\ref{ineq:upper_bound_cvx_origin_3}) which proved $\abs{\hat{f}_n(0)}\vee \abs{\hat{f}_n(1)}=\mathcal{O}_{\mathbf{P}}(1)$ and $\abs{\inf_{x \in [0,1]} \hat{f}_n(x)}=\mathcal{O}_{\mathbf{P}}(1)$.
\end{proof}

\subsubsection{Proof of Lemma \ref{lem:uniform_bound_additive_iso}}

\begin{proof}[Proof of Lemma \ref{lem:uniform_bound_additive_iso}, isotonic case]
	The proof essentially follows the isotonic case of Lemma \ref{lem:uniform_bound_iso} by noting that the least squares estimator $\hat{f}_n$ for $\mathcal{F}$ in the additive model  has the following representation:
	\begin{align*}
	\hat{f}_n(X_j)=\min_{v\geq j}\max_{u\leq j}\frac{1}{v-u+1}\sum_{i=u}^v \big({Y}_i-\hat{h}_n({Z}_i)\big)
	\end{align*}
	where ${X}_1\leq \cdots \leq  {X}_n$ denote the ordered $X_i$'s, ${Y}_i$'s are the observed responses at the corresponding ${X}_i$'s, and ${Z}_i$'s are the corresponding $Z_i$'s following the ordering of the ${X}_i$'s. The rest of the proof proceeds along the same lines as in the isotonic case of Lemma \ref{lem:uniform_bound_iso} by noting that
	\begin{align}\label{ineq:uniform_bound_additive_isotonic_1}
	&\max_{1\leq k\leq n} \sup_{h \in \mathcal{H}} \biggabs{\frac{1}{k}\sum_{i=1}^k \big(\phi_0(X_i, Z_i)-h(Z_i)}\\
	&\leq 	\pnorm{\phi_0}{\infty}+ \max_{1\leq k\leq n} \bigg(\frac{1}{k}\sum_{i=1}^k H(Z_i)\bigg)=\mathcal{O}_{\mathbf{P}}(1),\nonumber
	\end{align}
	where the stochastic boundedness follows from the same arguments using L\'evy-type maximal inequality as in the isotonic case of Lemma \ref{lem:uniform_bound_iso}, since we have assumed $P_Z H^2<\infty$.
\end{proof}

\begin{proof}[Proof of Lemma \ref{lem:uniform_bound_additive_iso}, convex case]
We use the same strategy as the convex case of Lemma \ref{lem:uniform_bound_iso}  by replacing $Y_i$ with $Y_i-\hat{h}_n(Z_i)$, and handling terms (i), (ii) and (iii) as in the proof of the convex case of Lemma \ref{lem:uniform_bound_iso} . Term (i) can be handled using the same arguments as in the proof of the convex case of Lemma \ref{lem:uniform_bound_iso} ; term (ii) can be handled similar to (\ref{ineq:uniform_bound_additive_isotonic_1}). Hence it remains to handle (iii). Let $\hat{\phi}_n(x,z)\equiv \hat{f}_n(x)+\hat{h}_n(z)$. We claim that there exists some $M>0$ such that
\begin{align}\label{ineq:uniform_bound_additive_convex_1}
\Prob\bigg( \inf_{(x,z) \in [1/4,3/4]^2} \abs{\hat{\phi}_n(x,z)-\phi_0(x,z)}>M \textrm{ i.o.}\bigg)=0.
\end{align}
Once (\ref{ineq:uniform_bound_additive_convex_1}) is proved, the event $\mathcal{E}\equiv \cup_{m\geq 1}\cap_{n\geq m} \{ \inf_{x \in [1/4,3/4]} \abs{\hat{f}_n(x)}\leq \bar{M}\}$ happens with probability $1$, where $\bar{M}\equiv M+\sup_{(x,z)\in[1/4,3/4]^2} H(z)+\pnorm{\phi_0}{\infty}<\infty$. Let $x_n \in \argmin_{x \in [1/4,3/4]} \hat{f}_n(x)$ and $M_n\equiv \abs{\hat{f}_n(x_n)}$. On the event $\mathcal{E}$, for all $n$ large enough, there exists $x^\ast_{n} \in [1/4,3/4]$ such that $\abs{\hat{f}_{n}(x^\ast_{n})}\leq 2\bar{M}$. The key observation is the following: if $M_n>10\bar{M}$, then 
\begin{align}\label{ineq:uniform_bound_additive_convex_8}
\inf_{x \in [1/16,1/8]} \hat{f}_{n}(x) \vee \inf_{x \in [7/8,15/16]}  \hat{f}_{n}(x)
&\geq \frac{1}{4}\big(M_n-10\bar{M}\big).
\end{align}
To see this, we only consider the case $1/4\leq x_n< x_n^\ast\leq 3/4$, and derive a lower bound for $\inf_{x \in [7/8,15/16]} \hat{f}_{n}(x)$; the other case follows from similar arguments. Note that the line $L$ connecting $(x_n, \hat{f}_n(x_n))$ and $(x_n^\ast, \hat{f}_n(x_n^\ast))$  minorizes $\hat{f}_n$ on $[7/8,15/16]$. Since $M_n>10\bar{M}>2\bar{M}$, $\hat{f}_n(x_n)<0$ and hence the line $L$ has a positive slope $s_L$ bounded below by $(M_n-2\bar{M})/(3/4-1/4)=2(M_n-2\bar{M})$. This implies that for any $x \in [7/8, 15/16]$,
\begin{align*}
\hat{f}_n(x)\geq \hat{f}_n(7/8)\geq L(7/8)&=L(x_n^\ast)+s_L(7/8-x_n^\ast)\\
&\geq  \hat{f}_n(x_n^\ast) + 2(M_n-2\bar{M})\cdot (7/8-3/4)\\
&\geq (-2\bar{M})+\frac{1}{4}(M_n-2\bar{M})=\frac{1}{4}\big(M_n-10\bar{M}\big),
\end{align*}
proving (\ref{ineq:uniform_bound_additive_convex_8}). Now we assume without loss of generality that $\inf_{x \in [1/16,1/8]} \hat{f}_{n}(x) \geq (M_n-10\bar{M})/4$. Let $I\equiv [1/16,1/8]\times [0,1]$. Since
\begin{align*}
&\frac{1}{n}\sum_{i=1}^{n} \big(Y_i-\hat{\phi}_{n}(X_i,Z_i)\big)^2\\
& = \frac{1}{n} \sum_{i=1}^{n} \big(\xi_i+\phi_0(X_i,Z_i)-\hat{h}_{n}(Z_i)-\hat{f}_{n}(X_i)\big)^2\\
&\geq \frac{1}{n}\sum_{(X_i,Z_i) \in I}\big(\hat{f}_{n}(X_i)-\big(H(Z_i)+\pnorm{\phi_0}{\infty}+\abs{\xi_i}\big)\big)_+^2\\
&\geq \frac{1}{2n}\sum_{(X_i,Z_i) \in I } \hat{f}_{n}^2(X_i) - \frac{1}{n} \sum_{(X_i,Z_i) \in I} \big(3H^2(Z_i)+3\pnorm{\phi_0}{\infty}^2+3\xi_i^2\big)\\
& \geq \bigg(\frac{(M_n-10\bar{M})^2 }{32} -3\pnorm{\phi_0}{\infty}^2 \bigg)\frac{\abs{\{ i \in [1:n]: (X_i,Z_i) \in I\}}}{n}\\
&\qquad\qquad\qquad\qquad - \frac{3}{n}\sum_{(X_i,Z_i) \in I} H^2(Z_i)- \frac{3}{n}\sum_{(X_i,Z_i) \in I} \xi_i^2.
\end{align*}
Hence by the law of large numbers, on an event with probability $1$, if $M_n>10\bar{M}$,
\begin{align}\label{ineq:uniform_bound_additive_convex_2}
& \limsup_{n \to \infty} \frac{1}{n}\sum_{i=1}^{n} \big(Y_i-\hat{\phi}_{n}(X_i,Z_i)\big)^2\\
&\qquad \qquad \geq \frac{ (\limsup_{n \to \infty} M_n-10\bar{M})^2 }{16\cdot 32} -\frac{3}{16}\big(\pnorm{\phi_0}{\infty}^2+P_Z H^2 + \E \xi_1^2\big).\nonumber
\end{align}
On the other hand, since $\hat{\phi}_{n}$ is the least squares estimator, for any $h' \in \mathcal{H}$,
\begin{align}\label{ineq:uniform_bound_additive_convex_3}
&\limsup_{n \to \infty} \frac{1}{n}\sum_{i=1}^{n} \big(Y_i-\hat{\phi}_{n}(X_i,Z_i)\big)^2 \\
&\leq \limsup_{n \to \infty} \frac{1}{n}\sum_{i=1}^{n} \big(Y_i-h'(Z_i)\big)^2\leq 3\E \xi_1^2+3\pnorm{\phi_0}{\infty}^2+3P_Z H^2.\nonumber
\end{align}
Combining (\ref{ineq:uniform_bound_additive_convex_2}) and (\ref{ineq:uniform_bound_additive_convex_3}), it follows that on an event with probability $1$, 
\begin{align*}
\limsup_{n \to \infty} M_n\leq C\big(\pnorm{\xi_1}{2}+\pnorm{H}{L_2(P_Z)}+\pnorm{\phi_0}{\infty}+\bar{M}\big),
\end{align*}
holds for some absolute constant $C>0$, thus proving that with probability $1$, 
\begin{align*}
\limsup_{n \to \infty}\bigabs{\inf_{x \in [1/4,3/4]} \hat{f}_n(x)}\leq C_{\xi, H,\phi_0,M}.
\end{align*}
That 
\begin{align*}
\limsup_{n \to \infty}\bigabs{\sup_{x \in [1/4,3/4]} \hat{f}_n(x)}\leq C'_{\xi, H,\phi_0,M}
\end{align*}
with probability $1$ can be proved in a completely similar manner by noting that the supremum of $\hat{f}_n$ over $[1/4,3/4]$ is taken either at $1/4$ or $3/4$. These claims show that with probability $1$, 
\begin{align*}
\limsup_{n \to \infty}\sup_{x \in [1/4,3/4]}\abs{\hat{f}_n(x)}\leq C''_{\xi,H,\phi_0,M}.
\end{align*}

Note that we have also verified the announced claim (\ref{ineq:upper_bound_cvx_origin_5}) in the convex case of Lemma \ref{lem:uniform_bound_iso} by taking $\phi_0(x,z)\equiv f_0(x)$ and $\mathcal{H}\equiv \{0\}$. The rest of proof for handling term (iii) proceeds along the same lines as in the proof of the convex case of Lemma \ref{lem:uniform_bound_iso}, modulo the unproved claim (\ref{ineq:uniform_bound_additive_convex_1}). Below we prove that (\ref{ineq:uniform_bound_additive_convex_1}) holds for $M>\sqrt{32\big(\pnorm{\xi_1}{2}^2+\pnorm{\phi_0}{\infty}^2+P_ZH^2\big)}$. To this end, first we prove
\begin{align}\label{ineq:uniform_bound_additive_convex_5}
\Prob\bigg(\mathcal{E}_1\equiv  \big\{ \inf_{(x,z) \in [1/4,3/4]^2} \big(\hat{\phi}_n(x,z)-\phi_0(x,z)\big)>M\textrm{ i.o.} \big\}\bigg)=0.
\end{align}
On the event $\mathcal{E}_1$ intersecting a probability-one event, there exists a subsequence $\{n_k\}_{k\geq 1}$ such that  
\begin{align}\label{ineq:uniform_bound_additive_convex_6}
&\liminf_{k \to \infty}\frac{1}{n_k}\sum_{i=1}^{n_k}\big(Y_i-\hat{\phi}_{n_k}(X_i,Z_i)\big)^2\\
&\geq \liminf_{k \to \infty}\frac{1}{2n_k}\sum_{(X_i,Z_i) \in [1/4,3/4]^2}\big(\phi_0-\hat{\phi}_{n_k}\big)^2(X_i,Z_i)-\lim_{k \to \infty} \frac{1}{n_k}\sum_{i=1}^{n_k} \xi_i^2\nonumber\\
& \geq M^2/8 - \E \xi_1^2,\nonumber
\end{align}
and thus by  (\ref{ineq:uniform_bound_additive_convex_3}), $M^2\leq 32\big(\pnorm{\xi_1}{2}^2+\pnorm{\phi_0}{\infty}^2+P_ZH^2\big)$. Hence $\mathcal{E}_1$ must be a probability-zero event, which proves (\ref{ineq:uniform_bound_additive_convex_5}). Using the same arguments we can prove
\begin{align}\label{ineq:uniform_bound_additive_convex_7}
\Prob\bigg(\sup_{(x,z) \in [1/4,3/4]^2} \big(\hat{\phi}_n(x,z)-\phi_0(x,z)\big)<-M \textrm{ i.o.} \bigg)=0.
\end{align}
The claim (\ref{ineq:uniform_bound_additive_convex_1}) now follows from (\ref{ineq:uniform_bound_additive_convex_5}) and (\ref{ineq:uniform_bound_additive_convex_7}). This completes the proof.
\end{proof}

\section*{Acknowledgements}
We thank Tengyao Wang for his generous help in the proof of Lemma \ref{lem:uniform_bound_iso}. 

\bibliographystyle{abbrv}
\bibliography{mybib}

\begin{thebibliography}{10}

\bibitem{alexander1985rates}
K.~S. Alexander.
\newblock Rates of growth for weighted empirical processes.
\newblock In {\em Proceedings of the {B}erkeley conference in honor of {J}erzy
  {N}eyman and {J}ack {K}iefer, {V}ol.\ {II} ({B}erkeley, {C}alif., 1983)},
  Wadsworth Statist./Probab. Ser., pages 475--493. Wadsworth, Belmont, CA,
  1985.

\bibitem{alexander1987central}
K.~S. Alexander.
\newblock The central limit theorem for weighted empirical processes indexed by
  sets.
\newblock {\em J. Multivariate Anal.}, 22(2):313--339, 1987.

\bibitem{alexander1987rates}
K.~S. Alexander.
\newblock Rates of growth and sample moduli for weighted empirical processes
  indexed by sets.
\newblock {\em Probab. Theory Related Fields}, 75(3):379--423, 1987.

\bibitem{balazs2015near}
G.~Bal{\'a}zs, A.~Gy{\"o}rgy, and C.~Szepesv{\'a}ri.
\newblock Near-optimal max-affine estimators for convex regression.
\newblock In {\em Proceedings of the Eighteenth International Conference on
  Artificial Intelligence and Statistics}, pages 56--64, 2015.

\bibitem{barlett2005local}
P.~L. Bartlett, O.~Bousquet, and S.~Mendelson.
\newblock Local {R}ademacher complexities.
\newblock {\em Ann. Statist.}, 33(4):1497--1537, 2005.

\bibitem{bartlett2006empirical}
P.~L. Bartlett and S.~Mendelson.
\newblock Empirical minimization.
\newblock {\em Probab. Theory Related Fields}, 135(3):311--334, 2006.

\bibitem{bellec2018sharp}
P.~C. Bellec.
\newblock Sharp oracle inequalities for {L}east {S}quares estimators in shape
  restricted regression.
\newblock {\em Ann. Statist.}, 46(2):745--780, 2018.

\bibitem{bickel1998efficient}
P.~J. Bickel, C.~A.~J. Klaassen, Y.~Ritov, and J.~A. Wellner.
\newblock {\em Efficient and Adaptive Estimation for Semiparametric Models}.
\newblock Springer-Verlag, New York, 1998.
\newblock Reprint of the 1993 original.

\bibitem{birge1993rates}
L.~Birg{\'e} and P.~Massart.
\newblock Rates of convergence for minimum contrast estimators.
\newblock {\em Probab. Theory Related Fields}, 97(1-2):113--150, 1993.

\bibitem{boucheron2013concentration}
S.~Boucheron, G.~Lugosi, and P.~Massart.
\newblock {\em Concentration inequalities}.
\newblock Oxford University Press, Oxford, 2013.
\newblock A nonasymptotic theory of independence, With a foreword by Michel
  Ledoux.

\bibitem{chatterjee2014new}
S.~Chatterjee.
\newblock A new perspective on least squares under convex constraint.
\newblock {\em Ann. Statist.}, 42(6):2340--2381, 2014.

\bibitem{chatterjee2015risk}
S.~Chatterjee, A.~Guntuboyina, and B.~Sen.
\newblock On risk bounds in isotonic and other shape restricted regression
  problems.
\newblock {\em Ann. Statist.}, 43(4):1774--1800, 2015.

\bibitem{chatterjee2015adaptive}
S.~Chatterjee and J.~Lafferty.
\newblock Adaptive risk bounds in unimodal regression.
\newblock {\em arXiv preprint arXiv:1512.02956}, 2015.

\bibitem{chen2016generalized}
Y.~Chen and R.~J. Samworth.
\newblock Generalized additive and index models with shape constraints.
\newblock {\em J. R. Stat. Soc. Ser. B. Stat. Methodol.}, 78(4):729--754, 2016.

\bibitem{cheng2009semiparametric}
G.~Cheng.
\newblock Semiparametric additive isotonic regression.
\newblock {\em J. Statist. Plann. Inference}, 139(6):1980--1991, 2009.

\bibitem{de2012decoupling}
V.~H. de~la Pe{\~n}a and E.~Gin{\'e}.
\newblock {\em Decoupling}.
\newblock Probability and its Applications (New York). Springer-Verlag, New
  York, 1999.
\newblock From dependence to independence, Randomly stopped processes.
  $U$-statistics and processes. Martingales and beyond.

\bibitem{gao2017minimax}
C.~Gao, F.~Han, and C.-H. Zhang.
\newblock Minimax risk bounds for piecewise constant models.
\newblock {\em arXiv preprint arXiv:1705.06386}, 2017.

\bibitem{gine2006concentration}
E.~Gin{\'e} and V.~Koltchinskii.
\newblock Concentration inequalities and asymptotic results for ratio type
  empirical processes.
\newblock {\em Ann. Probab.}, 34(3):1143--1216, 2006.

\bibitem{gine2000exponential}
E.~Gin{\'e}, R.~Lata{\l}a, and J.~Zinn.
\newblock Exponential and moment inequalities for {$U$}-statistics.
\newblock In {\em High dimensional probability, {II} ({S}eattle, {WA}, 1999)},
  volume~47 of {\em Progr. Probab.}, pages 13--38. Birkh\"auser Boston, Boston,
  MA, 2000.

\bibitem{gine1983central}
E.~Gin\'e and J.~Zinn.
\newblock Central limit theorems and weak laws of large numbers in certain
  {B}anach spaces.
\newblock {\em Z. Wahrsch. Verw. Gebiete}, 62(3):323--354, 1983.

\bibitem{groeneboom2001estimation}
P.~Groeneboom, G.~Jongbloed, and J.~A. Wellner.
\newblock Estimation of a convex function: characterizations and asymptotic
  theory.
\newblock {\em Ann. Statist.}, 29(6):1653--1698, 2001.

\bibitem{guntuboyina2013global}
A.~Guntuboyina and B.~Sen.
\newblock Global risk bounds and adaptation in univariate convex regression.
\newblock {\em Probab. Theory Related Fields}, 163(1-2):379--411, 2015.

\bibitem{guntuboyina2017nonparametric}
A.~Guntuboyina and B.~Sen.
\newblock Nonparametric shape-restricted regression.
\newblock {\em arXiv preprint arXiv:1709.05707}, 2017.

\bibitem{han2017isotonic}
Q.~Han, T.~Wang, S.~Chatterjee, and R.~J. Samworth.
\newblock Isotonic regression in general dimensions.
\newblock {\em arXiv preprint arXiv:1708.09468}, 2017.

\bibitem{han2017sharp}
Q.~Han and J.~A. Wellner.
\newblock A sharp multiplier inequality with applications to heavy-tailed
  regression problems.
\newblock {\em arXiv preprint arXiv:1706.02410}, 2017.

\bibitem{hastie1990generalized}
T.~J. Hastie and R.~J. Tibshirani.
\newblock {\em Generalized additive models}, volume~43 of {\em Monographs on
  Statistics and Applied Probability}.
\newblock Chapman and Hall, Ltd., London, 1990.

\bibitem{koltchinskii2006local}
V.~Koltchinskii.
\newblock Local {R}ademacher complexities and oracle inequalities in risk
  minimization.
\newblock {\em Ann. Statist.}, 34(6):2593--2656, 2006.

\bibitem{koltchinskii2000rademacher}
V.~Koltchinskii and D.~Panchenko.
\newblock Rademacher processes and bounding the risk of function learning.
\newblock In {\em High dimensional probability, {II} ({S}eattle, {WA}, 1999)},
  volume~47 of {\em Progr. Probab.}, pages 443--457. Birkh\"auser Boston,
  Boston, MA, 2000.

\bibitem{ledoux2013probability}
M.~Ledoux and M.~Talagrand.
\newblock {\em Probability in {B}anach {S}paces}.
\newblock Classics in Mathematics. Springer-Verlag, Berlin, 2011.
\newblock Isoperimetry and processes, Reprint of the 1991 edition.

\bibitem{mammen1999existence}
E.~Mammen, O.~Linton, and J.~Nielsen.
\newblock The existence and asymptotic properties of a backfitting projection
  algorithm under weak conditions.
\newblock {\em Ann. Statist.}, 27(5):1443--1490, 1999.

\bibitem{mammen2007additive}
E.~Mammen and K.~Yu.
\newblock Additive isotone regression.
\newblock In {\em Asymptotics: particles, processes and inverse problems},
  volume~55 of {\em IMS Lecture Notes Monogr. Ser.}, pages 179--195. Inst.
  Math. Statist., Beachwood, OH, 2007.

\bibitem{massart2000constants}
P.~Massart.
\newblock About the constants in {T}alagrand's concentration inequalities for
  empirical processes.
\newblock {\em Ann. Probab.}, 28(2):863--884, 2000.

\bibitem{massart2006risk}
P.~Massart and E.~N\'ed\'elec.
\newblock Risk bounds for statistical learning.
\newblock {\em Ann. Statist.}, 34(5):2326--2366, 2006.

\bibitem{meyer2013semi}
M.~C. Meyer.
\newblock Semi-parametric additive constrained regression.
\newblock {\em J. Nonparametr. Stat.}, 25(3):715--730, 2013.

\bibitem{montgomery1993comparison}
S.~J. Montgomery-Smith.
\newblock Comparison of sums of independent identically distributed random
  vectors.
\newblock {\em Probab. Math. Statist.}, 14(2):281--285 (1994), 1993.

\bibitem{robertson1988order}
T.~Robertson, F.~T. Wright, and R.~L. Dykstra.
\newblock {\em Order restricted statistical inference}.
\newblock Wiley Series in Probability and Mathematical Statistics: Probability
  and Mathematical Statistics. John Wiley \& Sons, Ltd., Chichester, 1988.

\bibitem{Samorodnitsky1994stable}
G.~Samorodnitsky and M.~S. Taqqu.
\newblock {\em Stable non-{G}aussian random processes}.
\newblock Stochastic Modeling. Chapman \& Hall, New York, 1994.
\newblock Stochastic models with infinite variance.

\bibitem{stone1985additive}
C.~J. Stone.
\newblock Additive regression and other nonparametric models.
\newblock {\em Ann. Statist.}, 13(2):689--705, 1985.

\bibitem{talagrand1996new}
M.~Talagrand.
\newblock New concentration inequalities in product spaces.
\newblock {\em Invent. Math.}, 126(3):505--563, 1996.

\bibitem{van1990estimating}
S.~van~de Geer.
\newblock Estimating a regression function.
\newblock {\em Ann. Statist.}, 18(2):907--924, 1990.

\bibitem{van2014uniform}
S.~van~de Geer.
\newblock On the uniform convergence of empirical norms and inner products,
  with application to causal inference.
\newblock {\em Electron. J. Stat.}, 8(1):543--574, 2014.

\bibitem{van2015penalized}
S.~van~de Geer and A.~Muro.
\newblock Penalized least squares estimation in the additive model with
  different smoothness for the components.
\newblock {\em J. Statist. Plann. Inference}, 162:43--61, 2015.

\bibitem{van2000empirical}
S.~A. van~de Geer.
\newblock {\em Applications of Empirical Process Theory}, volume~6 of {\em
  Cambridge Series in Statistical and Probabilistic Mathematics}.
\newblock Cambridge University Press, Cambridge, 2000.

\bibitem{van2015concentration}
S.~A. van~de Geer and M.~Wainwright.
\newblock On concentration for (regularized) empirical risk minimization.
\newblock {\em arXiv preprint arXiv:1512.00677}, 2015.

\bibitem{van1996weak}
A.~W. van~der Vaart and J.~A. Wellner.
\newblock {\em Weak {C}onvergence and {E}mpirical {P}rocesses}.
\newblock Springer Series in Statistics. Springer-Verlag, New York, 1996.

\bibitem{wahba1990spline}
G.~Wahba.
\newblock {\em Spline models for observational data}, volume~59 of {\em
  CBMS-NSF Regional Conference Series in Applied Mathematics}.
\newblock Society for Industrial and Applied Mathematics (SIAM), Philadelphia,
  PA, 1990.

\bibitem{yang1999information}
Y.~Yang and A.~Barron.
\newblock Information-theoretic determination of minimax rates of convergence.
\newblock {\em Ann. Statist.}, 27(5):1564--1599, 1999.

\bibitem{zhang2002risk}
C.-H. Zhang.
\newblock Risk bounds in isotonic regression.
\newblock {\em Ann. Statist.}, 30(2):528--555, 2002.

\end{thebibliography}

\end{document}